\documentclass[11pt]{article}

\usepackage{amsmath}
\usepackage{amsfonts}
\usepackage{graphicx, subfigure}
\usepackage[pdftex]{hyperref}
\usepackage[dutch,english]{babel}
\usepackage{amsmath,amssymb}
\usepackage{amsthm}
\usepackage[usenames,dvipsnames]{color}
\usepackage{tikz}
\usepackage{accents}
\usepackage{xcolor}
\usepackage{hyperref}
\usepackage{mathtools}
\usepackage{float}
\usepackage{multicol}
\usepackage{comment}
\usepackage[T1]{fontenc}
\usepackage{ stmaryrd }
\usepackage[font={small,it}]{caption}
\usetikzlibrary{decorations.pathmorphing}
\usepackage{fullpage}\usepackage{graphicx}

\DeclareMathOperator{\id}{Id}

\DeclareMathOperator{\im}{Im}

\DeclareMathOperator{\Id}{Id}

\definecolor{bloesh}{RGB}{160, 194, 185}
\definecolor{roo}{RGB}{241,23,70}

\newfont{\gothic}{eufm10}
 
\def\x{{\bf x}}

\def\z{{\bf z}}


\raggedbottom

\theoremstyle{plain}
\newtheorem{thr}{Theorem}[section]

\newtheorem{lem}[thr]{Lemma}
\newtheorem{cor}[thr]{Corollary}
\newtheorem{prop}[thr]{Proposition}
\theoremstyle{definition}
\newtheorem{defi}[thr]{Definition}
\newtheorem{definition}[thr]{Definition}
\newtheorem{ex}[thr]{Example}
\theoremstyle{remark}
\newtheorem{remk}[thr]{Remark}
\newtheorem{rem}[thr]{Remark}
\newtheorem{remark}[thr]{Remark}
\theoremstyle{remark}

\newcommand{\field}[1]{\mathbb{#1}}
\newcommand{\R}{\field{R}}

\newcommand{\Z}{\field{Z}}

\definecolor{bloesh}{RGB}{160, 194, 185}
\definecolor{roo}{RGB}{241,23,70}

\usepackage[framemethod=TikZ]{mdframed}
\mdfdefinestyle{MyFrame}{%
    linecolor=orange,
    outerlinewidth=1pt,
    roundcorner=3pt,
    innertopmargin=.2\baselineskip,
    innerbottommargin=.2\baselineskip,
    innerrightmargin=20pt,
    innerleftmargin=20pt,
    backgroundcolor=blue!80!white}

\newcommand{\gup}[1]{G^\uparrow_{#1}}
\newcommand{\gdown}[1]{G^\downarrow_{#1}}

\newcommand{\0}{{\bf 0}}

\newcommand{\lup}[1]{L^\uparrow_{#1}}
\newcommand{\ldown}[1]{L^\downarrow_{#1}}

\newcommand{\bm}[1]{B_{#1}}
\newcommand{\bt}[1]{B_{#1}^\intercal}
\newcommand{\fup}[1]{F^{\uparrow}_{#1}}
\newcommand{\fdown}[1]{F^{\downarrow}_{#1}}
\newcommand{\cw}[2]{\mathfrak{C}_{#1}^{#2}}

\newcommand{\ccup}[1]{K^{\uparrow}_{#1}}
\newcommand{\ccdown}[1]{K^{\downarrow}_{#1}}
\newcommand{\ccupv}[1]{\bar{K}^{\uparrow}_{#1}}
\newcommand{\ccdownv}[1]{\bar{K}^{\downarrow}_{#1}}

\newcommand{\mc}[1]{\mathcal{#1}}
\renewcommand{\phi}{\varphi}

\title{Dynamical systems defined on simplicial complexes:  symmetries, conjugacies, and invariant subspaces}
\date{\today}
\author{Eddie Nijholt\footnote{ICMC S\~{a}o Carlos, Universidade de S\~{a}o Paulo, Av. Trab. S\~{a}o Carlense 400, S\~{a}o Carlos - SP, 13566-590, Brasil} and Lee DeVille\footnote{Department of Mathematics, University of Illinois, 1409 W. Green St., Urbana IL 61801 USA}}

\DeclareMathOperator{\sgn}{sgn}

\begin{document}

\maketitle

\begin{abstract}
  We consider the general model for dynamical systems defined on a simplicial complex.  We  describe the conjugacy classes of these systems and show how symmetries in a given simplicial complex manifest in the dynamics defined thereon, especially with regard to invariant subspaces in the dynamics.

\end{abstract}

Lead Paragraph:  In this paper we study the general form of a nonlinear dynamical system defined on a simplicial complex, which is a natural higher-order generalization of dynamics on a network.  We first show that many of the important algebraic homological structures survive when we consider nonlinear systems.  We give a full description of the conjugacy class of dynamical systems that can be defined on a simplicial complex, and also give a process by which one can construct a system on a given complex that is equivalent to any given dynamical system.  We show that the choice of orientation matters for simplicial complexes and understand the impact of changing orientations, and finally we study the (dynamical) symmetries of dynamical systems defined on simplicial complexes that have their own (algebraic) symmetries.

\section{Introduction}

\subsection{Background}

The field of network science has grown significantly over the last few decades~\cite{newman2006structure, boccaletti2006complex, barabasi2016network, newman2018networks}, but more recently, in just the last few years, there has been significant activity in both the modeling and empirical communities that  we should not just consider pairwise interactions, but  we should also pay attention to interactions of larger subsets of elements~\cite{salnikov2018simplicial, mulder2018network, lambiotte2019networks, torres2020and, serrano2020simplicial, giusti2016two, bassett2017network, curto2017can, sizemore2018cliques, kanari2018topological, klamt2009hypergraphs, ladroue2009beyond, roman2015simplicial, courtney2016generalized, patania2017shape, iacopini2019simplicial, muhammad2006control, stolz2017persistent, benson2018simplicial, barbarossa2020topological, aguiar2022network}. For  very nice and comprehensive surveys of the state of the art, see~\cite{battiston2020networks, porter2020nonlinearity+, bianconi2021higher}.  Another way of saying this is that we should not just consider dynamics on graphs but also on hypergraphs~\cite{berge1984hypergraphs, bollobas1986combinatorics}.  A special subclass of hypergraphs is the {\em simplicial complexes}; these are hypergraphs with the property that if a subset is active, then all of its subsets are also active.  These are the structures we discuss in this paper.  The study of simplicial complexes has a long history in mathematics~\cite{Munkres.book, Hatcher.book, lim2020hodge} and underpins many modern tools in algebraic topology and homological algebra.

In some form or another, the main question from the field of network dynamical systems asks how a network structure influences dynamical behavior. Some of the most successful approaches consist of identifying some combinatorial or algebraic structure in a given network, and translating this structure into necessary geometric features of the dynamics. Notable in this respect is the groupoid formalism of Golubitsky, Stewart et al \cite{stewart2003symmetry, golubitsky2005patterns, golubitsky2006nonlinear}, which allows one to predict robust synchrony patterns, among other dynamical properties. Another clear example is the translation of a symmetry in the network graph into a symmetry of the corresponding dynamical system \cite{golubitsky1999symmetry, dias2006local}. Other methods use graph-fibrations \cite{deville2015modular, nijholt2016graph} and generalized or hidden symmetries \cite{antoneli2008hopf, nijholt2019center, nijholt2020quiver}.

The main questions that we would like to answer in this paper are:

\begin{enumerate}

\item What is the  most general framework for (ODE) dynamics that can be said to be consistent with a simplicial complex structure?

\item When the dynamics live on a simplicial complex, and this complex has certain algebraic symmetries, what does this imply for the \{symmetries/quasi-symmetries/etc.\} of the dynamics?

\end{enumerate}

Finally, the main point:  we seek to do for simplicial dynamics what previous authors have done for network dynamics.

Towards this goal, we will first show that the linear structure that is commonly used in homological and cohomological computations both survives in, and is important in understanding, the ODE models on simplicial complexes (Section~\ref{sec:triple}). We then investigate the conjugacy classes of dynamical systems on a specific simplicial complex (Sections~\ref{sec:realization},~\ref{sec:conjugacies}), the impact of choices of orientation on these dynamical systems (Section~\ref{sec:orientation}), and finally finish with a study of the symmetries and invariant spaces of dynamical systems on simplicial complexes (Sections~\ref{sec:symmetries}~\ref{sec:invariant}).

\subsection{General model definition}\label{sec:def}

As described above, we present the general definition of the flow we study.

\newcommand{\dmax}{d_{{\mathsf {max}}}}
\newcommand{\uppadj}{\smallfrown}

\newcommand{\lowadj}{\smallsmile}

\begin{defi}[Simplicial complex]
Let $V$ be a set.  A {\bf $k$-simplex} is a unordered set $\{v_0,\dots,v_k\}$ with $v_i\in V$ and $v_i\neq v_j$ if $i \neq j$.  A {\bf face} of a $k$-simplex is all $(k-1)$-simplices of the form $\{v_0,\dots,v_k\}\setminus \{v_i\}$, which we will also denote by $\{v_0,\dots,\widehat{v_i},\dots,v_k\}$.  A {\bf simplicial complex} $X$ is a collection of simplices closed under inclusion of all faces, and we denote by $X_d$ the set of simplices that contain precisely $d+1$ elements. We will also say that the simplices in $X_d$ are of dimension $d$.  We denote by $\dmax(X)$ the maximal dimension of all of the simplices in $X$.

Two $d$-simplices $F_1,F_2$ of a complex $X$ are called {\bf upper adjacent} if both are faces of some $d+1$-simplex; in this case we write $F_1 \uppadj F_2$. 
Two $d$-simplices $F_1,F_2$ of a complex $X$ are called {\bf lower adjacent} if they have a common face; in this case we write $F_1 \lowadj F_2$.

An ordering of vertices gives an {\bf orientation}, and an {\bf oriented $k$-simplex} is a $k$-simplex with an order type.  By definition, orientation is anti-symmetric with respect to an exchange of vertices.  We denote an ordered simplex by square brackets, e.g. $[v_0,\dots, v_k]$.  \hfill $\triangle$

\end{defi}

For $X$ a simplicial complex, let $C_d(X)$ be the $\R$-vector space with basis given by the elements of $X_d$.  For each $d$, we have the {\bf boundary map} defined by 
\begin{align*}
  \partial_d \colon C_d(X) &\to C_{d-1}(X)\\
    [v_0,\dots,v_d]&\mapsto \sum_{\ell=0}^d (-1)^\ell [v_0,\dots,\widehat{v_\ell},\dots, v_d]
\end{align*}
and extended by linearity.  (As is well known, the map $\partial_d\circ \partial_{d+1}$ is identically zero.)  The elements of the kernel of the map $\partial_d\colon C_d(X)\to C_{d-1}(X)$ are known as the $d$-cycles of the complex $X$, and the image of $\partial_{d+1}\colon C_{d+1}(X)\to C_d(X)$ consists of the $d$-boundaries.  Since $\partial_d\circ \partial_{d+1}=0$, it is natural to define the $d$-th homology group $ H_d(X) := \ker \partial_d / \im \partial_{d+1}$.  There is also a natural inner product on $C_d(X)$; with respect to this inner product we can define the adjoint map $\partial^*_{d}\colon C_{d-1}(X)\to C_d(X)$.  We also define the $d$-cocyles as $\ker(\partial^*_{d+1})$ and $d$-coboundaries as $\im(\partial^*_{d})$. \\

Choosing an ordering of the elements of $X_d$ naturally chooses an ordered basis for $C_d(X)$.  With this ordered basis we can define the matrix $B_d$ to be the matrix representation of $\partial_d$.  We assume implicitly in all that follows that such an ordering has been chosen and fixed for $X_d$.

The most general ODE we will consider here in this paper is given by
\begin{align}\label{eq:1}
\frac{d}{dt}\theta_d = G^0_d(\theta_d) + B_{d}^\intercal\fdown d(B_{d}\theta_d) + B_{d+1}\fup d(B_{d+1}^\intercal\theta_d) \, 
\end{align}
where 
\begin{equation*}
  \fdown d\colon C_{d-1}(X)\to C_{d-1}(X),\quad \fup d\colon C_{d+1}(X)\to C_{d+1}(X)
\end{equation*}
can be arbitrary nonlinear functions. The function $G^0_d$ satisfies the condition that the $i$th coordinate of $G^0_d$ depends only on the $i$th coordinate of its input.  We will write
\begin{equation*}
  \gdown d := B_{d}^\intercal\fdown d B_{d}, \quad\gup d = B_{d+1}\fup d B_{d+1}^\intercal, \quad  G_d:=\gdown d + \gup d,
\end{equation*}
thus giving the more compact description of~\eqref{eq:1}:
\begin{equation}
  \frac{d}{dt}\theta_d = G^0_d(\theta_d)  + \gdown d(\theta_d) + \gup d(\theta_d).
\end{equation}

The main motivation behind this form of equation is that each of the terms represents a different type of interaction.  The first term, $G_d^0(\cdot)$, represents ``internal dynamics'': we allow a simplex to depend on its own value.  The next term, $\gup d(\cdot)$, represents an ``up-coupling'':  each simplex depends on the others only through couplings via a higher-dimensional simplex, and similarly for the ``down-coupling''.  

Finally, although we allow for general nonlinear functions $\fdown d$ and $\fup d$ in~\eqref{eq:1}, we often consider one class of functions in particular, defined as:

\begin{defi}
  We call $f\colon \R^n\to \R^n$ a {\bf componentwise} function if the $k$th component function of $f$ is a function only of the $k$th variable, i.e. if 
  \begin{equation*}
    f_k(x_1,\dots, x_n) = f_k(x_k).
  \end{equation*}
Note that $G^0_d$ is assumed componentwise above, and we will see that we can obtain some strong results when $\fup d$ and $\fdown d$ are assumed to be componentwise. \hfill $\triangle$
  \end{defi}

\section{The triple decomposition}\label{sec:triple}

In this section we present a natural ``triple decomposition'' of the space $C_d(X)$.  This decomposition is standard, but here we show that it naturally interacts well with the nonlinear flow.

\begin{defi}
We denote $W_d := \ker(B_d) \cap  \ker(B_{d+1}^\intercal) \subseteq C_d(X)$.  \hfill $\triangle$
\end{defi}

\begin{lem}\label{spilttingseverywhere}\label{lem:decomp}
For all $d$ we have the orthogonal decomposition
\begin{equation}\label{eq:triple}
  C_d(X)=  \im(B_d^\intercal) \oplus \im(B_{d+1}) \oplus  W_d,
\end{equation}
which we refer to as the {\bf triple decomposition} below.  We also have 
\begin{equation*}
  \ker(B_d) =  \im(B_{d+1}) \oplus W_d, \quad \ker(B_{d+1}^\intercal)  = \im(B_d^\intercal) \oplus W_d
\end{equation*}
from which we obtain the useful orthogonal decompositions
\begin{align}
  C_d(X) &=  \im(B_d^\intercal) \oplus \ker(B_d)\label{eq:downdecomp},\\
  C_d(X) &=  \im(B_{d+1}) \oplus \ker(B_{d+1}^\intercal)\label{eq:updecomp}.
\end{align}
\end{lem}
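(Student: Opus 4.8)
The plan is to establish the triple decomposition \eqref{eq:triple} first, and then derive everything else from it. The core input is the fundamental theorem of linear algebra applied to each boundary matrix $B_d$: for any real matrix we have the orthogonal decomposition of the domain into $\im(B^\intercal)$ and $\ker(B)$, and of the codomain into $\im(B)$ and $\ker(B^\intercal)$. Applying this to $B_d\colon C_d(X)\to C_{d-1}(X)$ gives $C_d(X)=\im(B_d^\intercal)\oplus\ker(B_d)$, which is \eqref{eq:downdecomp}, and applying it to $B_{d+1}\colon C_{d+1}(X)\to C_d(X)$ (viewing $C_d(X)$ as the codomain) gives $C_d(X)=\im(B_{d+1})\oplus\ker(B_{d+1}^\intercal)$, which is \eqref{eq:updecomp}. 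So the last two displayed decompositions are essentially immediate once the triple decomposition is in hand — or rather, they are the building blocks.

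Next I would check orthogonality of the three summands $\im(B_d^\intercal)$, $\im(B_{d+1})$, and $W_d$ pairwise. The identity $\partial_d\circ\partial_{d+1}=0$, i.e. $B_dB_{d+1}=0$, is the crucial homological fact here: it says $\im(B_{d+1})\subseteq\ker(B_d)$, and taking adjoints, $B_{d+1}^\intercal B_d^\intercal=0$, so $\im(B_d^\intercal)\subseteq\ker(B_{d+1}^\intercal)$. From $\im(B_{d+1})\subseteq\ker(B_d)=\im(B_d^\intercal)^\perp$ we get $\im(B_d^\intercal)\perp\im(B_{d+1})$. By definition $W_d=\ker(B_d)\cap\ker(B_{d+1}^\intercal)$, so $W_d\subseteq\ker(B_d)=\im(B_d^\intercal)^\perp$ gives $W_d\perp\im(B_d^\intercal)$, and $W_d\subseteq\ker(B_{d+1}^\intercal)=\im(B_{d+1})^\perp$ gives $W_d\perp\im(B_{d+1})$. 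So the three spaces are mutually orthogonal, and in particular their sum is direct.

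It remains to show the sum is all of $C_d(X)$, equivalently that the orthogonal complement of $\im(B_d^\intercal)\oplus\im(B_{d+1})$ is exactly $W_d$. Using \eqref{eq:downdecomp}, the orthogonal complement of $\im(B_d^\intercal)$ is $\ker(B_d)$; within $\ker(B_d)$, I need the orthogonal complement of $\im(B_{d+1})$. Since $\im(B_{d+1})\subseteq\ker(B_d)$, this complement (taken inside $\ker(B_d)$) is $\ker(B_d)\cap\im(B_{d+1})^\perp=\ker(B_d)\cap\ker(B_{d+1}^\intercal)=W_d$, using \eqref{eq:updecomp} to identify $\im(B_{d+1})^\perp$ with $\ker(B_{d+1}^\intercal)$. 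This yields $\ker(B_d)=\im(B_{d+1})\oplus W_d$, and then $C_d(X)=\im(B_d^\intercal)\oplus\ker(B_d)=\im(B_d^\intercal)\oplus\im(B_{d+1})\oplus W_d$, which is \eqref{eq:triple}. The symmetric statement $\ker(B_{d+1}^\intercal)=\im(B_d^\intercal)\oplus W_d$ follows the same way by starting from \eqref{eq:updecomp} instead, or simply by subtracting $\im(B_{d+1})$ from both sides of the triple decomposition.

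The only mild subtlety — and the place I would be most careful — is the bookkeeping of which decomposition lives in which space: $B_d$ maps $C_d$ down to $C_{d-1}$, so $\im(B_d^\intercal)\subseteq C_d$ and $\ker(B_d)\subseteq C_d$, whereas $B_{d+1}$ maps $C_{d+1}$ to $C_d$, so it is $\im(B_{d+1})\subseteq C_d$ and $\ker(B_{d+1}^\intercal)\subseteq C_d$ that are relevant; both families of subspaces do indeed sit inside the same space $C_d(X)$, which is what makes the intersection $W_d$ and the direct sum meaningful. Everything else is a routine application of finite-dimensional inner-product space facts plus the single relation $B_dB_{d+1}=0$; there is no real obstacle.
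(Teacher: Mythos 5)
Your proposal is correct and follows essentially the same route as the paper's proof in the appendix: both arguments rest on the orthogonal decompositions $C_d(X)=\im(B_d^\intercal)\oplus\ker(B_d)$ and $C_d(X)=\im(B_{d+1})\oplus\ker(B_{d+1}^\intercal)$ (which the paper derives from rank--nullity plus the adjoint identity $\langle x, B_d^\intercal z\rangle=\langle B_d x, z\rangle$, and you cite as the fundamental theorem of linear algebra) together with the single relation $B_dB_{d+1}=0$. Your identification of $W_d$ as the orthogonal complement of $\im(B_{d+1})$ inside $\ker(B_d)$ is just a rephrasing of the paper's step of decomposing an arbitrary $x\in\ker(B_d)$ as $y+z$ and observing $z\in W_d$, so there is nothing further to add.
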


\begin{remark}
We leave the proof of~\ref{lem:decomp} to Appendix~\ref{app:proofs}, but note that all of these statements follow from two observations:  the Rank--Nullity theorem, plus the fact that $B_dB_{d+1} = 0$.\end{remark}

\begin{defi} 
We denote by $P_d$, $Q_d$ and $R_d$ the orthogonal projections onto $\im(B_d^\intercal)$, $\im(B_{d+1})$ and $W_d$, respectively in the triple decomposition~\eqref{eq:triple}.  It follows directly from Lemma~\ref{lem:decomp} that $P_d+Q_d+R_d= I_d := \Id_{C_d(X)}$. We also see that the pair of projections $(P_d, I_d-P_d)$ gives the decomposition~\eqref{eq:downdecomp}, and the pair $(Q_d, I_d-Q_d)$ gives the decomposition~\eqref{eq:updecomp}.  \hfill $\triangle$
\end{defi}

It follows directly from the definitions above that $P_d \bt d = \bt d$ and $Q_{d}B_{d+1} = B_{d+1}$.  We also note that since the projections are orthogonal, they are self-adjoint as operators, and from this $B_dP_d = B_d$ and $\bt{d+1}Q_{d} = \bt{d+1}$.  This means that 
\begin{equation}\label{eq:3=5}
  \gup d = B_{d+1}\fup d B_{d+1}^\intercal = B_{d+1}(P_{d+1}\fup d P_{d+1})B_{d+1}^\intercal.
\end{equation}
(Of course, the analogous argument applies, {\em mutatis mutandis}, for $\fdown d$ and $Q_{d-1}$.) We formalize this:

\newcommand{\equp}[1]{\sim^\uparrow_{#1}}
\newcommand{\eqdown}[1]{\sim^\downarrow_{#1}}

\begin{defi}
Consider the space of all smooth maps from $C_{d}(X)$ to itself.  We define two equivalence relations on this set:  $\equp d, \eqdown d$, defined by
\begin{align*}
  \mc A \equp d \mc B &\iff P_{d} \mc A P_{d} = P_{d} \mc B P_{d},\\
  \mc A \eqdown d \mc B &\iff Q_{d} \mc A Q_{d} = Q_{d} \mc B Q_{d}.
\end{align*}
 \hfill $\triangle$
\end{defi}

\begin{lem}\label{lem:1-to-1}
There is a one-to-one correspondence between the equivalence classes of $\equp {d+1}$ and distinct maps of the form $\gup d = B_{d+1}\fup d B_{d+1}^\intercal$, and the bijection is realized by the map $[\fup d] \mapsto B_{d+1}\fup d B_{d+1}^\intercal$.  

Similarly, there is a one-to-one correspondence between the equivalence classes of $\eqdown {d-1}$ and distinct maps of the form $\gdown d = B_{d}^\intercal\fdown d B_{d}^\intercal$, and the bijection is realized by the map $[\fdown d] \mapsto B_{d}^\intercal\fdown d B_{d}^\intercal$.  

\end{lem}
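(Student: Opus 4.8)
The plan is to prove the two statements in parallel, since they are formally identical after swapping $(B_{d+1}, \fup d, P_{d+1})$ for $(B_d^\intercal, \fdown d, Q_{d-1})$; I will write out the ``up'' case and remark that the ``down'' case follows \emph{mutatis mutandis}. The map in question is $\Phi\colon [\fup d] \mapsto B_{d+1}\,\fup d\, B_{d+1}^\intercal$, and I must check three things: that $\Phi$ is well-defined on equivalence classes, that it is injective, and that its image is exactly the set of maps of the form $B_{d+1}\,\mathcal{F}\,B_{d+1}^\intercal$ with $\mathcal{F}$ a smooth self-map of $C_{d+1}(X)$.

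First, well-definedness. Suppose $\fup d \equp{d+1} \tilde{f}$, i.e.\ $P_{d+1}\fup d P_{d+1} = P_{d+1}\tilde f P_{d+1}$. By equation~\eqref{eq:3=5} (and the identities $B_{d+1}P_{d+1} = B_{d+1}$, $P_{d+1}B_{d+1}^\intercal = B_{d+1}^\intercal$ recorded just above it), we have $B_{d+1}\fup d B_{d+1}^\intercal = B_{d+1}(P_{d+1}\fup d P_{d+1})B_{d+1}^\intercal = B_{d+1}(P_{d+1}\tilde f P_{d+1})B_{d+1}^\intercal = B_{d+1}\tilde f B_{d+1}^\intercal$, so $\Phi$ depends only on the class. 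Surjectivity onto the stated set of maps is essentially the definition: any map of the form $B_{d+1}\mathcal F B_{d+1}^\intercal$ is $\Phi([\mathcal F])$.

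The substantive step is injectivity, and this is the main obstacle. Suppose $B_{d+1}\fup d B_{d+1}^\intercal = B_{d+1}\tilde f B_{d+1}^\intercal$; I must show $P_{d+1}\fup d P_{d+1} = P_{d+1}\tilde f P_{d+1}$. The key observation is that $B_{d+1}$ restricted to $\im(B_{d+1}^\intercal) = \range(P_{d+1})$ is \emph{injective}: indeed, by the decomposition~\eqref{eq:downdecomp} applied at level $d+1$, namely $C_{d+1}(X) = \im(B_{d+1}^\intercal) \oplus \ker(B_{d+1})$, the kernel of $B_{d+1}$ meets $\im(B_{d+1}^\intercal)$ only in $0$. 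Hence $B_{d+1}|_{\range(P_{d+1})}$ is a linear isomorphism onto $\im(B_{d+1})$, and likewise $B_{d+1}^\intercal$ restricted to $\im(B_{d+1}) = \range(Q_d)$ is injective with image $\range(P_{d+1})$ (this is the adjoint statement, using~\eqref{eq:updecomp} at level $d$). Now take any $x \in C_{d+1}(X)$ and write $u = P_{d+1}x \in \im(B_{d+1}^\intercal)$, so $u = B_{d+1}^\intercal w$ for some $w$, which we may take in $\range(Q_d)$. Feeding $w$ into the hypothesis gives $B_{d+1}\fup d(B_{d+1}^\intercal w) = B_{d+1}\tilde f(B_{d+1}^\intercal w)$, i.e.\ $B_{d+1}\fup d(u) = B_{d+1}\tilde f(u)$. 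Since $B_{d+1}$ is injective on $\range(P_{d+1})$ but $\fup d(u), \tilde f(u)$ need not lie there, I first apply $P_{d+1}$ to this equation and use $B_{d+1} = B_{d+1}P_{d+1}$ together with the injectivity of $B_{d+1}|_{\range(P_{d+1})}$ to conclude $P_{d+1}\fup d(u) = P_{d+1}\tilde f(u)$, i.e.\ $P_{d+1}\fup d(P_{d+1}x) = P_{d+1}\tilde f(P_{d+1}x)$ for all $x$. That is precisely $P_{d+1}\fup d P_{d+1} = P_{d+1}\tilde f P_{d+1}$, so $\fup d \equp{d+1}\tilde f$, giving injectivity.

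Wrapping up, I will note that $\Phi$ is thus a well-defined bijection from the set of $\equp{d+1}$-classes onto $\{\,B_{d+1}\mathcal F B_{d+1}^\intercal : \mathcal F \in C^\infty(C_{d+1}(X), C_{d+1}(X))\,\}$, which by definition is the set of distinct maps of the form $\gup d$. The ``down'' statement is obtained by the substitution indicated above, using~\eqref{eq:updecomp} at level $d-1$ and~\eqref{eq:downdecomp} at level $d$ in place of their counterparts, together with the remark parenthetically noted after~\eqref{eq:3=5}. One small point to be careful about in the write-up: the surjectivity/range claim should be stated so that ``distinct maps of the form $\gup d$'' is read as the \emph{set} of such maps (not a multiset), so that the one-to-one correspondence is literally a bijection of sets; with that reading the three steps above complete the proof.
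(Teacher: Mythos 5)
Your proposal is correct and follows essentially the same route as the paper: well-definedness via the identity $B_{d+1}\fup d B_{d+1}^\intercal = B_{d+1}(P_{d+1}\fup d P_{d+1})B_{d+1}^\intercal$, and injectivity via the decomposition $C_{d+1}(X)=\im(B_{d+1}^\intercal)\oplus\ker(B_{d+1})$, which makes $B_{d+1}$ injective on $\im(P_{d+1})$ and lets you cancel it after inserting $B_{d+1}=B_{d+1}P_{d+1}$ (the paper phrases this with the difference $\mathcal{C}=\mathcal{A}-\mathcal{B}$, you phrase it pointwise, but the content is identical). Only polish the sentence ``I first apply $P_{d+1}$ to this equation'': the two sides live in $C_d(X)$, so you are not literally applying $P_{d+1}$ but rather rewriting $B_{d+1}$ as $B_{d+1}P_{d+1}$ before invoking injectivity, which is what the rest of your sentence already does.
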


\begin{proof}
We will prove the statement for $\equp {d+1}$, and the second is similar.

It is clear from~\eqref{eq:3=5} that $\mc A\equp {d+1} \mc B$ implies $B_{d+1} \mc A B_{d+1}^\intercal = B_{d+1} \mc B  B_{d+1}^\intercal$. Now, assume that $\mc A, \mc B$ satisfy $B_{d+1} \mc A B_{d+1}^\intercal = B_{d+1} \mc B  B_{d+1}^\intercal$.  Define $\mc C = \mc A - \mc B$, and then  $B_{d+1} \mc C B_{d+1}^\intercal = 0$. Using~\eqref{eq:3=5} again, we see that $B_{d+1} P_{d+1} \mc C P_{d+1} B_{d+1}^\intercal = 0$ as well. Now, Lemma \ref{spilttingseverywhere} tells us that 
\begin{equation*}
  C_{d+1}(X) =   \im(B_{d+1}^\intercal) \oplus \ker(B_{d+1}) = \im(P_{d+1}) \oplus \ker(B_{d+1}),
\end{equation*}
so that $B_{d+1}$ is injective on $\im(P_{d+1})$. It follows that $P_{d+1} \mc C P_{d+1} B_{d+1}^\intercal = 0$, so that $P_{d+1} \mc C P_{d+1}$ vanishes on $\im(B_{d+1}^\intercal) =  \im(P_{d+1})$. As $P_{d+1}$ is a projection, we conclude that 
\begin{equation*}
 P_{d+1} \mc C P_{d+1} = P_{d+1} \mc C P_{d+1}^2 = (P_{d+1} \mc C P_{d+1}) |_{\im(P_{d+1})}P_{d+1} = 0.
\end{equation*}
This implies $P_{d+1} \mc A P_{d+1}=P_{d+1} \mc B P_{d+1}$ and thus $\mc A\equp {d+1} \mc B$.
\end{proof}

Consider the decomposition of $C_{d+1}(X)$ generated by the pair of projections $(P_{d+1},I_{d+1}-P_{d+1})$; writing $\theta_{d+1} = (x,y) := (P_{d+1}\theta_{d+1}, (I_{d+1}-P_{d+1})\theta_{d+1})$ and $\fup d = (g,h):=(P_{d+1}\fup d,(I_{d+1}-P_{d+1})\fup d)$, then generally we have
\begin{equation*}
  \fup d = \begin{pmatrix}
g(x,y)\\
h(x,y)
\end{pmatrix},\quad 
  P_{d+1}\fup d  P_{d+1} = \begin{pmatrix}
g(x,0)\\
0
\end{pmatrix}\, .
\end{equation*}
From~\eqref{eq:3=5} we see that these ``versions'' of $\fup d$ will give the same $\gup d$, and so far as we are concerned they are equivalent.  Clearly, the function with the projections is simpler, and we would like to use it as a canonical representative of all $\fup d$ that give the same vector field, giving the following definition:

\begin{defi}
  The {\bf canonical representative} of an equivalence class $[\mathcal{A}]$ for $\equp d$ is the function $P_d\mathcal{A}P_d$; \\ the {\bf canonical representative} of an equivalence class $[\mathcal{A}]$ for $\eqdown d$ is the function $Q_d\mathcal{A}Q_d$.  \hfill $\triangle$
\end{defi}

\begin{remk} Note that there is a one-to-one correspondence between equivalence classes of $\sim^{\uparrow}_{d+1}$ and maps from $\im(B_{d+1}^\intercal)$ to itself. More precisely, to a map $f: \im(B_{d+1}^\intercal) \rightarrow \im(B_{d+1}^\intercal)$ we may associate the class under $\sim^{\uparrow}_{d+1}$ of 
\begin{equation}\label{easierformqfq}
\fup d  = \begin{pmatrix}
f(x)\\
0
\end{pmatrix}: C_{d+1}(X) \rightarrow C_{d+1}(X)\, ,
\end{equation}
where the two components of the vector correspond to the decomposition 
\[ C_{d+1}(X) =   \im(B_{d+1}^{\intercal}) \oplus \ker(B_{d+1}) \, . \] 
This bijectively sends the maps $f: \im(B_{d+1}^\intercal) \rightarrow \im(B_{d+1}^\intercal)$ to the canonical representatives for $\equp {d+1}$.
Likewise, there is a correspondence between equivalence classes of  $\sim^{\downarrow}_{d-1}$ and maps from $\im(B_{d})$ to itself, by sending a map $g: \im(B_{d}) \rightarrow \im(B_{d})$ to the class under $\sim^{\downarrow}_{d-1}$ of 
\begin{equation}\label{easierformqfq2}
\fdown d  = \begin{pmatrix}
g(x)\\
0
\end{pmatrix}: C_{d-1}(X) \rightarrow C_{d-1}(X)\, ,
\end{equation}
where the two components correspond to the decomposition 
\[ C_{d-1}(X) =   \im(B_{d}) \oplus \ker(B_{d}^\intercal) \, . \] 
This bijectively sends the maps $g: \im(B_{d}) \rightarrow \im(B_{d})$ to the canonical representatives for $\sim^{\downarrow}_{d-1}$. In conclusion, even though $\gup d$ and $\gdown d$ are both vector fields on $C_d(X)$, we see that they are uniquely determined by vector fields on $\im(B_{d+1}^{\intercal})$ and $\im(B_d)$, respectively.  \hfill $\triangle$
\end{remk}
\section{Realization of specific vector fields}\label{sec:realization}

\newcommand{\pkl}[2]{P_{#1}^{#2}}

A natural question arises:  let us say that we have a particular vector field in mind, is it possible to realize it as a flow on a simplicial complex?  More specifically, we would like to describe the conjugacy class of all vector fields of the form
\begin{align}\label{eq:1'}
\frac{d}{dt}\theta_d = B_{d}^\intercal\fdown d(B_{d}\theta_d) + B_{d+1}\fup d(B_{d+1}^\intercal\theta_d) .
\end{align}
Also, we want to generate a process by which we can determine how to choose $\fup d, \fdown d$, to get the vector field we desire.

\begin{defi}[Moore--Penrose psuedoinverse]~\cite[Section 7.3]{Horn.Johnson.book}\label{moorepenrose}
  If $L\colon V\to W$ is a linear operator, then the linear operator $L^+\colon W \to V$ satisfying:
\begin{enumerate}

\item $LL^+L = L$;
\item $L^+ L L^+ = L^+$;
\item $LL^+$ and $L^+L$ are positive semi-definite.
\end{enumerate}
is known as the {\bf Moore--Penrose psuedoinverse} (or, more briefly, the {\bf psuedoinverse}) of $L$.  \hfill $\triangle$
\end{defi}

\begin{lem}\label{lem:oplus}
  For any $L\colon V\to W$, the pseudoinverse exists and is unique.  Moreover, we have the following orthogonal decompositions:
  \begin{equation*}
    V = \ker L \oplus \im L^+,\quad\quad W = \ker L^+ \oplus \im L.
  \end{equation*}
  Finally, $(L^*)^+ = (L^+)^*$, and we write $L^{+*}$ to be either operator.
\end{lem}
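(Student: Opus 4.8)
The plan is to dispatch the four assertions in turn: existence, uniqueness, the two orthogonal decompositions, and the adjoint compatibility. For \textbf{existence}, I would use the orthogonal splitting $V = \ker L \oplus (\ker L)^\perp$ (which is available from rank--nullity, exactly as in Lemma~\ref{lem:decomp}): the operator $L$ restricts to a linear isomorphism $L_0\colon (\ker L)^\perp \to \im L$. Define $L^+$ to be $L_0^{-1}$ on $\im L$ and $0$ on $(\im L)^\perp = \ker L^*$ (equivalently, take a singular value decomposition $L = U\Sigma V^*$ and set $L^+ = V\Sigma^+U^*$, inverting the nonzero singular values). With this definition one checks directly that $L^+L$ is the orthogonal projection of $V$ onto $(\ker L)^\perp$ and $LL^+$ is the orthogonal projection of $W$ onto $\im L$, from which the three defining properties of Definition~\ref{moorepenrose} follow immediately.

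For \textbf{uniqueness}, I would use the standard algebraic chain. If $G_1$ and $G_2$ both satisfy properties (1)--(3), then property (3) forces each of $LG_i$ and $G_iL$ to be self-adjoint, and, substituting $L = LG_2L$,
\[
G_1 = G_1LG_1 = G_1(LG_1)^* = G_1G_1^*L^* = G_1G_1^*(LG_2L)^* = G_1(LG_1)^*(LG_2)^* = G_1(LG_1)(LG_2) = G_1LG_2;
\]
by symmetry $G_2 = G_1LG_2$ as well, so $G_1 = G_2$.

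For the \textbf{decompositions}, note that by property (1) the operator $LL^+$ is idempotent, and by property (3) it is self-adjoint (a positive semi-definite operator is self-adjoint), so it is the orthogonal projection onto its image; property (1) identifies that image as $\im L$, and from $LL^+w = 0 \Rightarrow L^+w = L^+LL^+w = 0$ we get $\ker(LL^+) = \ker L^+$. Hence $W = \ker L^+ \oplus \im L$ orthogonally, and the identical argument with $L^+L$ in place of $LL^+$ gives $V = \ker L \oplus \im L^+$. Finally, for the \textbf{adjoint statement}, it suffices to verify that $(L^+)^*$ satisfies properties (1)--(3) for $L^*$ and then invoke uniqueness: (1) and (2) follow by taking adjoints of $LL^+L = L$ and $L^+LL^+ = L^+$, while (3) holds because $L^*(L^+)^* = (L^+L)^*$ and $(L^+)^*L^* = (LL^+)^*$ are adjoints of orthogonal projections, hence themselves orthogonal projections, hence positive semi-definite.

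I do not expect a genuine obstacle; this is a classical fact. The only point needing a little care is the use of the ``positive semi-definite'' clause of Definition~\ref{moorepenrose}: in this real inner-product setting it is equivalent to self-adjointness of the idempotents $LL^+$ and $L^+L$, and it is precisely that self-adjointness --- rather than positivity itself --- that drives both the uniqueness chain and the orthogonality of the two decompositions.
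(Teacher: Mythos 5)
Your proof is correct, and three of its four parts (the existence construction via the restriction $L_0\colon(\ker L)^\perp\to\im L$ extended by zero on $(\im L)^\perp$, the derivation of the two orthogonal decompositions from the self-adjoint idempotents $LL^+$ and $L^+L$ together with the kernel/image identities, and the uniqueness-based argument for $(L^*)^+=(L^+)^*$) coincide with the paper's proof essentially line for line. The one genuine divergence is uniqueness: the paper first establishes $\ker(LL^+)=\ker L^+$, $\im(LL^+)=\im L$ and the two decompositions, and then argues geometrically that any pseudoinverse must vanish on $(\im L)^\perp$ and is pinned down on $\im L$ because $Lx=y$ has at most one solution $x\in(\ker L)^\perp$; you instead run the classical Penrose identity chain $G_1=G_1LG_2=G_2$, which is purely algebraic and does not need the decompositions at all. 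The trade-off is that the paper's route makes the decompositions do double duty (they are both a stated conclusion and the engine of uniqueness), while yours decouples the two, so uniqueness would survive even in settings where one does not want to invoke the splittings first. Both arguments lean on reading clause (3) of Definition~\ref{moorepenrose} (``positive semi-definite'') as entailing self-adjointness of $LL^+$ and $L^+L$; the paper does this silently (``Since $LL^+$ and $L^+L$ are both symmetric\dots''), and you are right to flag it explicitly, since in a real inner-product space $\langle Ax,x\rangle\ge 0$ alone would not force symmetry --- the intended reading, consistent with the cited reference, is symmetric positive semi-definite.
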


We prove this lemma below in Appendix~\ref{app:proofs}.

\begin{rem}
  By choosing a basis for $V$ and $W$, we can derive the analogous statements about matrices. \hfill $\triangle$
\end{rem}

\begin{defi}
  Given a vector space $V$ and a decomposition $V = V_1 \oplus V_2$, we say that $F\colon V\to V$ is {\bf independent of $V_1$} if, whenever $v = v_1+v_2$ with respect to the decomposition, we have $F(v) = F(v_2)$. \hfill $\triangle$
\end{defi}

\begin{lem}\label{lem:FofG}
  Let $L\colon V\to W$ be a linear map and assume that $G\colon W\to W$ has the property that $G$ is independent of $\ker L^*$ with respect to the decomposition $W = \ker L^* \oplus \im L^{+*}$, and $G(W)\subseteq \im L$.  Then there exists a function $F\colon V\to V$ such that $G(x) = LF(L^* x)$.    
  More specifically, we can choose 
\begin{equation}\label{eq:defofF}
  F(y) := L^+G(L^{+*} y)
\end{equation}  
as such a function (although this is typically not the only choice that works).
\end{lem}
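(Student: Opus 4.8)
The plan is to verify directly that the explicitly proposed map $F(y) := L^+ G(L^{+*} y)$ works; this turns the lemma into two structural observations about how the Moore--Penrose pseudoinverse interacts with the orthogonal decompositions of Lemma~\ref{lem:oplus}. First I would substitute the definition to get $L F(L^* x) = L L^+\, G\!\left(L^{+*} L^* x\right)$ for every $x \in W$, so that it remains to show (i) $G(L^{+*} L^* x) = G(x)$, and (ii) $L L^+ G(x) = G(x)$.

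For step (ii), I would note that $L L^+$ is idempotent (property~1 of Definition~\ref{moorepenrose} gives $(L L^+)(L L^+) = L L^+$) and self-adjoint (property~3), hence the orthogonal projection of $W$ onto its range; since $\im(L L^+) = \im L$ (again by property~1), this is precisely the projection onto the summand $\im L$ of the decomposition $W = \ker L^+ \oplus \im L$ from Lemma~\ref{lem:oplus}. The hypothesis $G(W) \subseteq \im L$ then puts $G(x)$ in the fixed subspace of $L L^+$, giving (ii).

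For step (i), I would run the identical argument for the operator $L^* \colon W \to V$, using $(L^*)^+ = (L^+)^* = L^{+*}$ from Lemma~\ref{lem:oplus}: the operator $L^{+*} L^* = (L^*)^+ L^*$ is an idempotent, self-adjoint operator on $W$ with range $\im\!\big((L^*)^+\big) = \im L^{+*}$, hence (by Lemma~\ref{lem:oplus} applied to $L^*$) the orthogonal projection onto the second summand of $W = \ker L^* \oplus \im L^{+*}$, parallel to $\ker L^*$. Writing $x = x_0 + x_1$ accordingly, so that $L^{+*} L^* x = x_1$, the assumption that $G$ is independent of $\ker L^*$ gives $G(x) = G(x_1) = G(L^{+*} L^* x)$. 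Combining (i) and (ii) finishes the argument: $L F(L^* x) = G(x)$.

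I expect the only real obstacle to be careful bookkeeping with the pseudoinverse identities rather than anything conceptual: one has to use property~3 of Definition~\ref{moorepenrose} precisely where it is needed --- namely to upgrade the idempotents $L L^+$ and $L^{+*} L^*$ to \emph{orthogonal} projections --- and to keep straight how passing from $L$ to $L^*$ swaps kernels and images in a way compatible with the two decompositions in Lemma~\ref{lem:oplus}. Once the projection identities are nailed down the proof is a one-line substitution. As a closing remark, in line with the parenthetical in the statement, I would point out that $F$ is highly non-unique: only $F|_{\im L^*}$ affects the composition $F \circ L^*$, and adding to $F$ any map taking values in $\ker L$ leaves $L F(L^* x)$ unchanged, so many alternative choices realize the same $G$.
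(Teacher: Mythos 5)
Your proof is correct and follows essentially the same route as the paper's: substitute the explicit formula $F(y)=L^+G(L^{+*}y)$ and use the two hypotheses to absorb the operators $LL^+$ and $L^{+*}L^*$. The paper does this bookkeeping by factoring $G=L\mathcal{G}$ and invoking the identities $LL^+L=L$ and $L^{+*}L^*L^{+*}=L^{+*}$, whereas you identify $LL^+$ and $L^{+*}L^*$ as the orthogonal projections onto $\im L$ and $\im L^{+*}$ --- a slightly cleaner organization of the same verification.
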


\begin{proof}
  Since $G(W)\subseteq \im L$, we can write $G(x) = L\mc G(x)$ for some function $\mc G$.  Since $G$ is independent of $\ker L^*$, we can consider the decomposition $W = \ker L^* \oplus \im L^{+*}$.  If we let $x = x_1+x_2$ in this decomposition, then $G(x) = G(x_2)$, and as such we can assume that the argument of $G$ lies in $\im L^{+*}$.  In summary, we can write $G(x) = L\mc G(L^{+*} y)$ for some $y\in V$.
  
  Now assume the choice made in~\eqref{eq:defofF}, and we compute
  \begin{align*}
    LF(L^*x) = LL^+G(L^{+*}L^*x) = LL^+L\mc G(L^{+*}L^*L^{+*} y) = L\mc G(L^{+*}y) = G(x).
  \end{align*}

\end{proof}

\begin{prop}\label{prop:invariant-converse}
Let $G_1\colon \im(B_d^\intercal)\to \im(B_d^\intercal)$ and $G_2\colon \im(B_{d+1})\to \im(B_{d+1})$, then $G_1\oplus G_2\oplus \0_{W_d} $ is the right-hand side of~\eqref{eq:1'} for some choices of $\fup d, \fdown d$.
\end{prop}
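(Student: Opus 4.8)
The plan is to construct the down-coupling and up-coupling terms separately, each via a single application of Lemma~\ref{lem:FofG}. First I would extend the two given maps to all of $C_d(X)$ by precomposing with the triple-decomposition projections: set $\widetilde G_1 := G_1\circ P_d$ and $\widetilde G_2 := G_2\circ Q_d$, viewed as maps $C_d(X)\to C_d(X)$. Writing $\theta_d = x+y+w$ in the triple decomposition~\eqref{eq:triple}, we have $\widetilde G_1(\theta_d) = G_1(x)$ and $\widetilde G_2(\theta_d) = G_2(y)$, so the target vector field is precisely $\widetilde G_1 + \widetilde G_2$, since $(G_1\oplus G_2\oplus\0_{W_d})(\theta_d) = G_1(x)+G_2(y)$. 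It therefore suffices to produce $\fdown d$ with $B_d^\intercal\fdown d(B_d\theta_d)=\widetilde G_1(\theta_d)$ and $\fup d$ with $B_{d+1}\fup d(B_{d+1}^\intercal\theta_d)=\widetilde G_2(\theta_d)$, and then add the two identities.

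For $\fdown d$, I would apply Lemma~\ref{lem:FofG} with $L = B_d^\intercal\colon C_{d-1}(X)\to C_d(X)$ and $G = \widetilde G_1$. Here $L^*=B_d$, and using $(L^*)^+=(L^+)^*$ from Lemma~\ref{lem:oplus} together with the triple decomposition one gets $\im L^{+*}=\im B_d^+=\im(B_d^\intercal)$, so the ambient splitting $C_d(X)=\ker L^*\oplus\im L^{+*}$ appearing in Lemma~\ref{lem:FofG} is exactly~\eqref{eq:downdecomp}. With respect to that splitting, $\widetilde G_1=G_1\circ P_d$ is manifestly independent of $\ker(B_d)$ (since $P_d$ kills $\ker(B_d)$), and $\im\widetilde G_1\subseteq\im(G_1)\subseteq\im(B_d^\intercal)=\im L$; hence the hypotheses hold and Lemma~\ref{lem:FofG} yields the required $\fdown d$. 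Explicitly, $\fdown d=(B_d^\intercal)^+\circ G_1\circ B_d^+$ works, using $B_d^+B_d=P_d=B_d^\intercal(B_d^\intercal)^+$. The up-coupling is handled identically with $L=B_{d+1}\colon C_{d+1}(X)\to C_d(X)$, $L^*=B_{d+1}^\intercal$, the ambient splitting now being~\eqref{eq:updecomp}, giving $\fup d=B_{d+1}^+\circ G_2\circ(B_{d+1}^\intercal)^+$ (using $B_{d+1}B_{d+1}^+=Q_d=(B_{d+1}^\intercal)^+B_{d+1}^\intercal$).

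The only step that needs care — more a matter of bookkeeping than a genuine obstacle — is recognizing that the abstract decomposition $W=\ker L^*\oplus\im L^{+*}$ in the hypotheses of Lemma~\ref{lem:FofG} coincides with the concrete splittings~\eqref{eq:downdecomp} and~\eqref{eq:updecomp} coming from the triple decomposition; this is exactly where Lemma~\ref{lem:oplus} and the structure of the triple decomposition get combined. Once that identification is made, the independence-of-kernel and range conditions are immediate from the definitions of $P_d$ and $Q_d$, and the final assembly is a one-line computation. Alternatively, one can bypass Lemma~\ref{lem:FofG} entirely and simply verify the two explicit formulas for $\fdown d$ and $\fup d$ directly.
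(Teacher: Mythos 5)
Your proof is correct and follows essentially the same route as the paper: extend $G_1$ and $G_2$ to all of $C_d(X)$ by precomposing with the projections onto $\im(B_d^\intercal)$ and $\im(B_{d+1})$, then invoke Lemma~\ref{lem:FofG} with $L=B_d^\intercal$ and $L=B_{d+1}$ respectively and add the two resulting identities. Your extra bookkeeping (identifying $\im L^{+*}$ with $\im(B_d^\intercal)$ via Lemma~\ref{lem:oplus}, and the explicit formulas $\fdown d=(B_d^\intercal)^+\circ G_1\circ B_d^+$, $\fup d=B_{d+1}^+\circ G_2\circ(B_{d+1}^\intercal)^+$) is accurate but simply fills in details the paper leaves implicit.
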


\begin{proof}
  Let us write $U_d = \im(B_d^\intercal)$ and $V_d = \im(B_{d+1})$.    For each of $G_1, G_2$, we extend their domains.   For $x\in C_d(X)$, write $x = x_1 + x_2 \in U_d \oplus U_d^\perp$, and define $\widetilde{G}_1(x) := G_1(x_1)$.  Similarly, for $y\in C_d(X)$, write $y = y_1 + y_2 \in V_d \oplus V_d^\perp$, and define $\widetilde{G}_2(x) := G_2(x_2)$.
  
Note that $\widetilde{G}_1$ satisfies the assumptions of Lemma~\ref{lem:FofG} for the linear operator given by $\bm {d}^{\intercal}$; clearly $\widetilde{G}_1(C_d(X))\subseteq U_d$ by definition, and $U_d^\perp = (\im(B_d^\intercal))^\perp = \ker B_{d}$.   As such there is $\fdown d\colon C_{d-1}(X)\to C_{d-1}(X)$ giving $\widetilde{G}_1 = B_{d}^{\intercal}\fdown d \bm {d}$.  The argument is similar for $\widetilde{G}_2$, which gives the result.
\end{proof}

  \begin{definition}\label{def:abn}
 We call $H\colon \R^{n}\to\R^{n}$ an $(\alpha,\beta,n)$-vector field (for $\alpha+\beta \le n$)  if there exist $H_1\colon \R^\alpha\to\R^\alpha$ and $H_2\colon \R^\beta\to\R^\beta$ such that 
  \begin{equation*}
    H = H_1 \oplus H_2 \oplus H_3, 
  \end{equation*}
  where $H_3$ is the zero function on $\R^{n-\alpha-\beta}$. \hfill $\triangle$
\end{definition}

\begin{prop}\label{thm:conj}
   The conjugacy classes of all systems of the form~\eqref{eq:1'} are exactly the $(r_d, r_{d+1}, n_d)$-vector fields, where $r_{d} = \dim(\im(B_{d}^{\intercal}))$, $r_{d+1} = \dim(\im(B_{d+1}))$,  and $n_d = \dim(C_d(X))$.  More specifically, every vector field of the form~\eqref{eq:1'} is conjugate to an  $(r_d, r_{d+1}, n_d)$-vector field; conversely, any $(r_d, r_{d+1}, n_d)$-vector field is conjugate to~\eqref{eq:1'} for some choice of $\fdown d(\cdot), \fup d(\cdot)$.
\end{prop}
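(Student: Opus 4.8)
The plan is to prove both directions by exploiting the triple decomposition of Lemma~\ref{lem:decomp} together with the realization result of Proposition~\ref{prop:invariant-converse}. The key observation is that conjugacy here means smooth change of coordinates, so I will first fix a linear isometry adapted to the decomposition $C_d(X) = \im(B_d^\intercal) \oplus \im(B_{d+1}) \oplus W_d$, which has dimensions $r_d$, $r_{d+1}$, and $n_d - r_d - r_{d+1}$ respectively.

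For the forward direction, I would start from an arbitrary vector field of the form~\eqref{eq:1'}, i.e. $\dot\theta_d = \gdown d(\theta_d) + \gup d(\theta_d)$. The first step is to show that the three summands of the triple decomposition are each invariant under this vector field. Indeed, $\gup d = B_{d+1}\fup d B_{d+1}^\intercal$ has image inside $\im(B_{d+1})$, and since $B_{d+1}^\intercal$ annihilates both $\im(B_d^\intercal)$ (because $B_d B_{d+1} = 0$ gives $\im(B_d^\intercal) \subseteq \ker(B_{d+1}^\intercal)$... more precisely $\im(B_d^\intercal) = \ker(B_{d+1}^\intercal)^\perp$; one uses that $B_{d+1}^\intercal$ kills $\im(B_d^\intercal)$ via $\langle B_{d+1}^\intercal B_d^\intercal u, v\rangle = \langle u, B_d B_{d+1} v\rangle = 0$) and $W_d$, the whole term $\gup d$ depends only on the $\im(B_{d+1})$-component of $\theta_d$ and takes values in $\im(B_{d+1})$. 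Symmetrically $\gdown d$ depends only on the $\im(B_d^\intercal)$-component and takes values there, and $W_d$ receives no contribution at all. Hence in coordinates adapted to the decomposition the system splits as $\dot x = G_1(x)$, $\dot y = G_2(y)$, $\dot z = 0$ with $x \in \R^{r_d}$, $y \in \R^{r_{d+1}}$, $z \in \R^{n_d - r_d - r_{d+1}}$, which is precisely an $(r_d, r_{d+1}, n_d)$-vector field. The conjugacy is the fixed linear isometry, which is in particular a smooth diffeomorphism.

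For the converse, I would take an arbitrary $(r_d, r_{d+1}, n_d)$-vector field $H = H_1 \oplus H_2 \oplus H_3$ with $H_3 = 0$, transport it by the same linear isometry to a vector field on $C_d(X)$ that in the triple decomposition has the form $G_1 \oplus G_2 \oplus \mathbf{0}_{W_d}$ with $G_1\colon \im(B_d^\intercal)\to\im(B_d^\intercal)$ and $G_2\colon\im(B_{d+1})\to\im(B_{d+1})$, and then invoke Proposition~\ref{prop:invariant-converse} to produce $\fdown d$ and $\fup d$ realizing it as the right-hand side of~\eqref{eq:1'}. Since the two vector fields are conjugate via that isometry, this gives the claim. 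I should also remark that the dimension constraint $r_d + r_{d+1} \le n_d$ needed in Definition~\ref{def:abn} holds automatically because $\im(B_d^\intercal)$ and $\im(B_{d+1})$ are orthogonal subspaces of $C_d(X)$ (again from $B_d B_{d+1} = 0$), so their dimensions sum to at most $n_d$.

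The main obstacle, such as it is, is bookkeeping rather than mathematics: one must be careful that the notion of ``conjugacy'' in the statement is the one for which a linear isometry suffices (topological/smooth conjugacy of vector fields), and one must verify cleanly that $\gdown d$ and $\gup d$ genuinely act on disjoint orthogonal summands — i.e. that $\gdown d$ is independent of $\im(B_{d+1}) \oplus W_d$ and maps into $\im(B_d^\intercal)$, and dually for $\gup d$ — which is exactly the content of equation~\eqref{eq:3=5} and the self-adjointness of the projections discussed just before it. Once that invariance-and-splitting statement is nailed down, both inclusions between the set of conjugacy classes and the set of $(r_d, r_{d+1}, n_d)$-vector fields are immediate, the converse being essentially a restatement of Proposition~\ref{prop:invariant-converse}.
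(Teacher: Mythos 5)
Your proposal is correct and follows essentially the same route as the paper's proof: both directions rest on the triple decomposition of Lemma~\ref{lem:decomp}, a fixed linear isomorphism $M = M_1\oplus M_2\oplus M_3$ adapted to it serving as the conjugacy, and Proposition~\ref{prop:invariant-converse} for the realization step. Your explicit check that $\gup d$ and $\gdown d$ act on disjoint orthogonal summands, and the remark that $r_d + r_{d+1}\le n_d$ holds automatically, are exactly the points the paper uses (via $\ker(B_d)=\im(B_{d+1})\oplus W_d$ and $\ker(B_{d+1}^\intercal)=\im(B_d^\intercal)\oplus W_d$).
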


\begin{proof}
  Let $H(\cdot)$ be  a fixed $(r_d, r_{d+1}, n_d)$-vector field, and suppose
  \begin{equation*}
  M_1 \colon \im(\bt d) \to \R^{r_d},\quad   M_2\colon \im(B_{d+1})\to \R^{r_{d+1}},\quad  M_3\colon W_d \to \R^{n_d-r_d-r_{d+1}},
  \end{equation*}
  are given isomorphisms.  Define $G_i(x) = M^{-1}_iH_i(M_ix)$ for $i = 1,2,3$ and $G(x) = M^{-1}H(Mx)$, where $M = M_1 \oplus M_2 \oplus M_3$. By Lemma ~\ref{spilttingseverywhere} we have $G = G_1\oplus G_2 \oplus G_3$ (where the decomposition here is the triple decomposition of~\eqref{eq:triple}).   Note that $G_1\colon \im(B_{d}^{\intercal}) \to \im(B_{d}^{\intercal}) $, $G_2\colon \im(B_{d+1})\to \im(B_{d+1})$ and $G_3 \colon W_d \to W_d$.
  
  Clearly, $G$ and $H$ are conjugate, and we have $G_3 = \0_{W_d}$.  Using Proposition~\ref{prop:invariant-converse}, we see that $G(x)$ can be put into the form of~\eqref{eq:1'}.
  
Conversely, assume that $G(x)$ is in the form of~\eqref{eq:1'}.  Since $B_{d}$ vanishes on $ \ker(B_d) =  \im(B_{d+1}) \oplus W_d$ and $B^{\intercal}_{d+1}$ vanishes on $ \ker(B_{d+1}^\intercal)  = \im(B_d^\intercal) \oplus W_d$, we see that
$G = \gdown d|_{\im(B_d^\intercal)} \oplus \gup d|_{\im(B_{d+1})} \oplus \0_{W_d}$.  Writing $H_i(x) = M_iG_i(M_i^{-1}x)$ and $H = H_1\oplus H_2 \oplus H_3$ shows that $H$ is a $(r_d, r_{d+1}, n_d)$-vector field, and we are done.
\end{proof}

\begin{rem}
  Said colloquially, the conjugacy class of~\eqref{eq:1'} is the set of vector fields which decompose into two vector fields, one of dimension $r_d$ and the other of dimension $r_{d+1}$, in an $n_d$-dimensional vector space. \hfill $\triangle$
\end{rem}

\begin{ex}[Special case of one term]

Let us consider the case where there is a single ``up'' or ``down'' term on the right-hand side of Equation~\eqref{eq:1'} .  This happens, for example, whenever we consider the ``top-dimensional'' flow, or the vertex flow.     The top-dimensional flow on a simplicial complex of dimension $D$ is:
\begin{equation}\label{eq:triangle}
  \theta_D' = B_D^\intercal \fdown D(B_D \theta_D).
\end{equation}
Most of the results in this section carry over with the appropriate modifications. The main difference here is that we have the decomposition $C_D(X) = \im \bt D \oplus \ker \bm D$, and thus the conjugacy class is the set of $(r_D, 0, n_D)$-vector fields. \hfill $\triangle$

\end{ex}

\section{Conjugacies using the Laplacian}\label{sec:conjugacies}

\noindent The term $\gup d = B_{d+1}\fup d B_{d+1}^\intercal$ can be seen as a distortion of $\fup d$, and similarly for $\gdown d$ with respect to $\fdown d$. The following useful lemma paints a clearer picture of this distortion. 

\begin{lem}\label{conjugacieseverywhe}
The system \[\dot{\theta} = \gup d(\theta) = B_{d+1}\fup d B_{d+1}^\intercal(\theta)\, ,\] seen as an ODE on $\im(B_{d+1})$, is conjugate to both systems on $\im(B_{d+1}^\intercal)$:
\[\dot{x} = (B_{d+1}^{\intercal}B_{d+1})\fup d(x)  \quad \text{ and } \quad \dot{y} = P_{d+1}\fup d(B_{d+1}^{\intercal}B_{d+1}y) \, ,\]
where $x,y \in \im(B_{d+1}^\intercal)$. \\
Similarly, the system \[\dot{\theta} = \gdown d(\theta) = B_{d}^\intercal\fdown d B_{d}(\theta)\, ,\] seen as an ODE on $\im(B_{d}^\intercal)$, is conjugate to both systems on $\im(B_{d})$:
\[\dot{x} = (B_d B_{d}^{\intercal})\fdown d(x) \quad \text{ and } \quad \dot{y} = Q_{d-1}\fdown d(B_d B_{d}^{\intercal} y) \, , \]
where $x,y \in \im(B_{d})$.
\end{lem}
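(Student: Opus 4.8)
The plan is to establish a conjugacy between the ODE $\dot\theta = B_{d+1}\fup d B_{d+1}^\intercal(\theta)$ on $\im(B_{d+1})$ and the ODE $\dot x = (B_{d+1}^\intercal B_{d+1})\fup d(x)$ on $\im(B_{d+1}^\intercal)$ by writing down an explicit linear conjugating isomorphism between these two spaces, and then verifying the intertwining relation by a short computation. The natural candidate is the map $B_{d+1}^\intercal \colon \im(B_{d+1}) \to \im(B_{d+1}^\intercal)$. By Lemma~\ref{spilttingseverywhere}, we have $C_{d+1}(X) = \im(B_{d+1}^\intercal) \oplus \ker(B_{d+1})$, so $B_{d+1}$ is injective on $\im(B_{d+1}^\intercal)$; dually $B_{d+1}^\intercal$ is injective on $\im(B_{d+1}) = (\ker B_{d+1}^\intercal)^\perp$, and it surjects onto $\im(B_{d+1}^\intercal)$, hence it is the required linear isomorphism. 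First I would fix the notation $\psi := B_{d+1}^\intercal|_{\im(B_{d+1})}\colon \im(B_{d+1}) \to \im(B_{d+1}^\intercal)$ and record that it is invertible.

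Next I would check that $\psi$ conjugates the two flows, i.e. that $\psi \circ \gup d = \big((B_{d+1}^\intercal B_{d+1})\fup d\big)\circ \psi$ as maps $\im(B_{d+1}) \to \im(B_{d+1}^\intercal)$. Applying the left side to $\theta \in \im(B_{d+1})$ gives $B_{d+1}^\intercal B_{d+1}\fup d(B_{d+1}^\intercal\theta)$; applying the right side gives $(B_{d+1}^\intercal B_{d+1})\fup d(B_{d+1}^\intercal\theta)$, so the two agree identically — the conjugacy is in fact immediate from the definitions once the isomorphism is identified. (Strictly one should also note that $\gup d$ indeed maps $\im(B_{d+1})$ into itself, which is clear since its image lies in $\im(B_{d+1})$, and similarly that $x \mapsto (B_{d+1}^\intercal B_{d+1})\fup d(x)$ maps $\im(B_{d+1}^\intercal)$ into itself, which holds because $\im(B_{d+1}^\intercal B_{d+1}) = \im(B_{d+1}^\intercal)$ — again using Lemma~\ref{spilttingseverywhere}.)

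For the second system on $\im(B_{d+1}^\intercal)$, namely $\dot y = P_{d+1}\fup d(B_{d+1}^\intercal B_{d+1} y)$, I would exhibit a further linear conjugacy between it and $\dot x = (B_{d+1}^\intercal B_{d+1})\fup d(x)$. The operator $A := B_{d+1}^\intercal B_{d+1}$, restricted to $\im(B_{d+1}^\intercal)$, is symmetric positive definite (injective on that space by the splitting, and self-adjoint), so it has a positive-definite symmetric square root $A^{1/2}$ which is also an isomorphism of $\im(B_{d+1}^\intercal)$. Conjugating $\dot x = A\fup d(x)$ by $A^{1/2}$ (substituting $x = A^{1/2} y$) yields $\dot y = A^{-1/2} A \fup d(A^{1/2} y) = A^{1/2}\fup d(A^{1/2} y)$; this is conjugate in turn to $\dot z = A \fup d(A z)$ after rescaling, and since $P_{d+1}$ acts as the identity on $\im(B_{d+1}^\intercal)$ while $\fup d$ is only evaluated there, $P_{d+1}\fup d(Az)$ coincides with $A^{1/2}\fup d(A^{1/2}\cdot)$ up to the same linear change of coordinates. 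I would clean this up by composing $\psi$ with $A^{1/2}$ directly: the map $A^{1/2}\circ\psi\colon \im(B_{d+1}) \to \im(B_{d+1}^\intercal)$ should conjugate $\gup d$ to the $P_{d+1}$-form. The down-adjacency statement is then obtained verbatim by the substitution $B_{d+1}\rightsquigarrow B_d^\intercal$, $B_{d+1}^\intercal\rightsquigarrow B_d$, $P_{d+1}\rightsquigarrow Q_{d-1}$, using the decomposition~\eqref{eq:updecomp}/\eqref{eq:downdecomp} in the appropriate degree.

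The main obstacle is bookkeeping rather than conceptual depth: one must be careful that $P_{d+1}$ (the projection onto $\im(B_{d+1}^\intercal)$) restricts to the identity on the space where the argument of $\fup d$ lives, so that the seemingly different right-hand sides $(B_{d+1}^\intercal B_{d+1})\fup d(x)$ and $P_{d+1}\fup d(B_{d+1}^\intercal B_{d+1} y)$ really are linearly conjugate, and not accidentally losing injectivity when restricting $B_{d+1}^\intercal B_{d+1}$ or its square root to the correct invariant subspace. Verifying that $A = B_{d+1}^\intercal B_{d+1}$ preserves $\im(B_{d+1}^\intercal)$ and is invertible there — both consequences of $C_{d+1}(X) = \im(B_{d+1}^\intercal)\oplus\ker(B_{d+1})$ in Lemma~\ref{spilttingseverywhere} — is the one place where the triple decomposition is genuinely used.
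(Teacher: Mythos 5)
Your first half is correct and is exactly the paper's argument: $B_{d+1}^\intercal$ restricted to $\im(B_{d+1})$ is a bijection onto $\im(B_{d+1}^\intercal)$ by the splitting $C_d(X)=\im(B_{d+1})\oplus\ker(B_{d+1}^\intercal)$, and the intertwining identity $B_{d+1}^\intercal\circ\gup d=(B_{d+1}^\intercal B_{d+1}\fup d)\circ B_{d+1}^\intercal$ is immediate. The gap is in your treatment of the second system $\dot y=P_{d+1}\fup d(B_{d+1}^\intercal B_{d+1}y)$. You assert that ``$P_{d+1}$ restricts to the identity on the space where the argument of $\fup d$ lives,'' but in this formula $P_{d+1}$ is applied to the \emph{output} of $\fup d$, not its argument; $\fup d(B_{d+1}^\intercal B_{d+1}y)$ is a general element of $C_{d+1}(X)$, so the projection is essential there and cannot be identified away. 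This misplacement propagates: writing $A=B_{d+1}^\intercal B_{d+1}$, the substitution $x=A^{1/2}y$ in $\dot x=A\fup d(x)$ actually produces $\dot y=A^{1/2}P_{d+1}\fup d(A^{1/2}y)$ (the projection appears because $A^{-1/2}A=A^{1/2}P_{d+1}$ on $C_{d+1}(X)$, $A$ annihilating $\ker(B_{d+1})$), and your intermediate claim that $\dot y=A^{1/2}\fup d(A^{1/2}y)$ is conjugate to $\dot z=A\fup d(Az)$ is false for general $\fup d$: matching the inner factor forces $M=A^{1/2}$, which makes the outer factor the identity rather than $A$. Concretely, your proposed composite intertwiner $A^{1/2}\circ\psi$ does not work: one computes $A^{1/2}B_{d+1}^\intercal\gup d=A^{3/2}P_{d+1}\fup d B_{d+1}^\intercal$ while $\bigl(P_{d+1}\fup d A\bigr)\circ A^{1/2}B_{d+1}^\intercal=P_{d+1}\fup d\bigl(A^{3/2}B_{d+1}^\intercal\,\cdot\,\bigr)$, and these differ.

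The repair is simpler than the square-root detour. Within your framework, conjugate the two systems on $\im(B_{d+1}^\intercal)$ by $A$ itself: substituting $x=Ay$ gives $\dot y=(A|_{\im(B_{d+1}^\intercal)})^{-1}A\fup d(Ay)=P_{d+1}\fup d(Ay)$, since $(A|_{\im(B_{d+1}^\intercal)})^{-1}A=P_{d+1}$ as an operator on all of $C_{d+1}(X)$ --- this is where the projection honestly comes from. Equivalently (and this is the paper's route), use the bijection $B_{d+1}|_{\im(B_{d+1}^\intercal)}\colon\im(B_{d+1}^\intercal)\to\im(B_{d+1})$ directly and the identity $\gup d\circ B_{d+1}=B_{d+1}\circ\bigl(P_{d+1}\fup d B_{d+1}^\intercal B_{d+1}\bigr)$, which follows from $B_{d+1}P_{d+1}=B_{d+1}$.
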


\begin{proof}
By Lemma \ref{spilttingseverywhere} we have the splitting
\[C_{d}(X) =  \im(B_{d+1}) \oplus \ker(B_{d+1}^\intercal) \, ,\]
so that the restriction of $B_{d+1}^\intercal$ to $\im(B_{d+1})$ is injective. Hence, we may view $B_{d+1}^\intercal |_{\im(B_{d+1})}$ as a bijection between $\im(B_{d+1})$ and $\im(B_{d+1}^\intercal)$. As such, we claim that it conjugates $B_{d+1}\fup d B_{d+1}^\intercal$ and $B_{d+1}^{\intercal}B_{d+1}\fup d$. A straightforward calculation indeed shows that
\begin{align}
(B_{d+1}^\intercal)(B_{d+1}\fup d B_{d+1}^\intercal) = B_{d+1}^\intercal B_{d+1}\fup d B_{d+1}^\intercal = (B_{d+1}^{\intercal}B_{d+1}\fup d)(B_{d+1}^\intercal)\, .
\end{align}
Likewise, the splitting 
\[C_{d+1}(X) = \im(B_{d+1}^\intercal) \oplus \ker(B_{d+1})  \]
tells us that $B_{d+1}|_{\im(B_{d+1}^\intercal) }$ induces a bijection between $\im(B_{d+1}^\intercal)$ and $\im(B_{d+1})$. We see that 
\begin{align}
(B_{d+1}\fup d B_{d+1}^\intercal)(B_{d+1}) = B_{d+1}\fup d B_{d+1}^\intercal B_{d+1} = (B_{d+1})(P_{d+1}\fup d B_{d+1}^{\intercal}B_{d+1})\, ,
\end{align}
where in the last step we have used that $B_{d+1}P_{d+1} = B_{d+1}$, as $P_{d+1}$ is the projection along the kernel of $B_{d+1}$. This shows that  $B_{d+1}|_{\im(B_{d+1}^\intercal) }$ indeed conjugates the system $P_{d+1}\fup d B_{d+1}^{\intercal}B_{d+1}$ on $\im(B_{d+1}^\intercal)$ to the system $B_{d+1}\fup d B_{d+1}^\intercal$ on $\im(B_{d+1})$. \\
The statements on $\gdown d(\theta)$ follow in precisely the same way, where the role of the map $B_{d+1}$ is now played by $B_d^{\intercal}$.
\end{proof}

\begin{remk}\label{remarkonpfp2}
We may replace $\fup d$ in $\gup d = B_{d+1}\fup d B_{d+1}^\intercal$ by its canonical representative $P_{d+1}\fup d P_{d+1}$. Lemma \ref{conjugacieseverywhe} then tells us that the restriction of $\gup d$ to $\im(B_{d+1})$ is conjugate to  
\begin{align}
&\dot{x} = (B_{d+1}^{\intercal}B_{d+1})(P_{d+1}\fup d P_{d+1})(x)  \, \text{ and } \\ \nonumber
&\dot{y} = P_{d+1}^2\fup d P_{d+1}(B_{d+1}^{\intercal}B_{d+1}y) = (P_{d+1}\fup d P_{d+1})(B_{d+1}^{\intercal}B_{d+1})(y) \, .
\end{align}

\noindent As both $P_{d+1}$ and $B_{d+1}^{\intercal}$ map into $\im(B_{d+1}^\intercal)$, we may further write these two systems on $\im(B_{d+1}^\intercal)$ as
\[\dot{x} = (B_{d+1}^{\intercal}B_{d+1})_{\im(B_{d+1}^\intercal)}(P_{d+1}\fup d P_{d+1})(x)  \quad \text{ and } \quad \dot{y} =  (P_{d+1}\fup d P_{d+1})_{\im(B_{d+1}^\intercal)}(B_{d+1}^{\intercal}B_{d+1})(y) \, ,\]
where the two terms $(B_{d+1}^{\intercal}B_{d+1})$ and $(P_{d+1}\fup d P_{d+1})$ are now both seen as maps from $\im(B_{d+1}^\intercal)$ to itself. If we replace $\fup d$ by its class  under $\sim^{\uparrow}_{d+1}$ given by Equation \eqref{easierformqfq} for some 
 $f: \im(B_{d+1}^\intercal) \rightarrow \im(B_{d+1}^\intercal)$, then we simply get the systems
\[\dot{x} = \lup {d+1} f(x)  \quad \text{ and } \quad \dot{y} =  f(\lup {d+1} y) \, .\]
Here we have set 
\[ \lup {D}:= (B_{D}^{\intercal}B_{D})_{\im(B_{D}^\intercal)}: \im(B_{D}^\intercal) \rightarrow \im(B_{D}^\intercal) \, \text{ for all } D. \]
We claim that $\lup {D}$ is a symmetric, positive definite matrix. It is clear the matrix is indeed symmetric. Moreover, if $\lup {D}v = \lambda v$ for some $v \in \im(B_{D}^\intercal)$ and $\lambda \in \R$, then 
\begin{align}\label{calcwithnorm}
\lambda \|v\| = \langle \lambda v, v \rangle = \langle \lup {D}v, v \rangle =  \langle B_{D}^{\intercal}B_{D}v, v \rangle = \langle B_{D}v, B_{D}v \rangle = \| B_{D}v\| \geq 0\, ,
\end{align}
which shows that $\lambda$ is non-negative. If in fact $\lambda = 0$, then we have $v \in \ker(B_D)$ by Equation \eqref{calcwithnorm}. However, as we also have $v \in \im(B_{D}^\intercal)$, we conclude by Lemma \ref{spilttingseverywhere} that $v=0$. Hence, $\lup {D}$ is indeed positive definite. Heuristically, $\lup {D}$ should be thought of as the invertible part of the positive semi-definite matrix $B_{D}^{\intercal}B_{D}: C_D(X) \rightarrow C_D(X)$. \\

\noindent Completely analogues, we find that $\gdown d = B_{d}^\intercal\fdown d B_{d}$ restricted to $\im(B_{d}^\intercal)$ is conjugate to the systems
\[\dot{x} = \ldown d g(x) \quad \text{ and } \quad \dot{y} = g(\ldown d y) \, \text{ on } \im(B_{d}), \]
where $g: \im(B_{d}) \rightarrow \im(B_{d})$ is chosen so that $\fdown d$ is in the same class under $\sim^{\downarrow}_{d-1}$ as Equation  \eqref{easierformqfq2}, and where we have set
\[ \ldown {d}:= (B_d B_{d}^{\intercal})_{\im(B_{d})}: \im(B_{d}) \rightarrow \im(B_{d}) \, \text{ for all } d. \]
It follows again that $\ldown {d}$ is symmetric positive definite, and can be seen as the invertible part of $B_d B_{d}^{\intercal}: C_{d-1}(X) \rightarrow  C_{d-1}(X)$. \hfill $\triangle$
\end{remk}

\noindent The following result is directly motivated by remark \eqref{remarkonpfp2}. It relates the `input maps'   $f: \im(B_{d+1}^\intercal) \rightarrow \im(B_{d+1}^\intercal)$ and $g: \im(B_{d}) \rightarrow \im(B_{d})$ to the effective dynamics $\lup {d+1} f$ and  $\ldown d g$. Similar results apply to the conjugate vector fields $g \circ \ldown d$ and $f \circ \lup {d+1}$, but the former allow for a more straightforward formulation. Recall that a symmetric matrix has only real eigenvalues.

\begin{prop}\label{fixedpointsstabili}
Let $h: \R^n \rightarrow \R^n$ be a vector field and denote by $L \in \R^{n \times n}$  a symmetric, positive definite matrix. The fixed points of $h$ are the same as those of $Lh$. Moreover, if $x^*$ is a fixed point of $h$ and if $Dh(x^*)$ is symmetric, then $D(Lh)(x^*) = LDh(x^*)$ has only real eigenvalues. In that case the number of positive, zero and negative eigenvalues of $D(Lh)(x^*)$ is the same as the number of positive, zero and negative eigenvalues of $Dh(x^*)$. 
\end{prop}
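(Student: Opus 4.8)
The plan is to prove the three assertions in order, since each is essentially elementary linear algebra once the symmetry/positive-definiteness hypotheses are exploited correctly.

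First, the statement about fixed points: since $L$ is positive definite, it is in particular invertible, so $Lh(x^*) = 0$ if and only if $h(x^*) = 0$. Hence the fixed-point sets coincide. This step is immediate and requires no work beyond invoking invertibility.

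Second, for the spectrum of $D(Lh)(x^*) = L\,Dh(x^*)$, write $S := Dh(x^*)$, which by hypothesis is symmetric. The key observation is that the product of a positive-definite matrix and a symmetric matrix is similar to a symmetric matrix: since $L$ is symmetric positive definite, it has a symmetric positive-definite square root $L^{1/2}$, and
\[
L^{1/2}(LS)L^{-1/2} = L^{1/2} S L^{1/2},
\]
which is symmetric. Therefore $LS$ is similar to the symmetric matrix $L^{1/2}SL^{1/2}$, and in particular has only real eigenvalues. This handles the ``real eigenvalues'' claim.

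Third, for the count of positive, zero and negative eigenvalues (the inertia), I would again use $M := L^{1/2} S L^{1/2}$. On one hand, $M$ is similar to $LS$, so they have the same eigenvalues with multiplicity, hence the same inertia. On the other hand, $M = L^{1/2} S L^{1/2} = (L^{1/2})^\intercal S L^{1/2}$ is congruent to $S$, so by Sylvester's law of inertia $M$ and $S$ have the same number of positive, zero and negative eigenvalues. Chaining these two facts gives that $LS = D(Lh)(x^*)$ and $S = Dh(x^*)$ have the same inertia, which is exactly the claim. I would state Sylvester's law of inertia explicitly (or cite~\cite[Section 4.5]{Horn.Johnson.book}) since it is the one nontrivial ingredient. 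The main (minor) obstacle is simply being careful that ``similar'' gives equality of eigenvalues while ``congruent'' gives equality of inertia, and that both relations apply to the same auxiliary matrix $L^{1/2}SL^{1/2}$; there is no real analytic difficulty here, as everything is finite-dimensional and the hypotheses were chosen precisely to make these standard tools apply.
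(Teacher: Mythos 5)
Your proof is correct and follows essentially the same route as the paper's: invertibility of $L$ for the fixed-point claim, the symmetric square root $L^{1/2}$ to exhibit $LS$ as similar to the symmetric matrix $L^{1/2}SL^{1/2}$, and Sylvester's law of inertia applied to the congruence $L^{1/2}SL^{1/2} = (L^{1/2})^\intercal S L^{1/2}$. No gaps; the separation of ``similar gives eigenvalues, congruent gives inertia'' is exactly the content of the paper's argument.
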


\noindent The proof of Proposition \ref{fixedpointsstabili} uses \emph{Sylvester's law of inertia}~\cite[Theorem 4.5.8]{Horn.Johnson.book}. It states that for any symmetric matrix $A \in \R^{n \times n}$ and any invertible matrix $S \in \R^{n \times n}$, the symmetric matrices $A$ and $SAS^{\intercal}$ have the same number of positive, zero and negative eigenvalues. 

\begin{proof}[Proof of Proposition \ref{fixedpointsstabili}]
As $L$ is invertible, it is clear that the fixed points of $h$ are the same as those of $Lh$. From the fact that $L$ is positive definite, it follows that an invertible matrix $\sqrt{L}$ exists such that $\sqrt{L}^2 = L$.  Now if $Dh(x^*)$ is symmetric, it follows from Sylvester's law of inertia that  the symmetric matrix $\sqrt{L}Dh(x^*)\sqrt{L}$ has the same number of positive, zero and negative eigenvalues as $Dh(x^*)$. Hence, the matrix $LDh(x^*) = \sqrt{L}(\sqrt{L}Dh(x^*)\sqrt{L})\sqrt{L}^{-1}$ has only real eigenvalues, with the same number of positive, zero and negative ones as $Dh(x^*)$. This completes the proof.
\end{proof}

\begin{remk}
If $\fup d$ acts componentwise, then the Jacobian $D\fup d(\theta)$ is diagonal and therefore symmetric at any point $\theta \in C_d(X)$. Moreover, If $\fup d$ is equivalent under $\sim^{\uparrow}_d$ to Equation \eqref{easierformqfq} for some 
 $f: \im(B_{d+1}^\intercal) \rightarrow \im(B_{d+1}^\intercal)$, then with slight abuse of notation, we may write $f = P_{d+1} \fup d P_{d+1}$. It follows that the Jacobian of $f$ is likewise symmetric at any point in its phase space. We conclude that Proposition \ref{fixedpointsstabili} applies for componentwise $\fup d$. The analogous result of course holds when $\fdown d$ acts componentwise. \hfill $\triangle$
\end{remk}
\begin{cor}\label{cor:tree}
Suppose we have $\ker(B_{d+1}) = 0$. In other words, there are no non-zero $d+1$-cycles. Then, there is a bijection between the fixed points of $\gup d|_{\im(B_{d+1})}: \im(B_{d+1}) \rightarrow \im(B_{d+1})$ and  those of $\fup d$. If $\fup d$ is moreover componentwise, then this bijection preserves stability, i.e. the number of  positive, zero and negative eigenvalues of the corresponding linearized system. 
\end{cor}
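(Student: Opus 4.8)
The plan is to reduce the corollary to a single application of Proposition~\ref{fixedpointsstabili}, pre- and post-composed with the linear conjugacy of Remark~\ref{remarkonpfp2}, exploiting the hypothesis $\ker(B_{d+1}) = 0$ to trivialize the bookkeeping. First I would record what $\ker(B_{d+1}) = 0$ buys us. By the Rank--Nullity theorem (equivalently, by the splitting $C_{d+1}(X) = \im(\bt{d+1}) \oplus \ker(B_{d+1})$ of Lemma~\ref{spilttingseverywhere}), we get $\im(\bt{d+1}) = C_{d+1}(X)$, hence $P_{d+1} = I_{d+1}$. Two things follow: the canonical representative $P_{d+1}\fup d P_{d+1}$ of $\fup d$ is just $\fup d$ itself, and $\lup{d+1} = (\bt{d+1}B_{d+1})_{\im(\bt{d+1})}$ is simply $\bt{d+1}B_{d+1}$ acting on all of $C_{d+1}(X)$. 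Also $\dim\im(B_{d+1}) = \dim C_{d+1}(X)$, so the asserted bijection is at least dimensionally consistent.

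Next I would invoke Lemma~\ref{conjugacieseverywhe}/Remark~\ref{remarkonpfp2}: the linear isomorphism $\bt{d+1}|_{\im(B_{d+1})}\colon \im(B_{d+1}) \to \im(\bt{d+1}) = C_{d+1}(X)$ conjugates $\gup d|_{\im(B_{d+1})}$ to the vector field $\dot x = \lup{d+1}\fup d(x) = (\bt{d+1}B_{d+1})\fup d(x)$ on $C_{d+1}(X)$. Since this conjugacy is linear, it carries equilibria to equilibria bijectively and, at corresponding equilibria, relates the two Jacobians by a similarity transformation, hence preserves the entire eigenvalue list (in particular the counts of positive, zero, and negative eigenvalues). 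Then I would apply Proposition~\ref{fixedpointsstabili} with $L = \lup{d+1}$ — symmetric and positive definite by the computation in Remark~\ref{remarkonpfp2} — and $h = \fup d$: this gives at once that the fixed points of $\lup{d+1}\fup d$ coincide with those of $\fup d$. Chaining the two steps produces the claimed bijection between the fixed points of $\gup d|_{\im(B_{d+1})}$ and those of $\fup d$.

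For the ``moreover'' clause, when $\fup d$ is componentwise its Jacobian $D\fup d(x^*)$ is diagonal, hence symmetric, so the stability part of Proposition~\ref{fixedpointsstabili} applies and shows $D(\lup{d+1}\fup d)(x^*)$ has only real eigenvalues, with the same inertia as $D\fup d(x^*)$. Transporting this through the eigenvalue-preserving linear conjugacy of the previous paragraph yields the corresponding statement for $D(\gup d|_{\im(B_{d+1})})$ at the matching fixed point, completing the proof. I do not anticipate a genuine obstacle: the only points needing care are (a) verifying explicitly that $\ker(B_{d+1}) = 0$ collapses $P_{d+1}$ to the identity and $\lup{d+1}$ to $\bt{d+1}B_{d+1}$ on the whole of $C_{d+1}(X)$, so that the conjugate system in Remark~\ref{remarkonpfp2} is literally $\lup{d+1}\fup d$ rather than a restriction thereof, and (b) stating cleanly that a linear change of coordinates preserves both the equilibrium set and the spectrum of the linearization, so that the conjugacy and Proposition~\ref{fixedpointsstabili} can be composed without loss.
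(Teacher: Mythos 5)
Your proposal is correct and follows exactly the route the paper intends: since $\ker(B_{d+1})=0$ forces $\im(B_{d+1}^\intercal)=C_{d+1}(X)$ and $P_{d+1}=I_{d+1}$, Lemma~\ref{conjugacieseverywhe}/Remark~\ref{remarkonpfp2} conjugate $\gup d|_{\im(B_{d+1})}$ to $\lup{d+1}\fup d$ on all of $C_{d+1}(X)$, and Proposition~\ref{fixedpointsstabili} (with the observation that a componentwise $\fup d$ has diagonal, hence symmetric, Jacobian) finishes the argument. The two points you flag for care --- that the conjugate system is literally $\lup{d+1}\fup d$ rather than a restriction, and that a linear conjugacy preserves both the equilibrium set and the spectrum of the linearization --- are exactly the right ones, and both are handled correctly.
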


\begin{remark}
  The results of Corollary~\ref{cor:tree} are well-known in the literature on consensus systems; basically any consensus system on a tree will converge, since there are no loops to break ``local convergence''.
\end{remark}

Finally, we address the question of when a system like~\eqref{eq:1'} can be written as the gradient of a potential system, as a condition on $\fup n$ and $\fdown n$.   

\begin{definition}
For any vector field $F\colon \R^n\to\R^n$ written in coordinates, we write $JF$ as the $n\times n$ matrix with components
\begin{equation*}
  \left( JF(x)\right)_{ij} = \left( \frac{\partial F_i}{\partial x_j}(x)\right)
\end{equation*}
\end{definition}

\begin{prop}
  The vector field $G_d(\theta_d)$ is exact if and only if the canonical representatives $P_{d+1}\fup d P_{d+1}$ and $Q_{d-1}\fdown d Q_{d-1}$ are exact.
\end{prop}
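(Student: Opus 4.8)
The plan is to split $G_d$ along the triple decomposition into two decoupled pieces, one on $\im(B_d^{\intercal})$ and one on $\im(B_{d+1})$, and then to recognize that each piece is obtained from the corresponding canonical representative by a \emph{congruence} transformation $A\mapsto SAS^{*}$ rather than a conjugacy; a congruence by an invertible map preserves symmetry of the Jacobian, hence exactness. Throughout I would use that, on a vector space (which is convex, hence simply connected), a $C^{1}$ vector field $F$ is exact iff its Jacobian $JF(\theta)$, computed in an orthonormal basis, is symmetric for every $\theta$ (Poincar\'e's lemma). Two consequences: \emph{(a)} if $E = E_{1}\oplus E_{2}$ is an \emph{orthogonal} decomposition and $F=F_{1}\oplus F_{2}$ respects it (with $F_{i}\colon E_{i}\to E_{i}$), then in an adapted orthonormal basis $JF$ is block diagonal, so $F$ is exact iff each $F_{i}$ is; in particular an identically zero summand does not affect exactness. \emph{(b)} If $S\colon E\to E'$ is a linear isomorphism with adjoint $S^{*}\colon E'\to E$ and $F\colon E\to E$, then $J(S\circ F\circ S^{*})(\theta)=S\,JF(S^{*}\theta)\,S^{*}$, which (as $S,S^{*}$ are invertible) is symmetric iff $JF(S^{*}\theta)$ is; since $S^{*}$ is onto, the vector field $S\circ F\circ S^{*}$ on $E'$ is exact iff $F$ is exact on $E$.

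Next I would invoke the triple decomposition of Lemma~\ref{lem:decomp}. As already noted in the proof of Proposition~\ref{thm:conj}, with respect to the orthogonal splitting $C_{d}(X)=\im(B_{d}^{\intercal})\oplus\im(B_{d+1})\oplus W_{d}$ one has $G_{d}=\gdown d|_{\im(B_{d}^{\intercal})}\oplus\gup d|_{\im(B_{d+1})}\oplus\0_{W_{d}}$, because $\gdown d$ only depends on $B_{d}\theta$ (so is independent of $\ker(B_{d})=\im(B_{d+1})\oplus W_{d}$) and has image in $\im(B_{d}^{\intercal})$, and dually for $\gup d$. By fact \emph{(a)}, $G_{d}$ is exact iff both $\gdown d|_{\im(B_{d}^{\intercal})}$ and $\gup d|_{\im(B_{d+1})}$ are.

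It remains to match each restriction to the corresponding canonical representative; I would carry this out for the ``up'' case, the ``down'' case being identical with $B_{d}^{\intercal}$ and $Q_{d-1}$ in place of $B_{d+1}$ and $P_{d+1}$. Set $\Phi:=P_{d+1}\fup d P_{d+1}$, so that $\gup d=B_{d+1}\Phi B_{d+1}^{\intercal}$ by~\eqref{eq:3=5}; by the Remark following Lemma~\ref{lem:1-to-1}, $\Phi$ equals $f\oplus\0$ with respect to the decomposition $C_{d+1}(X)=\im(B_{d+1}^{\intercal})\oplus\ker(B_{d+1})$, for a unique $f\colon\im(B_{d+1}^{\intercal})\to\im(B_{d+1}^{\intercal})$, so by \emph{(a)} $\Phi$ is exact iff $f$ is. By Lemma~\ref{lem:decomp} the restriction $S:=B_{d+1}|_{\im(B_{d+1}^{\intercal})}\colon\im(B_{d+1}^{\intercal})\to\im(B_{d+1})$ is an isomorphism, and a one-line inner-product computation gives $S^{*}=B_{d+1}^{\intercal}|_{\im(B_{d+1})}$ (equivalently $S^{*}S=\lup{d+1}$). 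Then for $\theta\in\im(B_{d+1})$ one computes $\gup d(\theta)=B_{d+1}\,f(B_{d+1}^{\intercal}\theta)=S\,f(S^{*}\theta)$, using that both $B_{d+1}^{\intercal}\theta$ and $f(B_{d+1}^{\intercal}\theta)$ lie in $\im(B_{d+1}^{\intercal})$. Hence $\gup d|_{\im(B_{d+1})}=S\circ f\circ S^{*}$, which by \emph{(b)} is exact iff $f$ — equivalently $\Phi=P_{d+1}\fup d P_{d+1}$ — is exact. The dual argument gives that $\gdown d|_{\im(B_{d}^{\intercal})}$ is exact iff $Q_{d-1}\fdown d Q_{d-1}$ is, and combining the three equivalences finishes the proof.

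The step requiring care — and the reason the statement is true at all — is that last identification. It is tempting to transport $\gup d$ to $f$ using the conjugacies of Lemma~\ref{conjugacieseverywhe}, but conjugation by a non-orthogonal isomorphism does not preserve symmetry of the Jacobian and therefore cannot preserve exactness (one can easily write down an exact planar vector field whose conjugate by a diagonal matrix is not exact). What saves the argument is that the correct identification is the congruence $\gup d|_{\im(B_{d+1})}=S\,f\,S^{*}$, so the one genuine thing to verify is that the outer $B_{d+1}^{\intercal}$ acts as the adjoint $S^{*}$ of the factor $S$ — i.e. that $S^{*}S=\lup{d+1}$ — rather than as $S^{-1}$.
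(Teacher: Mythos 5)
Your proof is correct and follows essentially the same route as the paper's: both reduce exactness to symmetry of the Jacobian via Poincar\'e's lemma, split $G_d$ along the triple decomposition, and exploit that symmetry of the Jacobian is preserved and reflected under the \emph{congruence} by $B_{d+1}$ (resp.\ $B_d^\intercal$) rather than a conjugacy. The paper packages that last step as the identity $J\gdown d - (J\gdown d)^\intercal = B_d^\intercal\bigl(J\fdown d - (J\fdown d)^\intercal\bigr)B_d$ together with the injectivity statement of Lemma~\ref{lem:1-to-1}, which is the same congruence argument you carry out after restricting to the invariant subspaces.
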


\begin{proof}
  Since $C_d(X)$ is contractible, we know that $G_d$ being closed implies that it is exact, so we check when $G_d$ is closed.  We are ultimately checking whether $JG_d$ is symmetric.
  
  Note first that $(J\gdown d-(J\gdown d)^\intercal) = B_d^\intercal (J\fdown d -(J\fdown d)^\intercal)  B_d,$ and by Remark~\ref{remarkonpfp2}, this is zero iff $Q_{d-1}(J\fdown d -(J\fdown d)^\intercal)Q_{d-1} = {\bf 0}$, and this is zero iff $Q_{d-1}(J\fdown d)  Q_{d-1}$ is symmetric.  Similarly, $J\gup d$ is symmetric iff $P_{d+1}(J\fup) d P_{d+1}$ is.  By the triple decomposition Lemma~\ref{lem:decomp}, $G_d$ is exact iff each of the two pieces $\gup d$ and $\gdown d$ are, and we are done.
\end{proof}

\section{Different orientations}\label{sec:orientation}

The signs in each map $B_d$ are determined by an orientation in the following way. We start by choosing a labelling of the vertices (i.e., the 0-simplices), after which we denote the $d$-simplex containing the vertices $v_{i_0}, v_{i_1}, \dots, v_{i_d} \in \{v_1, \dots, v_n\}$  by $[v_{i_0}, v_{i_1} \dots, v_{i_d}]$, with the convention that $i_0 < i_1 < \dots < i_d$. In this section we will simply denote the vertices by their label, so that the aforementioned $d$-simplex becomes  $[{i_0}, {i_1} \dots, {i_d}]$, still  with $i_0 < i_1 < \dots < i_d$. The boundary operator $\partial_d $ is then given by
\begin{align}\label{bound}
\partial_d  [i_0, i_1 \dots, i_d] = \sum_{k=0}^d (-1)^k [i_0, i_1, \dots, \widehat{i_k}, \dots, i_d]\, ,
\end{align}
where the symbol $\widehat{i_k}$  indicates this vertex is left out. Note that the vertices in $[i_0, i_1, \dots, \widehat{i_k}, \dots, i_d]$ are again in increasing order. \\

\noindent It is clear that the signs in Equation \eqref{bound} are a consequence of the original labelling of the vertices. Consider for instance the $2$-simplex $t = [1,2,3]$, together with its boundary consisting of the 1-simplices $e_1 = [2,3], e_2 = [1,3]$ and $e_3 = [1,2]$. Equation \eqref{bound} now reads
\begin{align}\label{bound2}
\partial_2 t = \partial_2[1,2,3] = [2,3] - [1,3] + [1,2] = e_1 - e_2 + e_3\, .
\end{align}
However, another labelling of the vertices in the complex might for instance give a renaming 
\begin{align}
1 \mapsto 7 \qquad
2 \mapsto 4 \qquad 
3 \mapsto 8\, ,
\end{align}
(assuming at least 8 vertices are present in the entire complex). Using our formalism of writing simplices with increasing numbers, we would now write $t = [4,7,8]$, $e_1 = [4,8]$, $e_2 = [7,8]$ and $e_3 = [4,7]$.  Equation \eqref{bound} then becomes
\begin{align}\label{bound3}
\partial_2 t = \partial_2[4,7,8] = [7,8] - [4,8] + [4,7] = e_2 - e_1 + e_3 = -e_1 + e_2 + e_3\, .
\end{align}
Equations \eqref{bound2} and \eqref{bound3} do not agree, from which we see that a different labelling can give another map $\partial_d$. In the classical case of the Laplacian on the space of $0$-simplices, these differences are known to drop out in the expression $L := B_2B_2^\intercal $ (or in general in $B_2FB_2^\intercal $ if $F$ has odd components). The following examples show this is not the case for general simplices.

\begin{figure}[ht]
\centering{
\begin{tikzpicture}

    \fill[fill=blue!45](0,0)--(1.5,2.6)--(3,0)--(1.5,-2.6)--(0,0);
	\node[circle,draw=black, fill=white, fill opacity = 1, inner sep=1.5pt, minimum size=14pt] (1) at (0,0) {$1$};
	\node[circle,draw=black, fill=white, fill opacity = 1, inner sep=1.5pt, minimum size=14pt] (3) at (1.5,2.6) {$2$};
	\node[circle,draw=black, fill=white, fill opacity = 1, inner sep=1.5pt, minimum size=14pt] (2) at (3,0) {$3$};
	\node[circle,draw=black, fill=white, fill opacity = 1, inner sep=1.5pt, minimum size=14pt] (4) at (1.5,-2.6) {$4$};
	\node[ fill=none, fill opacity = 1, inner sep=1.5pt, minimum size=14pt] (t1) at (1.5,1) {$t_1$};
	\node[ fill=none, fill opacity = 1, inner sep=1.5pt, minimum size=14pt] (t2) at (1.5,-1) {$t_2$};
	\node[ fill=none, fill opacity = 1, inner sep=1.5pt, minimum size=14pt] (e1) at (0.4,1.3) {$e_1$};
	\node[ fill=none, fill opacity = 1, inner sep=1.5pt, minimum size=14pt] (e2) at (2.6,1.3) {$e_2$};	
	\node[ fill=none, fill opacity = 1, inner sep=1.5pt, minimum size=14pt] (e3) at (1.5,0.2) {$e_3$};
	\node[ fill=none, fill opacity = 1, inner sep=1.5pt, minimum size=14pt] (e4) at (0.4,-1.3) {$e_4$};
	\node[ fill=none, fill opacity = 1, inner sep=1.5pt, minimum size=14pt] (e5) at (2.6,-1.3) {$e_5$};

	   \fill[fill=blue!45](5,0)--(6.5,2.6)--(8,0)--(6.5,-2.6)--(5,0);
	\node[circle,draw=black, fill=white, fill opacity = 1, inner sep=1.5pt, minimum size=14pt] (1s) at (5,0) {$1$};
	\node[circle,draw=black, fill=white, fill opacity = 1, inner sep=1.5pt, minimum size=14pt] (3s) at (6.5,2.6) {$3$};
	\node[circle,draw=black, fill=white, fill opacity = 1, inner sep=1.5pt, minimum size=14pt] (2s) at (8,0) {$4$};
	\node[circle,draw=black, fill=white, fill opacity = 1, inner sep=1.5pt, minimum size=14pt] (4s) at (6.5,-2.6) {$2$};
	\node[ fill=none, fill opacity = 1, inner sep=1.5pt, minimum size=14pt] (t1s) at (6.5,1) {$t_1$};
	\node[ fill=none, fill opacity = 1, inner sep=1.5pt, minimum size=14pt] (t2s) at (6.5,-1) {$t_2$};
	\node[ fill=none, fill opacity = 1, inner sep=1.5pt, minimum size=14pt] (e1s) at (5.4,1.3) {$e_1$};
	\node[ fill=none, fill opacity = 1, inner sep=1.5pt, minimum size=14pt] (e2s) at (7.6,1.3) {$e_2$};	
	\node[ fill=none, fill opacity = 1, inner sep=1.5pt, minimum size=14pt] (e3s) at (6.5,0.2) {$e_3$};
	\node[ fill=none, fill opacity = 1, inner sep=1.5pt, minimum size=14pt] (e4s) at (5.4,-1.3) {$e_4$};
	\node[ fill=none, fill opacity = 1, inner sep=1.5pt, minimum size=14pt] (e5s) at (7.6,-1.3) {$e_5$};

	\draw [ line width=0.7mm, >=stealth,  black] (1) to [bend right = 0] (2);
	\draw [ line width=0.7mm, >=stealth,  black] (1) to [bend right = 0] (3);
	\draw [ line width=0.7mm, >=stealth,  black] (1) to [bend right = 0] (4);
	\draw [ line width=0.7mm, >=stealth,  black] (2) to [bend right = 0] (3);
	\draw [ line width=0.7mm, >=stealth,  black] (2) to [bend right = 0] (4);

	\draw [ line width=0.7mm, >=stealth,  black] (1s) to [bend right = 0] (2s);
	\draw [ line width=0.7mm, >=stealth,  black] (1s) to [bend right = 0] (3s);
	\draw [ line width=0.7mm, >=stealth,  black] (1s) to [bend right = 0] (4s);
	\draw [ line width=0.7mm, >=stealth,  black] (2s) to [bend right = 0] (3s);
	\draw [ line width=0.7mm, >=stealth,  black] (2s) to [bend right = 0] (4s);
\end{tikzpicture}
\caption{Two copies of the same simplicial complex, but with a different labelling of the vertices. This seemingly innocuous difference is enough to give different dynamical systems.}
\label{fig_label}}
\end{figure}
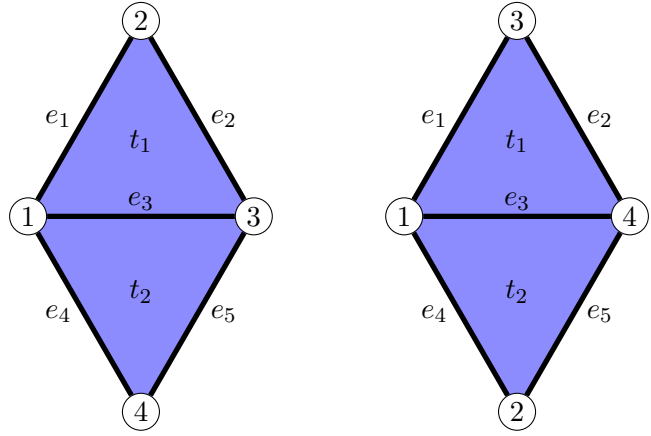

\begin{ex}\label{examlabeleffect}
We consider the simplicial complex of Figure \ref{fig_label}, where we focus on the dynamics of the edges communicating through the two triangles. We start by looking at the labelling of the vertices on the left. This gives us
\begin{align}
\partial_2(t_1) &= \partial_2 [1,2,3] = [2,3] - [1,3] + [1,2] = e_2 - e_3 + e_1 \\ \nonumber
\partial_2(t_2) &= \partial_2 [1,3,4] = [3,4] - [1,4] + [1,3] = e_5 - e_4 + e_3\, .
\end{align}
Expressed in the bases $\{t_1, t_2\}$ and $\{e_1, e_2, e_3, e_4, e_5\}$, we therefore find 
\begin{equation}
B_2 = \begin{pmatrix}
1 & 0 \\
1 & 0 \\
-1 & 1 \\
0 & -1 \\
0 & 1 \\
\end{pmatrix}\, .
\end{equation}
Let us assume $\fup1: C^2(X) \rightarrow C^2(X)$ is componentwise, given by $\fup1(x_{t_1}, x_{t_2}) = (f(x_{t_1}), f(x_{t_2}))^\intercal $ for some odd function $f: \R \rightarrow \R$. We obtain the vector field
\begin{align}\label{leftODEtriangles}
B_2\fup1B^\intercal _2(x) &= \left(\begin{array}{l}
f(x_1 + x_2 - x_3) \\
f(x_1 + x_2 - x_3)  \\
-f(x_1 + x_2 - x_3) + f(x_3 - x_4 + x_5) \\
-f(x_3 - x_4 + x_5) \\
f(x_3 - x_4 + x_5) \\
\end{array} \right) \\ \nonumber
&= \left(\begin{array}{l}
f(x_1 + x_2 - x_3) \\
f(x_2 + x_1 - x_3)  \\
f(x_3 - x_1 - x_2 ) + f(x_3 - x_4 + x_5) \\
f(x_4 -x_3 - x_5) \\
f(x_5 + x_3 - x_4 ) \\
\end{array} \right) \, .
\end{align}
Here we write $x_1 := x_{e_1}$ for the coordinate in $e_1$, and so forth. If we instead look at the labelling of the vertices on the right of Figure \ref{fig_label}, we obtain
\begin{align}
\partial_2(t_1) &= \partial_2 [1,3,4] = [3,4] - [1,4] + [1,3] = e_2 - e_3 + e_1 \\ \nonumber
\partial_2(t_2) &= \partial_2 [1,2,4] = [2,4] - [1,4] + [1,2] = e_5 - e_3 + e_4\, .
\end{align}
Hence, we instead find
\begin{equation}
B_2 = \begin{pmatrix}
1 & 0 \\
1 & 0 \\
-1 & -1 \\
0 & 1 \\
0 & 1 \\
\end{pmatrix}\, .
\end{equation}
As a result, our vector field becomes
\begin{align}\label{rightODEtriangles}
B_2\fup1B^\intercal _2(x) &= \left(\begin{array}{l}
f(x_1 + x_2 - x_3) \\
f(x_1 + x_2 - x_3)  \\
-f(x_1 + x_2 - x_3) - f(-x_3 + x_4 + x_5) \\
f(-x_3 + x_4 + x_5) \\
f(-x_3 + x_4 + x_5) \\
\end{array} \right) \\ \nonumber
&= \left(\begin{array}{l}
f(x_1 + x_2 - x_3) \\
f(x_2 + x_1 - x_3)   \\
f(x_3 - x_1 - x_2) + f(x_3 - x_4 - x_5) \\
f(x_4 -x_3 + x_5)\\
f(x_5 -x_3 + x_4) \\
\end{array} \right) \, .
\end{align}
Comparing equations \eqref{leftODEtriangles} and \eqref{rightODEtriangles}, we see that they are not the same. In particular, note that the vector field of Equation \eqref{rightODEtriangles} has the invariant space $\{x_4 = x_5\}$, for any choice of $f$, whereas for a generic choice of (odd) function $f$ this is not an invariant space for the vector field of Equation \eqref{leftODEtriangles}.  \hfill $\triangle$
\end{ex}

\begin{ex}\label{examlabeleffect2}
We return to Figure \ref{fig_label} , where we instead look at the dynamics of the triangles, communicating through their mutually adjacent edges. We again assume $\fdown2$ to be componentwise, with components given by an odd function $f:\R \rightarrow \R$. The labelling on the left of Figure \ref{fig_label} gives
\begin{align}
B_2^\intercal \fdown2B_2(x) &= \left(\begin{array}{l}
f(x_1) + f(x_1)- f(-x_1+x_2) \\ \nonumber
f(-x_1+x_2) - f(-x_2) + f(x_2) 
\end{array} \right) \\ \nonumber
&= \left( \begin{array}{l}
2f(x_1) +  f(x_1 - x_2) \\ \nonumber
2f(x_2) + f(x_2 - x_1) 
\end{array}  \right) \, ,
\end{align}
where $x_i$ denotes the dynamics of triangle $t_i$, $i \in \{1,2\}$.  The labelling on the right of Figure \ref{fig_label} gives
\begin{align}
B_2^\intercal \fdown2B_2(x)  &= \left(\begin{array}{l}
f(x_1) + f(x_1)- f(-x_1-x_2) \\ \nonumber
-f(-x_1-x_2) + f(x_2) + f(x_2) 
\end{array} \right) \\ \nonumber
&= \left( \begin{array}{l}
2f(x_1) +  f(x_1 + x_2) \\ \nonumber
2f(x_2) + f(x_2 + x_1) 
\end{array}  \right) \, .
\end{align}
Again, we find different vector fields. \hfill $\triangle$
\end{ex}

\noindent Even though examples \ref{examlabeleffect} and \ref{examlabeleffect2} show that the vector fields $ \gup d$ and $ \gdown d$ may depend on the labelling of the vertices, we will see in Theorem \ref{mainthronlabel} below that different choices in fact give conjugate systems. To this end, we first define:

\begin{defi}\label{signdefff}
Let $i_0<i_1< \dots<i_d \in \{1, \dots, n\}$ be $d+1$ distinct numbers, and let $\sigma:  \{1, \dots, n\} \rightarrow  \{1, \dots, n\}$ be a permutation. The sign of the set $A = \{i_0, \dots, i_d\}$ with respect to $\sigma$ is the number $\sgn(A, \sigma) \in \{1, -1\}$ defined as follows: The numbers $i_0$ through $i_d$ are in ascending order, but the numbers $\sigma(i_0)$ through $\sigma(i_d)$ need not be. However, there exists a unique permutation $\tau:   \{0, \dots, d\} \rightarrow  \{0, \dots, d\}$ such that $\sigma(i_{\tau(0)})<\sigma(i_{\tau(1)})< \dots<\sigma(i_{\tau(d)})$. We then simply set $\sgn(A, \sigma) := \sgn(\tau)$. In other words, the number $\sgn(A, \sigma)$ tells us if $\sigma$ rearranges the elements of $A$ as an even or an odd permutation. \\
Next, suppose we have fixed a labelling on the vertices of a simplicial complex $X$. For every value of $d$ we define a linear map $T_{\sigma}^d : C_d(X) \rightarrow C_d(X)$ by simply setting $(T_{\sigma}^d(x))_s = \sgn(A_s, \sigma)x_s$, where $A_s = \{i_0, \dots, i_d\}$ if $s$ is the $d$-simplex $s = [i_0, i_1, \dots, i_d]$. Note that $T_{\sigma}^d$ is a diagonal map with only $1$s and $-1$s on the diagonal. In particular, we see that $T_{\sigma}^d$ equals its own inverse. \hfill $\triangle$
\end{defi} 

\begin{remk}
It is well-known that the sign of a permutation $\tau$ equals $1$ if there is an even number of pairs $(k,l) \in \{1, \dots, n\}^2$ such that $k>l$ but $\tau(k) < \tau(l)$, and $-1$ if there is an odd number of such pairs. Suppose $\tau$ is determined by $\sigma$ and $A$ as in the definition of $\sgn(A, \sigma)$ above. Then for $k, l \in \{0, \dots, d\}$ we see that $k > l$ if and only if $\sigma(i_{\tau(k)})>\sigma(i_{\tau(l)})$. On the other hand, we have  $\tau(k) < \tau(l)$ if and only if $i_{\tau(k)} < i_{\tau(l)}$. Hence, we see that $ \sgn(\tau) = \sgn(A, \sigma) = 1$ precisely when there is an even number of pairs $(i,j) \in A^2$ such that $i<j$ but $\sigma(i) > \sigma(j)$, and $-1$ otherwise. \hfill $\triangle$
\end{remk}

\begin{ex}\label{Tsigmas}
We return to the complex $X$ of Figure \ref{fig_label}. The labelling on the right is obtained from the  one on the left by a permutation $\sigma$, defined by
\[\sigma(1) = 1\quad \sigma(2) = 3\quad \sigma(3) = 4\quad \sigma(4) = 2\, .\]
The numbers of the simplex $t_1$ are acted on by $\sigma$ as
\[(1,2,3) \mapsto (1,3,4) \, .\]
As the entries of $(1,3,4)$ are again in increasing order, we conclude that $\sgn(A_{t_1}, \sigma) = 1$. Likewise, for $t_2$ we find 
\[(1,3,4) \mapsto (1,4,2) \, .\]
It requires one transposition (switching $2$ and $4$) to bring $(1,4,2)$ into increasing order, and so we conclude that $\sgn(A_{t_2}, \sigma) = -1$. With regards to the ordered basis $t_1, t_2$ for $C^2(X)$ we therefore find
\begin{equation}
T_{\sigma}^2 = \begin{pmatrix}
1 & 0 \\
0 & -1 \\
\end{pmatrix}\, .
\end{equation}
Likewise, $\sigma$ respects the ordering of the edges $e_1, e_2, e_3$ and $e_4$, whereas it switches the ordering of $e_5$. With regards to the ordered basis $e_1, \dots, e_5$ we therefore find 
\begin{equation}
T_{\sigma}^1 = \begin{pmatrix}
1 & 0 & 0 & 0 & 0\\
0 & 1 & 0 & 0 & 0\\
0 & 0 & 1 & 0 & 0\\
0 & 0 & 0 & 1 & 0\\
0 & 0 & 0 & 0 & -1\\
\end{pmatrix}\, .
\end{equation}
Note that both matrices indeed square to the identity. \hfill $\triangle$
\end{ex}

\begin{thr}\label{mainthronlabel}
Let $X$ be a simplicial complex with a fixed labelling of the vertices. Suppose another labelling is given, and let $\sigma:  \{1, \dots, n\} \rightarrow  \{1, \dots, n\}$ be the permutation such that vertex $i$ in the first labelling equals vertex $\sigma(i)$ in the second labelling. Denote by $B_d$ and $\tilde{B}_d$ the linear boundary maps induced by the first and second labelling, respectively. Then we have the conjugacy relations 
\begin{align}
T_{\sigma}^{d-1} \circ B_d\fup {d-1}B_d^\intercal  &= \tilde{B}_d\fup {d-1}\tilde{B}_d^\intercal  \circ T_{\sigma}^{d-1} \, ;\\ \nonumber
T_{\sigma}^{d} \circ B_d^\intercal \fdown dB_d &= \tilde{B}_d^\intercal \fdown d \tilde{B}_d \circ T_{\sigma}^{d} \, ,
\end{align}
for all $\fup {d-1}: C_d(X) \rightarrow C_d(X)$ and $\fdown {d}: C_{d-1}(X) \rightarrow C_{d-1}(X)$ that are componentwise with odd components. In particular, a different choice of labelling gives conjugate vector fields.
\end{thr}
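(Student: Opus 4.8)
The plan is to show that the diagonal sign-change map $T_\sigma^d$ intertwines the two boundary operators in the precise sense that $\tilde B_d = T_\sigma^{d-1} B_d T_\sigma^d$ (equivalently $T_\sigma^{d-1} B_d = \tilde B_d T_\sigma^d$), and then push this relation through the nonlinear terms, using only the facts that $T_\sigma^d$ is diagonal with entries $\pm 1$, that it equals its own inverse, and that componentwise odd functions commute with diagonal $\pm 1$ conjugation.

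First I would establish the key intertwining identity. Fix a $d$-simplex $s$; in the first labelling write $s = [i_0,\dots,i_d]$ with $i_0<\dots<i_d$, so that $\partial_d s = \sum_k (-1)^k [i_0,\dots,\widehat{i_k},\dots,i_d]$. In the second labelling the same geometric simplex is $[\sigma(i_{\tau(0)}),\dots,\sigma(i_{\tau(d)})]$ where $\tau$ is the unique reordering permutation from Definition~\ref{signdefff}, so $\tilde\partial_d$ applied to this ordered simplex produces, up to the global sign $\sgn(\tau) = \sgn(A_s,\sigma)$, the same sum of faces, now each written with its own reordering sign $\sgn(A_{s'},\sigma)$ for the face $s'$. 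Carefully matching the $k$-th term of $\partial_d s$ with the corresponding term of $\tilde\partial_d$ (the face obtained by deleting vertex $i_k$), the bookkeeping of signs is exactly the statement that $\tilde B_d$ expressed in the first ordered bases equals $T_\sigma^{d-1} B_d T_\sigma^d$. This is the one genuine computation, and it is essentially the cocycle-type identity $\sgn(A_s,\sigma) = \pm \sgn(A_{s'},\sigma)$ relating the reordering sign of a simplex and of one of its faces; the remark following Definition~\ref{signdefff} (counting inversions of $\sigma$ on $A$) is the convenient tool here. I expect this step to be the main obstacle, since it requires tracking the interplay of the face-deletion sign $(-1)^k$ with the two reordering signs, but it is a finite, deterministic sign chase.

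Given the identity $\tilde B_d = T_\sigma^{d-1} B_d T_\sigma^d$, and using $(T_\sigma^d)^{-1} = T_\sigma^d$ and $(T_\sigma^d)^\intercal = T_\sigma^d$ (it is a real diagonal involution, hence symmetric), I get $\tilde B_d^\intercal = T_\sigma^d B_d^\intercal T_\sigma^{d-1}$. Then for the up-coupling term, using that a componentwise odd $\fup{d-1}$ satisfies $\fup{d-1}(T_\sigma^d z) = T_\sigma^d \fup{d-1}(z)$ for all $z$ — because conjugating by a diagonal $\pm1$ matrix sends each coordinate function $x_k \mapsto f_k(\pm x_k) = \pm f_k(x_k)$ by oddness — I compute
\begin{align*}
\tilde B_d \fup{d-1} \tilde B_d^\intercal \circ T_\sigma^{d-1}
&= T_\sigma^{d-1} B_d T_\sigma^d \,\fup{d-1}\, T_\sigma^d B_d^\intercal T_\sigma^{d-1} \circ T_\sigma^{d-1} \\
&= T_\sigma^{d-1} B_d T_\sigma^d T_\sigma^d \fup{d-1} B_d^\intercal \\
&= T_\sigma^{d-1} \circ B_d \fup{d-1} B_d^\intercal,
\end{align*}
which is the first claimed relation. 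The second relation, for $\gdown d = B_d^\intercal \fdown d B_d$, follows by the identical manipulation with the roles of $d$ and $d-1$ swapped: from $\tilde B_d = T_\sigma^{d-1} B_d T_\sigma^d$ one gets $\tilde B_d^\intercal \fdown d \tilde B_d = T_\sigma^d B_d^\intercal T_\sigma^{d-1} \fdown d T_\sigma^{d-1} B_d T_\sigma^d$, and commuting the componentwise odd $\fdown d$ past $T_\sigma^{d-1}$ and cancelling $(T_\sigma^{d-1})^2 = I$ yields $T_\sigma^d B_d^\intercal \fdown d B_d T_\sigma^d$, i.e.\ $\tilde B_d^\intercal \fdown d \tilde B_d \circ T_\sigma^d = T_\sigma^d \circ B_d^\intercal \fdown d B_d$. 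Finally, since each $T_\sigma^d$ is invertible (indeed its own inverse), these intertwining relations are genuine conjugacies, proving that the two labellings yield conjugate vector fields, as claimed.
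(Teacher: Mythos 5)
Your overall strategy is exactly the one the paper follows: isolate the intertwining identity $\tilde B_d = T_\sigma^{d-1} B_d T_\sigma^d$ as a standalone lemma (this is Lemma~\ref{lemforlabel} in the paper, stated there as $B_d\circ T_\sigma^d = T_\sigma^{d-1}\circ\tilde B_d$, which is equivalent since the $T$'s are involutions), and then derive both conjugacy relations by formal manipulation using $(T_\sigma^d)^\intercal = T_\sigma^d$, $(T_\sigma^d)^2 = \Id$, and the fact that componentwise odd maps commute with diagonal $\pm1$ matrices. Your execution of this second half is complete, correct, and matches the paper's.

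The gap is that the first half --- the only part with real content --- is described but not proved. You correctly reduce the lemma to a sign identity relating the face-deletion sign to the two reordering signs, but the identity you invoke, $\sgn(A_s,\sigma) = \pm\sgn(A_{s'},\sigma)$, is vacuous as stated (both sides are $\pm1$); the entire point is to determine \emph{which} sign. The precise statement you need, obtained by matching the term of $\partial_d s$ in which vertex $i_k$ is deleted against the term of $\tilde\partial_d$ in which the same geometric vertex is deleted (it sits in position $\tau^{-1}(k)$ in the second labelling's ordering), is
\begin{equation*}
\sgn(A_s,\sigma)\,(-1)^k\,\sgn\bigl(A_s\setminus\{i_k\},\sigma\bigr) = (-1)^{\tau^{-1}(k)} .
\end{equation*}
Proving this is where the paper spends essentially all of its effort: one sorts $(\sigma(i_0),\dots,\sigma(i_d))$ by first moving $\sigma(i_k)$ to position $\tau^{-1}(k)$ using $|k-\tau^{-1}(k)|$ adjacent transpositions, then sorting the remaining entries at a parity cost of $\sgn(A_s\setminus\{i_k\},\sigma)$, and comparing the total parity with $\sgn(A_s,\sigma)$. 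Without this argument (or an equivalent inversion count), calling the step ``a finite, deterministic sign chase'' leaves the theorem unproven: a single-sign error here would falsify both conjugacy relations, so the verification cannot be waved through.
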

The following lemma is the key step in proving Theorem \ref{mainthronlabel}.
\begin{lem} \label{lemforlabel}
Let $X$ be a simplicial complex and suppose  $\sigma$, $B_d$ and $\tilde{B}_d$ are as in Theorem \ref{mainthronlabel}. We have
\[ B_d \circ T_{\sigma}^d = T_{\sigma}^{d-1} \circ \tilde{B}_d \, .\]
\end{lem}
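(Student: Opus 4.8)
The plan is to verify the identity $B_d \circ T_\sigma^d = T_\sigma^{d-1}\circ \tilde B_d$ by evaluating both sides on a basis element $s = [i_0,\dots,i_d]$ of $C_d(X)$, where $i_0<\dots<i_d$ in the \emph{first} labelling. The left-hand side is immediate: $T_\sigma^d$ scales $s$ by $\sgn(A_s,\sigma)$, so $B_d T_\sigma^d s = \sgn(A_s,\sigma)\,B_d s = \sgn(A_s,\sigma)\sum_{k=0}^d(-1)^k[i_0,\dots,\widehat{i_k},\dots,i_d]$. The right-hand side requires me to write $\tilde B_d s$ in terms of the second labelling: I must re-express $s$ with its vertices in the order dictated by $\sigma$, apply the boundary formula \eqref{bound}, and then translate the resulting faces back into the first labelling's ordered-basis conventions so I can apply $T_\sigma^{d-1}$ and compare coefficients.

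The key bookkeeping step is the following. Let $\tau\colon\{0,\dots,d\}\to\{0,\dots,d\}$ be the unique permutation with $\sigma(i_{\tau(0)})<\dots<\sigma(i_{\tau(d)})$, so $\sgn(A_s,\sigma)=\sgn(\tau)$. In the second labelling, $s$ is the ordered simplex $[i_{\tau(0)},\dots,i_{\tau(d)}]$ (vertices ascending in the $\sigma$-labels), hence $\tilde B_d s = \sum_{m=0}^d (-1)^m [\,i_{\tau(0)},\dots,\widehat{i_{\tau(m)}},\dots,i_{\tau(d)}\,]$, where the hat removes the $m$-th entry of the $\tau$-ordered list. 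The face obtained by deleting vertex $i_{\tau(m)}$ corresponds to the set $A_s\setminus\{i_{\tau(m)}\}$; as a basis element of $C_{d-1}(X)$ (first-labelling convention) it is $\pm[i_0,\dots,\widehat{i_{\tau(m)}},\dots,i_d]$, and applying $T_\sigma^{d-1}$ multiplies it by $\sgn(A_s\setminus\{i_{\tau(m)}\},\sigma)$. So the whole computation reduces to a sign identity: for each $k$, the coefficient of $[i_0,\dots,\widehat{i_k},\dots,i_d]$ must match on both sides, i.e.
\begin{equation*}
\sgn(A_s,\sigma)\,(-1)^k \;=\; (-1)^{\tau^{-1}(k)}\,\sgn\!\bigl(A_s\setminus\{i_k\},\sigma\bigr),
\end{equation*}
where I have used that deleting $i_k$ from the $\tau$-ordered list happens at position $m=\tau^{-1}(k)$.

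The main obstacle — and really the heart of the lemma — is proving this sign identity, i.e. that $\sgn(A_s,\sigma)\,\sgn(A_s\setminus\{i_k\},\sigma)^{-1} = (-1)^{k-\tau^{-1}(k)}$. I would prove it by a counting argument using the characterization in the Remark following Definition \ref{signdefff}: $\sgn(A,\sigma)=1$ iff the number of pairs $i<j$ in $A$ with $\sigma(i)>\sigma(j)$ is even. Passing from $A_s$ to $A_s\setminus\{i_k\}$ removes exactly those inversion-pairs involving $i_k$; the number of such pairs, reduced mod $2$, is precisely $k - \tau^{-1}(k)$ mod $2$, since among the $k$ elements $i_0,\dots,i_{k-1}$ below $i_k$ and the $d-k$ elements above it, the element $i_k$ lands in position $\tau^{-1}(k)$ after $\sigma$-sorting — so $k-\tau^{-1}(k)$ of the elements originally below $i_k$ get moved above it (or vice versa), each contributing one inversion with $i_k$. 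Alternatively, and perhaps more cleanly, I would argue multiplicatively: $\sgn(A_s,\sigma)$ equals the sign of $\tau$, which factors through the transposition-style decomposition that first sorts $A_s\setminus\{i_k\}$ and then inserts $i_k$; the insertion step is a cyclic shift by $|k-\tau^{-1}(k)|$ positions, contributing exactly the sign $(-1)^{k-\tau^{-1}(k)}$. Once this sign identity is in hand, matching coefficients termwise on the chosen basis finishes the proof, and extension to all of $C_d(X)$ is by linearity. The only subtlety to handle with care is the orientation convention for the faces themselves: a face $[i_0,\dots,\widehat{i_k},\dots,i_d]$ of $s$ is already in increasing first-labelling order (no sign needed), so $T_\sigma^{d-1}$ acts on it with a clean scalar, and no hidden signs creep in from re-sorting the faces.
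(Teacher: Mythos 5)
Your proposal is correct and follows essentially the same route as the paper's proof: evaluate both sides on a basis simplex, identify the face obtained by deleting $i_k$ with the face obtained by deleting the entry in position $\tau^{-1}(k)$ of the $\sigma$-sorted list, and reduce everything to the sign identity $\sgn(A_s,\sigma)(-1)^k\sgn(A_s\setminus\{i_k\},\sigma)=(-1)^{\tau^{-1}(k)}$, which the paper also proves by the ``insert $\sigma(i_k)$ into its sorted position, then sort the rest'' transposition count that you offer as your second argument. Your first argument for the sign identity (counting inversions involving $i_k$ and noting their parity is $k-\tau^{-1}(k)$ mod $2$) is a correct minor variant, but the overall structure of the proof is the same.
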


\begin{proof}
We start by choosing a $d$-simplex $e$. In the first labelling, we write $e = [i_0, i_1, \dots, i_d]$ with $i_0 < i_1 < \dots<i_d \in \{1, \dots, n\}$. In the second labelling we write $e = [j_0, j_1, \dots, j_d]$ with $j_0 < j_1 < \dots<j_d \in \{1, \dots, n\}$. The permutation $\sigma$ is defined so that a vertex $i_k$ in the first labelling is called $\sigma(i_k)$ in the second labelling. In particular, since $e$ contains a fixed set of vertices, we necessarily have 
\[\{j_0, \dots, j_d\} = \{\sigma(i_0), \dots, \sigma(i_d)\} \]
as sets. It follows that there is some bijection $\tau: \{0, \dots, d\} \rightarrow \{0, \dots, d\}$ such that $j_k = \sigma(i_{\tau(k)})$ for all $k \in \{0, \dots, d\}$. In other words, the sequence $\sigma(i_{\tau(0)}), \sigma(i_{\tau(1)}) \dots, \sigma(i_{\tau(d)})$ is increasing by definition of $\tau$. \\
Using this notation, we proceed to compare $B_d \circ T_{\sigma}^d$ to $T_{\sigma}^{d-1} \circ \tilde{B}_d$. Note that, since $T_{\sigma}^{d-1}$ and $T_{\sigma}^{d}$ both square to the identity, it suffices to show that $T_{\sigma}^{d-1} \circ B_d \circ T_{\sigma}^d = \tilde{B}_d$. For simplicity, we will identify simplices with their corresponding basis elements in the vector spaces $C^d(X)$ and $C^{d-1}(X)$. \\
We start with $T_{\sigma}^{d-1} \circ B_d \circ T_{\sigma}^d$. For the $d$-simplex $e$ we get
\begin{align}\label{signsofi}
T_{\sigma}^{d-1}  B_d  T_{\sigma}^d(e) &= T_{\sigma}^{d-1}  B_d \sgn(A_e, \sigma) e \\ \nonumber
&= \sgn(A_e, \sigma)T_{\sigma}^{d-1}  \partial_d [i_0, i_1 \dots, i_d]  \\ \nonumber
&= \sgn(A_e, \sigma)T_{\sigma}^{d-1}\sum_{k=0}^d (-1)^k [i_0, i_1, \dots, \widehat{i_k}, \dots, i_d]  \\ \nonumber
&= \sum_{k=0}^d\sgn(A_e, \sigma)(-1)^k\sgn(A_e\setminus \{i_k\}, \sigma) [i_0, i_1, \dots, \widehat{i_k}, \dots, i_d] \, .
\end{align}
On the other hand, we have
\begin{align}\label{signsofj}
 \tilde{B}_d(e) &=   \partial_d [j_0, j_1 \dots, j_d] = \sum_{k=0}^d (-1)^k [j_0, j_1, \dots, \widehat{j_k}, \dots, j_d]  \, .
\end{align}
Now, recall that vertex $i_k$ in the first labelling equals vertex $\sigma(i_k)$ in the second labelling. Moreover, we have defined $\tau$ such that $j_l = \sigma(i_{\tau(l)})$ for all $l \in \{0, \dots, d\}$. It follows that $j_{\tau^{-1}(k)} = \sigma(i_{\tau(\tau^{-1}(k))}) = \sigma(i_k)$. Hence, vertex $i_k$, using the first labelling, is the same as vertex $j_{\tau^{-1}(k)}$ in the second labelling. In particular, the $(d-1)$-simplex $[i_0, i_1, \dots, \widehat{i_k}, \dots, i_d]$ is the same object as $[j_0, j_1, \dots, \widehat{j_{\tau^{-1}(k)}}, \dots, j_d]$. In Equation \eqref{signsofi} this simplex is given the sign $\sgn(A_e, \sigma)(-1)^k\sgn(A_e\setminus \{i_k\}, \sigma)$, whereas in Equation \eqref{signsofj} it is given the sign $(-1)^{{\tau^{-1}(k)}}$. This means the proof is done once we show that 
\begin{equation}\label{usefulidentitytau}
 \sgn(A_e, \sigma)(-1)^k\sgn(A_e\setminus \{i_k\}, \sigma)  = (-1)^{{\tau^{-1}(k)}}\, .
 \end{equation}
To this end, note that we wish to switch elements so that $(\sigma(i_0), \dots, \sigma(i_d))$ becomes \break $(\sigma(i_{\tau(0)}), \dots, \sigma(i_{\tau(d)}))$. If this can be done by switching $m$ pairs, then by definition $\sgn(A_e, \sigma) = (-1)^m$. We first bring $\sigma(i_k)$ to the right spot, without changing the ordering of the other elements $\sigma(i_l)$, $l\not=k$, among themselves. To this end, note that the identity $j_{\tau^{-1}(k)} = \sigma(i_k)$ simply says that $\sigma(i_k)$ has to end up on place $\tau^{-1}(k)$. Assuming $k \leq \tau^{-1}(k)$ we make the following $\tau^{-1}(k)- k$ switches:
\begin{align*}
&(\sigma(i_0), \dots, \sigma(i_{k-1}), \textcolor{red}{\sigma(i_k), \sigma(i_{k+1})}, \sigma(i_{k+2}), \dots, \sigma(i_{\tau^{-1}(k)}), \dots, \sigma(i_d)) \\ \nonumber
\mapsto &(\sigma(i_0), \dots, \sigma(i_{k-1}), \sigma(i_{k+1}),\textcolor{red}{\sigma(i_{k}), \sigma(i_{k+2})}, \dots, \sigma(i_{\tau^{-1}(k)}), \dots, \sigma(i_d)) \\ \nonumber
\mapsto &(\sigma(i_0), \dots, \sigma(i_{k-1}), \sigma(i_{k+1}),\sigma(i_{k+2}), \sigma(i_{k}), \dots, \sigma(i_{\tau^{-1}(k)}), \dots, \sigma(i_d)) \\ \nonumber
&\vdots \\ \nonumber
\mapsto &(\sigma(i_0), \dots, \sigma(i_{k-1}), \sigma(i_{k+1}), \dots, \sigma(i_{\tau^{-1}(k)}),\sigma(i_{k}), \sigma(i_{\tau^{-1}(k)+1}) \dots, \sigma(i_d))\, .
\end{align*}
If instead we have $k > \tau^{-1}(k)$ then similarly we may put $\sigma(i_k)$ in the right spot by using $k - \tau^{-1}(k)$ switches, so that in general we need $|k - \tau^{-1}(k)|$ of them. It remains to put all other elements $\sigma(i_l)$, $l\not=k$, in the right spot. As we may `jump over' the term $\sigma(i_k)$, this is equivalent to reordering the sequence $(\sigma(i_0), \dots, \widehat{\sigma(i_k)}, \dots, \sigma(i_d))$ till the elements are in ascending order. Hence, if this can be done in $p$ switches then by definition $\sgn(A_e\setminus \{i_k\}, \sigma)  = (-1)^p$. It takes $m$ switches in total to order $(\sigma(i_0), \dots, \sigma(i_d))$, so that we find
\begin{equation}
p + |k - \tau^{-1}(k)| = m \quad  \text{ modulo } 2\, ,
\end{equation}
where
\begin{equation}
\sgn(A_e, \sigma)  = (-1)^m \text{ and } \sgn(A_e\setminus \{i_k\}, \sigma)  = (-1)^p \, .
\end{equation}
Using the identity $(-1)^q = (-1)^{-q}$ for all $q \in \mathbb{Z}$, we obtain
\begin{align}
\sgn(A_e, \sigma)  &= (-1)^m = (-1)^{p + |k - \tau^{-1}(k)|} = (-1)^p(-1)^{k - \tau^{-1}(k)} \\ \nonumber
&= \sgn(A_e\setminus \{i_k\}, \sigma) (-1)^{k}(-1)^{-\tau^{-1}(k)}\, .
\end{align}
Reordering, and using that $(\sgn(A_e, \sigma))^2 = 1$, we finally obtain
\begin{align}
\sgn(A_e, \sigma) (-1)^k\sgn(A_e\setminus \{i_k\}, \sigma)  = (-1)^{\tau^{-1}(k)}\, .
\end{align}
This completes the proof.
\end{proof}

\begin{proof}[Proof of Theorem \ref{mainthronlabel}]
We start by remarking that $(T_{\sigma}^d)^{\intercal} = T_{\sigma}^d$ for all $d$. Moreover, as the components of the map $\fup {d-1}$ are assumed odd, we see that $T_{\sigma}^d$ commutes with $\fup {d-1}$. Finally, as we have $(T_{\sigma}^d)^2 = \Id_{C^d(X)}$, we see from Lemma \ref{lemforlabel} that we have both
\begin{align}
 T_{\sigma}^{d-1} \circ  B_d =  \tilde{B}_d \circ T_{\sigma}^d  \quad  \text{ and }  \quad     T_{\sigma}^d \circ B_d^{\intercal} = \tilde{B}_d^{\intercal}  \circ T_{\sigma}^{d-1}    \, .
\end{align}
 The proof now follows from an easy calculation. For instance
\begin{align}
T_{\sigma}^{d-1}  B_d\fup {d-1}B_d^\intercal  &=  \tilde{B}_d T_{\sigma}^{d}\fup {d-1}B_d^\intercal  =  \tilde{B}_d \fup {d-1}T_{\sigma}^{d}B_d^\intercal     = \tilde{B}_d \fup {d-1}\tilde{B}_d^\intercal  T_{\sigma}^{d-1} \, .
\end{align}
The other identity in Theorem  \ref{mainthronlabel} follows in a similar way, which completes the proof.
\end{proof}

\begin{ex}\label{thecalculationlabel}
Returning to examples \ref{examlabeleffect} and \ref{examlabeleffect2}, let us denote by $B_2$ the linear boundary map on $2$-simplices (triangles) induced by the labelling on the left of Figure \ref{fig_label}, and by $\tilde{B}_2$ the map induced by the labelling on the right of Figure \ref{fig_label}. We have calculated $T_{\sigma}^1$ and $T_{\sigma}^2$ in Example \ref{Tsigmas}. One easily verifies that indeed $T_{\sigma}^1 B_2\fup1B^\intercal _2 T_{\sigma}^1 = \tilde{B}_2\fup1\tilde{B}^\intercal _2$ and 
$T_{\sigma}^2 B_2^\intercal \fdown2B_2 T_{\sigma}^2 = \tilde{B}_2^\intercal  \fdown2 \tilde{B}_2$. \hfill $\triangle$
\end{ex}

\begin{remk}
The map $T_{\sigma}^{0}$ is readily seen to equal the identity on $C^0(X)$ for any permutation $\sigma$. This is because a $0$-simplex $[i]$ is sent to $[\sigma(i)]$ under $\sigma$, which is trivially already in `ascending order'. Hence, we retrieve the well-known fact that the classical Laplacian for vertices is independent of the orientation on the edges.  \hfill $\triangle$ \end{remk}

\begin{remk}
The proof of Theorem \ref{mainthronlabel} shows that this result does not require $\fup {d-1}$ and $\fdown {d}$ to be componentwise. For instance, it suffices if both maps commute with all diagonal matrices with diagonal entries in $\{1,-1\}$.  For $\fup {d-1}: C_d(X) \rightarrow C_d(X)$ this means that each component  $(\fup {d-1})_e$ for $e \in X_d$ is odd in the variable $x_e$ and even in the other variables $x_{f}$, $f \in X_d\setminus\{e\}$, with an analogous condition on  $\fdown {d}$. \hfill $\triangle$
\end{remk}

\section{Symmetries of simplicial dynamics}\label{sec:symmetries}
In this section we briefly discuss how symmetries of a simplicial complex $X$ induce symmetries of the corresponding dynamical systems $\gdown d = B_{d}^\intercal\fdown d B_{d}$ and  $\gup d = B_{d+1}\fup d B_{d+1}^\intercal$. Recall that we only consider finite complexes, and we furthermore make the assumption that each simplex is uniquely determined by the vertices it contains.  We start with a definition.

\begin{defi}\label{defisymm}
Let $X$ be a simplicial complex. A \emph{symmetry} of $X$ is a permutation $\sigma$ of the vertices of $X$ such that for any simplex $s = [v_0, \dots, v_d]$ there exists a simplex $t = [w_0, \dots, w_d]$ with $\{w_0, \dots, w_d\} = \{\sigma(v_0), \dots, \sigma(v_d)\}$.  If the simplices $s$ and $t$ are related to each other by $\sigma$ in this way, then we simply write $t = \sigma(s)$. Note that the symmetries of $X$ form a group, which we denote by $\mathcal{S}_X$. \\
Next,  assume we have fixed a labelling of the vertices of $X$, so that we will henceforth identify vertices with their labels. Given a symmetry $\sigma \in \mathcal{S}_X$, we may define two linear maps $\tilde{S}^d_{\sigma}, {S}^d_{\sigma}: C_d(X) \rightarrow C_d(X)$ for each degree $d$. The first one is the most straightforward: if we identify the elements of $X_d$ with the canonical basis for $C_d(X)$, then $\tilde{S}^d_{\sigma}$ is given on this basis by
\begin{equation}
\tilde{S}^d_{\sigma}(s) =  \sigma(s)\quad \text{ for all } s \in X_d \, .
\end{equation}
As for the second one, we have 
\begin{equation}
{S}^d_{\sigma}(s) =  \sgn(A_s, \sigma)\sigma(s) \quad \text{ for all } s \in X_d \, .
\end{equation}
Here $ \sgn(A_s, \sigma)$ is the sign of the set $A_s = \{i_0, \dots, i_d\}$ with respect to $\sigma$ as defined in Definition \ref{signdefff}, where $s = [i_0, \dots, i_d]$. \hfill $\triangle$
\end{defi}
The following lemma will be very useful for dealing with the term $\sgn(A_s, \sigma)$.

\begin{lem}\label{lemmaonsgn}
Let $s = [i_0, \dots, i_d]$ be a simplex and write $A_s = \{i_0, \dots, i_d\}$ for its corresponding set of vertices. Given two symmetries $\sigma, \tau \in \mathcal{S}_X$, we have
\begin{equation}
\sgn(A_{\sigma(s)}, \tau)\sgn(A_s, \sigma)  = \sgn(A_s, \tau\sigma)\, .
\end{equation}
\end{lem}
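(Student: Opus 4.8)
The plan is to unwind the definition of $\sgn(A,\sigma)$ in terms of the unique permutation of index positions that sorts the image vertices into increasing order, and then to show that these index-position permutations compose correctly. Concretely, write $s = [i_0,\dots,i_d]$ with $i_0 < \dots < i_d$. By Definition~\ref{signdefff} there is a unique $\tau_\sigma \colon \{0,\dots,d\}\to\{0,\dots,d\}$ with $\sigma(i_{\tau_\sigma(0)}) < \dots < \sigma(i_{\tau_\sigma(d)})$, and $\sgn(A_s,\sigma) = \sgn(\tau_\sigma)$. The simplex $\sigma(s)$ is, by definition, the simplex whose vertex set is $\{\sigma(i_0),\dots,\sigma(i_d)\}$ written in increasing order; that is, $\sigma(s) = [\sigma(i_{\tau_\sigma(0)}),\dots,\sigma(i_{\tau_\sigma(d)})]$, so its ordered list of vertices is $j_k := \sigma(i_{\tau_\sigma(k)})$ for $k=0,\dots,d$. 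I would set up exactly this notation first, as it is the bookkeeping that makes everything transparent.

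Next I would apply the definition of $\sgn(\cdot,\tau)$ to the simplex $\sigma(s)$: there is a unique $\rho \colon \{0,\dots,d\}\to\{0,\dots,d\}$ with $\tau(j_{\rho(0)}) < \dots < \tau(j_{\rho(d)})$, and $\sgn(A_{\sigma(s)},\tau) = \sgn(\rho)$. Substituting $j_k = \sigma(i_{\tau_\sigma(k)})$ gives $\tau\sigma(i_{\tau_\sigma(\rho(0))}) < \dots < \tau\sigma(i_{\tau_\sigma(\rho(d))})$, i.e.\ the composite $\tau_\sigma\circ\rho$ is precisely the (unique) permutation of index positions that sorts the list $(\tau\sigma(i_0),\dots,\tau\sigma(i_d))$ into increasing order. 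By the uniqueness clause in Definition~\ref{signdefff} applied to the symmetry $\tau\sigma$, this forces $\tau_{\tau\sigma} = \tau_\sigma\circ\rho$, hence $\sgn(A_s,\tau\sigma) = \sgn(\tau_\sigma\circ\rho) = \sgn(\tau_\sigma)\sgn(\rho) = \sgn(A_s,\sigma)\,\sgn(A_{\sigma(s)},\tau)$, which is the claimed identity (the two factors on the right commute since they are $\pm 1$).

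The only genuine subtlety — and where I would be most careful — is the bookkeeping that the ordered vertex list of $\sigma(s)$ really is $(\sigma(i_{\tau_\sigma(0)}),\dots,\sigma(i_{\tau_\sigma(d)}))$, and correspondingly that "sorting $\tau$ applied to the ordered list of $\sigma(s)$" chains with "sorting $\sigma$ applied to the ordered list of $s$" to give "sorting $\tau\sigma$ applied to the ordered list of $s$". This is a statement about composing sorting permutations: if $\pi_1$ sorts a sequence $a$ into $b$ (so $b = a\circ\pi_1$) and $\pi_2$ sorts $b$ into $c$, then $\pi_1\circ\pi_2$ sorts $a$ into $c$. Once this is phrased cleanly in terms of the index-position permutations $\tau_\sigma$, $\rho$, and invoked against the \emph{uniqueness} of the sorting permutation guaranteed by the definition of $\sgn(A_s,\tau\sigma)$, the multiplicativity $\sgn(\tau_\sigma\circ\rho)=\sgn(\tau_\sigma)\sgn(\rho)$ finishes it immediately. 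An alternative route, if one prefers to avoid the composition-of-sorts lemma, is to use the characterization in the Remark following Definition~\ref{signdefff}: $\sgn(A_s,\sigma)$ counts mod $2$ the pairs $i<j$ in $A_s$ with $\sigma(i)>\sigma(j)$; then $\sgn(A_s,\tau\sigma)$ counts inversions of $\tau\sigma$ on $A_s$, and one splits each inversion according to whether it is already an inversion of $\sigma$ or an inversion of $\tau$ on the set $\sigma(A_s) = A_{\sigma(s)}$ — but the sorting-permutation argument is cleaner and I would present that.
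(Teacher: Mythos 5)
Your proof is correct and follows essentially the same route as the paper's: both introduce the sorting permutation for $\sigma$ on $s$ (your $\tau_\sigma$, the paper's $\kappa_1$) and for $\tau$ on $\sigma(s)$ (your $\rho$, the paper's $\kappa_2$), observe that the composite sorts $(\tau\sigma(i_0),\dots,\tau\sigma(i_d))$, and conclude by uniqueness of the sorting permutation together with multiplicativity of the sign. The bookkeeping point you flag — that the ordered vertex list of $\sigma(s)$ is $(\sigma(i_{\tau_\sigma(0)}),\dots,\sigma(i_{\tau_\sigma(d)}))$ — is exactly the step the paper also makes explicit.
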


\begin{proof}
As we write $s = [i_0, \dots, i_d]$, we assume that $i_0 < \dots <i_d$. Suppose $\kappa_1$ is the permutation of $\{0, \dots, d\}$ that brings the sequence $({\sigma(i_0)}, \dots, {\sigma(i_d)})$ into ascending order. That is, we have 
\begin{equation}\label{kappa1inis}
{\sigma(i_{\kappa_1(0)})} < \dots < \sigma(i_{\kappa_1(d)})\, .
\end{equation}
Likewise, denote by $\kappa_2$ the permutation that brings the sequence $({\tau(j_0)}, \dots, {\tau(j_d)})$ into ascending order, where $A_{\sigma(s)} = \{j_0, \dots, j_d\}$ with $j_0 < \dots < j_d$. In other words, 
\begin{equation}\label{kappa2injs}
{\tau(j_{\kappa_2(0)})} < \dots < \tau(j_{\kappa_2(d)})\, .
\end{equation} 
From Equation \eqref{kappa1inis} we see that ${\sigma(i_{\kappa_1(0)})} = j_0$, ${\sigma(i_{\kappa_1(1)})} = j_1$ and so forth. Hence, from Equation \eqref{kappa2injs} we see that
\begin{equation}
\tau(j_{\kappa_2(0)}) = \tau({\sigma(i_{\kappa_1(\kappa_2(0))})})  =  \tau\sigma(i_{\kappa_1\kappa_2(0)}) < \dots < \tau(j_{\kappa_2(d)}) =    \tau\sigma(i_{\kappa_1\kappa_2(d)}) \, .
\end{equation} 
In other words, $\kappa_1\kappa_2$ rearranges the elements of $A_{\tau\sigma(s)}$ into ascending order. We therefore find
\begin{equation}
\sgn(A_{\sigma(s)}, \tau)\sgn(A_s, \sigma)  = \sgn(\kappa_2)\sgn(\kappa_1) = \sgn(\kappa_1\kappa_2) = \sgn(A_s, \tau\sigma)\, ,
\end{equation}
which completes the proof.
\end{proof}

Both linear maps of Definition \ref{defisymm} are natural candidates for symmetries of the systems $\gdown d$ and $\gup d$, in the following way:

\begin{lem}\label{lemonrepresess}
Both $\tilde{S}^d_{\sigma}$ and ${S}^d_{\sigma}$ define representations of $\mathcal{S}_X$. That is, we have $\tilde{S}^d_{\Id_{}} = {S}^d_{\Id_{}} = \id_{C_d(X)}$ and $\tilde{S}^d_{\sigma} \tilde{S}^d_{\tau}  = \tilde{S}^d_{\sigma\tau}$, ${S}^d_{\sigma} {S}^d_{\tau}  = {S}^d_{\sigma\tau}$ for all $\sigma, \tau \in \mathcal{S}_X$.
\end{lem}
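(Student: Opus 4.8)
The plan is to verify the two required properties—that these assignments send the identity symmetry to the identity operator, and that they are multiplicative—directly on the canonical basis of $C_d(X)$, since both $\tilde S^d_\sigma$ and $S^d_\sigma$ are defined by their action on basis elements $s \in X_d$. The identity claim is immediate: for $\sigma = \Id$ we have $\Id(s) = s$ for every simplex $s$, and moreover the vertices of $s = [i_0,\dots,i_d]$ are already in ascending order, so $\sgn(A_s,\Id) = \sgn(\tau)$ with $\tau$ the identity permutation of $\{0,\dots,d\}$, giving $\sgn(A_s,\Id)=1$. Hence $\tilde S^d_{\Id}(s) = s = S^d_{\Id}(s)$ for all $s$, and by linearity both operators equal $\id_{C_d(X)}$.

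For multiplicativity, I would first treat $\tilde S^d_\sigma$, which is the easy case. Fix $\sigma,\tau \in \mathcal S_X$ and a basis element $s \in X_d$. Then $\tilde S^d_\sigma(\tilde S^d_\tau(s)) = \tilde S^d_\sigma(\tau(s)) = \sigma(\tau(s))$, and one checks from Definition~\ref{defisymm} that the simplex $\sigma(\tau(s))$ (the simplex whose vertex set is $\sigma(\tau(\{v_0,\dots,v_d\}))$) is exactly $(\sigma\tau)(s)$, since applying $\sigma$ then $\tau$ to the vertices is the same as applying $\sigma\tau$. Thus $\tilde S^d_\sigma \tilde S^d_\tau(s) = (\sigma\tau)(s) = \tilde S^d_{\sigma\tau}(s)$, and linearity finishes this half. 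One small point to be careful about is the order convention: the group product $\sigma\tau$ must be the one compatible with ``first $\tau$, then $\sigma$'' on vertices, and I would state this convention explicitly so the composition matches.

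The substantive case is $S^d_\sigma$, and this is where the sign bookkeeping enters; the main obstacle is precisely making sure the cocycle relation for the signs is applied to the right simplex. Compute $S^d_\sigma(S^d_\tau(s)) = S^d_\sigma\big(\sgn(A_s,\tau)\,\tau(s)\big) = \sgn(A_s,\tau)\,\sgn(A_{\tau(s)},\sigma)\,\sigma(\tau(s))$. By the previous paragraph $\sigma(\tau(s)) = (\sigma\tau)(s)$, so it remains to show $\sgn(A_s,\tau)\,\sgn(A_{\tau(s)},\sigma) = \sgn(A_s,\sigma\tau)$. This is exactly Lemma~\ref{lemmaonsgn} (with the roles of $\sigma$ and $\tau$ matched to that lemma's statement, reading off $\sgn(A_{\tau(s)},\sigma)\sgn(A_s,\tau) = \sgn(A_s,\sigma\tau)$). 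Hence $S^d_\sigma S^d_\tau(s) = \sgn(A_s,\sigma\tau)\,(\sigma\tau)(s) = S^d_{\sigma\tau}(s)$, and by linearity $S^d_\sigma S^d_\tau = S^d_{\sigma\tau}$. The only thing to watch is consistency between the group-law convention used here and the one in Lemma~\ref{lemmaonsgn}; once those are aligned the proof is a two-line computation on basis vectors, so I would keep it brief and simply cite Lemma~\ref{lemmaonsgn} for the sign identity.
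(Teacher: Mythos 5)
Your proof is correct and follows essentially the same route as the paper: verify both properties on the canonical basis of $C_d(X)$, with the $\tilde S^d_\sigma$ case being immediate and the $S^d_\sigma$ case reducing to the sign cocycle identity of Lemma~\ref{lemmaonsgn} (which you apply with the roles of $\sigma$ and $\tau$ correctly matched). Your extra care about the composition convention and the explicit check that $\sgn(A_s,\Id)=1$ are fine refinements of what the paper dismisses as ``clear from the definitions.''
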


\begin{proof}
It is clear from the definitions that $\tilde{S}^d_{\Id_{}} = {S}^d_{\Id_{}} = \id_{C_d(X)}$. Given $\sigma, \tau \in \mathcal{S}_X$ and $s \in X_d$ (seen as an element of $C_d(X)$), we have
\begin{align}
\tilde{S}^d_{\tau} \tilde{S}^d_{\sigma}(s) =    \tilde{S}^d_{\tau}(\sigma(s))  =  \tau\sigma(s) = \tilde{S}^d_{\tau\sigma}(s) \, .
\end{align} 
As $\tilde{S}^d_{\tau} \tilde{S}^d_{\sigma}$ and $\tilde{S}^d_{\tau\sigma}$ agree on a basis of $C_d(X)$, they agree as linear maps. Likewise, we find 
\begin{align}\label{calcrepresnottilde}
{S}^d_{\tau} {S}^d_{\sigma}(s) &= {S}^d_{\tau} \sgn(A_s, \sigma)\sigma(s) =\sgn(A_s, \sigma) {S}^d_{\tau} \sigma(s) = \sgn(A_s, \sigma)\sgn(A_{\sigma(s)}, \tau)\tau\sigma(s) \\ \nonumber
&= \sgn(A_s, \tau\sigma)\tau\sigma(s) = {S}^d_{\tau\sigma} (s)\, ,
\end{align}
where in the second-to-last step we have used Lemma \ref{lemmaonsgn}. This completes the proof. 
\end{proof}

Nevertheless, only one of these representations is guaranteed to induce a symmetry of the corresponding dynamical systems:

\begin{ex}\label{firstexamwithsdsig}
We return to the complex given by Figure \ref{fig_label}, with the vertex-labelling on the left. A symmetry of this simplex is given by the transposition of vertices $1$ and $3$, $\sigma = (1,3)$. This induces a permutation of the edges where $e_1$ and $e_2$ are switched, as are  $e_4$ and $e_5$. Moreover, the orientations of $e_1$ and $e_2$ are reversed, and so is the orientation of $e_3$. We therefore find
\begin{equation}
\tilde{S}_{\sigma}^1 = \begin{pmatrix}
0 & 1 & 0 & 0 & 0\\
1 & 0 & 0 & 0 & 0\\
0 & 0 & 1 & 0 & 0\\
0 & 0 & 0 & 0 & 1\\
0 & 0 & 0 & 1 & 0\\
\end{pmatrix} \, \text{ and }
{S}_{\sigma}^1 = \begin{pmatrix}
0 & -1 & 0 & 0 & 0\\
-1 & 0 & 0 & 0 & 0\\
0 & 0 & -1 & 0 & 0\\
0 & 0 & 0 & 0 & 1\\
0 & 0 & 0 & 1 & 0\\
\end{pmatrix}\, .
\end{equation}
In Example \ref{examlabeleffect} we have calculated $  \gup 1 = B_2\fup1B^\intercal _2$. We see that 
\begin{align}\label{leftODEtriangles2}
 \gup 1 (\tilde{S}_{\sigma}^1x) 
&= \left(\begin{array}{l}
f(x_2 + x_1 - x_3) \\
f(x_1 + x_2 - x_3)  \\
f(x_3 - x_2 - x_1 ) + f(x_3 - x_5 + x_4) \\
f(x_5 -x_3 - x_4) \\
f(x_4 + x_3 - x_5 ) \\
\end{array} \right) \text{whereas} \\ \nonumber
\tilde{S}_{\sigma}^1( \gup 1 (x) )
&=  \left(\begin{array}{l}
f(x_2 + x_1 - x_3)  \\
f(x_1 + x_2 - x_3) \\
f(x_3 - x_1 - x_2 ) + f(x_3 - x_4 + x_5) \\
f(x_5 + x_3 - x_4 ) \\
f(x_4 -x_3 - x_5) \\
\end{array} \right)\, .
\end{align}
These do not agree, and so the map $\tilde{S}_{\sigma}^1$ is not a symmetry of $\gup 1$. On the other hand, we have 
\begin{align}\label{leftODEtriangles3}
 \gup 1 ({S}_{\sigma}^1x) 
&=  \left(\begin{array}{l}
f(-x_2 - x_1 + x_3) \\
f(-x_1 - x_2 + x_3)  \\
f(-x_3 + x_2 + x_1 ) + f(-x_3 - x_5 + x_4) \\
f(x_5 +x_3 - x_4) \\
f(x_4 - x_3 - x_5 ) \\
\end{array} \right) \text{and} \\ \nonumber
{S}_{\sigma}^1( \gup 1 (x) )
&=  \left(\begin{array}{l}
-f(x_2 + x_1 - x_3)  \\
-f(x_1 + x_2 - x_3) \\
-f(x_3 - x_1 - x_2 ) - f(x_3 - x_4 + x_5) \\
f(x_5 + x_3 - x_4 ) \\
f(x_4 -x_3 - x_5) \\
\end{array} \right)\, .
\end{align}
Using the fact that $f$ is odd, we see that these agree. Hence, ${S}_{\sigma}^1$ is a symmetry of $\gup 1$.
\hfill $\triangle$
\end{ex}

\begin{ex}
We again look at the complex given by Figure \ref{fig_label}, with the vertex-labelling on the left. Suppose $\fdown 1:C_0(X) \rightarrow C_0(X)$ is componentwise with each component given by the odd function $f$. We then have 
\begin{align}
 \gdown 1 (x) = B_1^{\intercal} \fdown 1 B_1(x)  
&=  \left( \begin{array}{l}
f(x_1 + x_3 + x_4) + f(x_1 - x_2) \\
f(x_2 + x_3 - x_5)  + f(x_2 - x_1)  \\
f(x_3 + x_1 + x_4)  + f(x_3 + x_2 - x_5) \\
f(x_4 + x_1 + x_3) + f(x_4 + x_5) \\
f(x_5 - x_2 - x_3 ) + f(x_5 + x_4) \\
\end{array} \right) \, .
\end{align}
Let $\tilde{S}_{\sigma}^1$ and ${S}_{\sigma}^1$ be as in Example \ref{firstexamwithsdsig}. One verifies that $ \gdown 1$ does not commute with $\tilde{S}_{\sigma}^1$, whereas it does commute with ${S}_{\sigma}^1$.
\hfill $\triangle$
\end{ex}

It follows from the examples above that $\tilde{S}_{\sigma}^d$ is not necessarily a symmetry of  $ \gup d$  and  $ \gdown d$. Instead, the symmetry of the dynamics is represented by ${S}_{\sigma}^d$, as the following theorem tells us.

\begin{thr}\label{mainthronsym}
Let $\sigma$ be a symmetry of the complex $X$ and suppose $ \fup d $  and $ \fdown d $ are componentwise with each component given by the same odd function. Then we have 
\begin{equation}
 \gup d \circ S^d_{\sigma}  = S^d_{\sigma}  \circ   \gup d  \text{ and } \gdown d   \circ S^d_{\sigma}  = S^d_{\sigma}  \circ   \gdown d  \, .
\end{equation}
\end{thr}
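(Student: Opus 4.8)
The plan is to reduce Theorem \ref{mainthronsym} to a commutation relation between the boundary maps and the representation $S^d_\sigma$, entirely analogous to how Theorem \ref{mainthronlabel} was deduced from Lemma \ref{lemforlabel}. Specifically, I would first prove the key lemma
\[
B_d \circ S^d_\sigma = S^{d-1}_\sigma \circ B_d
\]
for every $d$ and every symmetry $\sigma \in \mathcal{S}_X$, and then derive the theorem by a short computation using that $S^d_\sigma$ commutes with any componentwise map with odd identical components and that $(S^d_\sigma)^\intercal = (S^d_\sigma)^{-1}$ (the latter because $S^d_\sigma$ is a signed permutation matrix, hence orthogonal).

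For the key lemma, I would fix a $d$-simplex $e = [i_0,\dots,i_d]$ with $i_0<\dots<i_d$, and write $\sigma(e) = [j_0,\dots,j_d]$ with $j_0<\dots<j_d$; as in the proof of Lemma \ref{lemforlabel}, there is a unique permutation $\tau$ of $\{0,\dots,d\}$ with $j_k = \sigma(i_{\tau(k)})$, and $\sgn(A_e,\sigma) = \sgn(\tau)$. Computing $B_d S^d_\sigma(e) = \sgn(A_e,\sigma)\,\partial_d[j_0,\dots,j_d] = \sgn(\tau)\sum_k (-1)^k [j_0,\dots,\widehat{j_k},\dots,j_d]$, while $S^{d-1}_\sigma B_d(e) = \sum_\ell (-1)^\ell \sgn(A_e\setminus\{i_\ell\},\sigma)\,\sigma([i_0,\dots,\widehat{i_\ell},\dots,i_d])$. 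The face $[i_0,\dots,\widehat{i_\ell},\dots,i_d]$ maps under $\sigma$ to the face of $\sigma(e)$ obtained by deleting $j_{\tau^{-1}(\ell)}$, so matching the two sums term-by-term amounts exactly to the identity
\[
\sgn(A_e,\sigma)(-1)^\ell \sgn(A_e\setminus\{i_\ell\},\sigma) = (-1)^{\tau^{-1}(\ell)},
\]
which is precisely Equation \eqref{usefulidentitytau} established inside the proof of Lemma \ref{lemforlabel}. So this step can largely be imported; one only needs to note that the bookkeeping about which faces match up is identical, with $\tilde B_d$ replaced by $\sigma$ acting on faces. (One should double-check that the sign convention in $S^{d-1}_\sigma$ applied to a $(d-1)$-face is $\sgn(A_e\setminus\{i_\ell\},\sigma)$, i.e.\ the sign of the $(d-1)$-set $\{i_0,\dots,\widehat{i_\ell},\dots,i_d\}$ with respect to $\sigma$, matching the definition of $S^{d-1}_\sigma$ via Definition \ref{signdefff}; this is immediate.)

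Given the lemma, the theorem follows by the same calculation as in the proof of Theorem \ref{mainthronlabel}. From $B_d S^d_\sigma = S^{d-1}_\sigma B_d$ and orthogonality we also get $S^d_\sigma B_d^\intercal = B_d^\intercal S^{d-1}_\sigma$ (equivalently $B_d^\intercal (S^{d-1}_\sigma)^{-1} = (S^d_\sigma)^{-1} B_d^\intercal$, then use $(S^\bullet_\sigma)^{-1}=(S^\bullet_\sigma)^\intercal$ and transpose). Since $\fup d$ is componentwise with all components equal to one fixed odd function, $\fup d$ commutes with every diagonal $\pm1$ matrix and with every permutation matrix on $C_{d+1}(X)$, hence with the signed permutation $S^{d+1}_\sigma$ — here I would spell out the one-line verification that a signed permutation with odd identical scalar nonlinearity commutes with $\fup d$. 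Then
\[
\gup d \circ S^d_\sigma = B_{d+1}\fup d B_{d+1}^\intercal S^d_\sigma = B_{d+1}\fup d S^{d+1}_\sigma B_{d+1}^\intercal = B_{d+1} S^{d+1}_\sigma \fup d B_{d+1}^\intercal = S^d_\sigma B_{d+1}\fup d B_{d+1}^\intercal = S^d_\sigma \circ \gup d,
\]
and symmetrically $\gdown d \circ S^d_\sigma = B_d^\intercal \fdown d B_d S^d_\sigma = B_d^\intercal \fdown d S^{d-1}_\sigma B_d = B_d^\intercal S^{d-1}_\sigma \fdown d B_d = S^d_\sigma B_d^\intercal \fdown d B_d = S^d_\sigma \circ \gdown d$. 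The only genuine obstacle is the sign-tracking in the key lemma, but as noted it is essentially the content of Equation \eqref{usefulidentitytau} already proved, together with the multiplicativity of signs in Lemma \ref{lemmaonsgn}; everything else is formal.
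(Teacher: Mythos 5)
Your proposal is correct and follows essentially the same route as the paper: the same key commutation lemma $B_d S^d_\sigma = S^{d-1}_\sigma B_d$ proved by reducing the sign-matching to the identity $\sgn(A_e,\sigma)(-1)^\ell\sgn(A_e\setminus\{i_\ell\},\sigma)=(-1)^{\tau^{-1}(\ell)}$ already established in the proof of Lemma~\ref{lemforlabel}, the same transpose relation (the paper phrases it as $(S^d_\sigma)^\intercal=S^d_{\sigma^{-1}}$, equivalent to your orthogonality observation via Lemma~\ref{lemonrepresess}), and the same final computation using that a componentwise map with identical odd components commutes with signed permutation matrices.
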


The key lemma for proving Theorem \ref{mainthronsym} is given by:

\begin{lem}\label{commutsss}
For all $d>0$ and $\sigma \in \mathcal{S}_X$ it holds that $S^{d-1}_{\sigma}B_d = B_d S^{d}_{\sigma}$.
\end{lem}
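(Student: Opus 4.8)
The plan is to verify the identity on each basis vector $s\in X_d$ of $C_d(X)$, carrying out the sign bookkeeping exactly as in the proof of Lemma \ref{lemforlabel} and reducing everything to the identity \eqref{usefulidentitytau} already established there. Write $s=[i_0,\dots,i_d]$ with $i_0<\dots<i_d$, and let $\tau$ be the unique permutation of $\{0,\dots,d\}$ with $\sigma(i_{\tau(0)})<\dots<\sigma(i_{\tau(d)})$; by Definition \ref{signdefff} we then have $\sgn(A_s,\sigma)=\sgn(\tau)$ and $\sigma(s)=[\sigma(i_{\tau(0)}),\dots,\sigma(i_{\tau(d)})]$, which is a simplex of $X_d$ because $\sigma$ is a symmetry.

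The one structural fact I would record first is that $\sigma$ commutes with passing to a face: since $\sigma$ is a bijection on vertices, $\sigma\big([i_0,\dots,\widehat{i_k},\dots,i_d]\big)$ is precisely the face of $\sigma(s)$ obtained by deleting the vertex $\sigma(i_k)$, and $\sigma(i_k)$ occupies slot number $\tau^{-1}(k)$ in the ordered vertex list of $\sigma(s)$ (since $\sigma(i_k)=\sigma(i_{\tau(\tau^{-1}(k))})$).

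With this in hand I would expand both sides on the basis vector $s$. The left-hand side is $B_d S^d_\sigma(s)=\sgn(\tau)\,\partial_d\sigma(s)$, a signed sum over the $d+1$ faces of $\sigma(s)$, the face deleting the $\ell$-th vertex carrying coefficient $\sgn(\tau)(-1)^\ell$. The right-hand side is $S^{d-1}_\sigma(\partial_d s)=\sum_{k=0}^d(-1)^k\sgn(A_s\setminus\{i_k\},\sigma)\,\sigma\big([i_0,\dots,\widehat{i_k},\dots,i_d]\big)$, and by the face-compatibility just noted the $k$-th summand is, up to the scalar $(-1)^k\sgn(A_s\setminus\{i_k\},\sigma)$, the face of $\sigma(s)$ deleting $\sigma(i_k)$, i.e. the face deleting its $\tau^{-1}(k)$-th vertex. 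Re-indexing the second sum by $k=\tau(\ell)$, the two expansions run over the same set of $(d-1)$-simplices, and equating the coefficient of the face deleting the $\ell$-th vertex of $\sigma(s)$ reduces the claim to $\sgn(\tau)(-1)^\ell=(-1)^{\tau(\ell)}\sgn(A_s\setminus\{i_{\tau(\ell)}\},\sigma)$; substituting back $k=\tau(\ell)$ and using $\sgn(\tau)=\sgn(A_s,\sigma)$ together with $\sgn(A_s,\sigma)^2=1$ turns this into exactly Equation \eqref{usefulidentitytau}. Hence the identity holds on every basis vector, so $S^{d-1}_\sigma B_d=B_d S^d_\sigma$ as linear maps.

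The main obstacle is purely bookkeeping: correctly identifying $\sigma$ of the $k$-th face of $s$ with the $\tau^{-1}(k)$-th face of $\sigma(s)$, so that the two expansions are genuinely indexed by the same $(d-1)$-simplices and the comparison of coefficients is meaningful. No combinatorial input beyond \eqref{usefulidentitytau} is needed; if a self-contained argument is preferred, that identity can be reproven inline by the same transposition-counting bound $p+|k-\tau^{-1}(k)|\equiv m \bmod 2$ used in the proof of Lemma \ref{lemforlabel}.
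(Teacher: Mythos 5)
Your proof is correct and takes essentially the same route as the paper's: both expand the two sides on a basis simplex $s$, identify $\sigma$ applied to the $k$-th face of $s$ with the $\tau^{-1}(k)$-th face of $\sigma(s)$, and reduce the resulting coefficient comparison to the sign identity \eqref{usefulidentitytau} established in the proof of Lemma~\ref{lemforlabel}. The bookkeeping (including the reindexing $k=\tau(\ell)$ and the use of $\sgn(A_s,\sigma)^2=1$) checks out, so there is nothing to add.
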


\begin{proof}
As before, we identify a $d$-simplex $s = [i_0, \dots, i_d]$ with its corresponding basis element in $C_d(X)$. On the one hand, we see that
\begin{align}
S^{d-1}_{\sigma}B_d [i_0, \dots, i_d] &= S^{d-1}_{\sigma}\sum_{k=0}^d(-1)^k [i_0, \dots, \widehat{i_k}, \dots, i_d] \\ \nonumber
&= \sum_{k=0}^d(-1)^k \sgn(A_{s} \setminus \{i_k\}, \sigma) \sigma([i_0, \dots, \widehat{i_k}, \dots, i_d])\, .
\end{align}
On the other hand, we have 
\begin{align}
B_d S^{d}_{\sigma}[i_0, \dots, i_d] &= B_d  \sgn(A_{s}, \sigma) \sigma([i_0, \dots i_d]) \\ \nonumber
&=  \sgn(A_{s}, \sigma) \sum_{k=0}^d (-1)^{s(\sigma(i_k))} \sigma([i_0, \dots, \widehat{i_k}, \dots i_d])\, ,
\end{align}
where $s(\sigma(i_k))$ denotes the position of $\sigma(i_k)$ when we order the elements in $\{\sigma(i_0), \dots, \sigma(i_d) \}$. Hence, we are done if we can show that
\begin{equation}
(-1)^{s(\sigma(i_k))}(-1)^k \sgn(A_{s} \setminus \{i_k\}, \sigma) =  \sgn(A_{s}, \sigma)\, .
\end{equation}
However, this is precisely the statement we have showed in the proof of Lemma \ref{lemforlabel}. That is, we can order 
$$(\sigma(i_0), \dots, \sigma(i_d))$$
by first bringing $\sigma(i_k)$ to its right place $s(\sigma(i_k))$, which accounts for the term $(-1)^{|s(\sigma(i_k))- k|} = (-1)^{s(\sigma(i_k))}(-1)^k$. After this we order the remaining terms, which accounts for $\sgn(A_{s} \setminus \{i_k\}, \sigma)$. This completes the proof.\end{proof}

We will also use the following result.

\begin{lem}\label{transposeisinverse}
For each $\sigma \in \mathcal{S}_X$ and $d \geq 0$ we have $(S^d_{\sigma})^{\intercal} = S^d_{\sigma^{-1}}$.
\end{lem}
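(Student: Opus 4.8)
The plan is to compute the matrix $S^d_\sigma$ entrywise and observe that it is a signed permutation matrix, so that its transpose equals its inverse; then we identify that inverse with $S^d_{\sigma^{-1}}$ using Lemma~\ref{lemonrepresess} and Lemma~\ref{lemmaonsgn}. Concretely, recall $S^d_\sigma(s) = \sgn(A_s,\sigma)\,\sigma(s)$ for a basis simplex $s\in X_d$. Since $\sigma$ permutes $X_d$ and $\sgn(A_s,\sigma)\in\{1,-1\}$, the map $S^d_\sigma$ is represented (in the canonical ordered basis of $C_d(X)$) by a matrix with exactly one nonzero entry $\pm 1$ in each row and column. For such a matrix $M$, the transpose $M^\intercal$ is its inverse: $(M^\intercal M)_{st} = \sum_r M_{rs}M_{rt}$, and the only way to get a nonzero contribution is $r$ the unique simplex with $\sigma(r)=s$, forcing $s=t$, in which case the entry is $(\pm1)^2=1$. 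Hence $(S^d_\sigma)^\intercal = (S^d_\sigma)^{-1}$.

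It then remains to show $(S^d_\sigma)^{-1} = S^d_{\sigma^{-1}}$. By Lemma~\ref{lemonrepresess}, $S^d_\sigma$ and $S^d_{\sigma^{-1}}$ are both part of a representation of $\mathcal S_X$, so $S^d_\sigma S^d_{\sigma^{-1}} = S^d_{\sigma\sigma^{-1}} = S^d_{\Id} = \id_{C_d(X)}$, which gives $(S^d_\sigma)^{-1} = S^d_{\sigma^{-1}}$ immediately. Combining the two displays yields $(S^d_\sigma)^\intercal = S^d_{\sigma^{-1}}$, as claimed. (One could alternatively avoid invoking Lemma~\ref{lemonrepresess} and check directly that $S^d_{\sigma^{-1}}S^d_\sigma(s) = \sgn(A_s,\sigma)\sgn(A_{\sigma(s)},\sigma^{-1})\,s = \sgn(A_s,\sigma^{-1}\sigma)\,s = \sgn(A_s,\Id)\,s = s$ using Lemma~\ref{lemmaonsgn}, but reusing the representation property is cleaner.)

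I do not anticipate a serious obstacle here; the statement is essentially the observation that a signed permutation matrix is orthogonal together with the already-established fact that $\sigma\mapsto S^d_\sigma$ is a group homomorphism. The only point requiring a little care is to phrase the transpose-equals-inverse step basis-independently or, more simply, to fix the canonical ordered basis of $C_d(X)$ (which the paper has done throughout) so that "transpose" is unambiguous, and to note that $\tilde S^d_\sigma$ being a permutation matrix and $S^d_\sigma$ differing from it only by a diagonal sign matrix $T$ (with $T^\intercal = T$, $T^2 = \id$) also makes the orthogonality transparent.
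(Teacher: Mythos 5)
Your proof is correct, but it is organized differently from the paper's. The paper argues directly at the level of matrix entries: it computes $s^{\intercal}S^d_{\sigma}t = \sgn(A_t,\sigma)\,\delta_{s,\sigma(t)}$ and $t^{\intercal}S^d_{\sigma^{-1}}s = \sgn(A_{\sigma(t)},\sigma^{-1})\,\delta_{s,\sigma(t)}$, and then invokes Lemma~\ref{lemmaonsgn} to conclude $\sgn(A_{\sigma(t)},\sigma^{-1}) = \sgn(A_t,\sigma)$, so the two bilinear forms coincide. You instead factor the claim into two independent observations: $S^d_{\sigma}$ is a signed permutation matrix and hence orthogonal, so $(S^d_{\sigma})^{\intercal} = (S^d_{\sigma})^{-1}$; and $\sigma \mapsto S^d_{\sigma}$ is a group homomorphism (Lemma~\ref{lemonrepresess}), so $(S^d_{\sigma})^{-1} = S^d_{\sigma^{-1}}$. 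The underlying sign identity is the same in both arguments --- your route just absorbs it into the already-proved representation property rather than re-deriving it --- so your version is more modular and arguably cleaner, while the paper's is self-contained modulo Lemma~\ref{lemmaonsgn}. The one point you use implicitly (as does the paper) is that a symmetry $\sigma$ induces a genuine permutation of $X_d$, which follows since $\sigma^{-1}\in\mathcal{S}_X$ and simplices are determined by their vertex sets; a one-line remark to that effect would make the ``one nonzero entry per row'' claim airtight.
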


\begin{proof}
As before, we interpret the elements of $X_d$ as a basis for $C_d(X)$. The standard inner-product on $C_d(X)$ then satisfies $s^{\intercal}t = \delta_{s,t}$ for all $s,t \in X_d$. It follows that 
\begin{align}\label{innerproductofS}
s^{\intercal}S^d_{\sigma}t = s^{\intercal} \sgn(A_{t}, \sigma) \sigma(t) = \sgn(A_{t}, \sigma)\delta_{s, \sigma(t)}\, .
\end{align}
We therefore also have
\begin{align}\label{innerproductofStrans}
t^{\intercal}S^d_{\sigma^{-1}}s =  \sgn(A_{s}, \sigma^{-1})\delta_{t, \sigma^{-1}(s)} = \sgn(A_{s}, \sigma^{-1})\delta_{s, \sigma(t)} = \sgn(A_{\sigma(t)}, \sigma^{-1})\delta_{s, \sigma(t)} \, .
\end{align}
Now, from Lemma \ref{lemmaonsgn} it follows that  $\sgn(A_{\sigma(t)}, \sigma^{-1})\sgn(A_{t}, \sigma) =  \sgn(A_{t}, \id) = 1$. As we have $\sgn(A_{\sigma(t)}, \sigma^{-1}), \sgn(A_{t}, \sigma) \in \{1,-1\}$, we see that in fact $\sgn(A_{\sigma(t)}, \sigma^{-1}) = \sgn(A_{t}, \sigma)$. This shows that equations \eqref{innerproductofS} and \eqref{innerproductofStrans} agree, and we conclude that indeed $(S^d_{\sigma})^{\intercal} = S^d_{\sigma^{-1}}$.
\end{proof}

\begin{proof}[Proof of Theorem \ref{mainthronsym}]
Note that $ \fup d$ commutes with each map $S^{d+1}_{\sigma}$, as the latter can be written as the product of a permutation matrix and a diagonal matrix with diagonal entries in $\{1, -1\}$ (in any order). Likewise,  $\fdown d $ commutes with each map $S^{d-1}_{\sigma}$. By Lemma \ref{commutsss} we have $S^{d-1}_{\sigma}B_d = B_d S^{d}_{\sigma}$ for all $\sigma \in \mathcal{S}_X$, from which it also follows that
\begin{align}
B_d^{\intercal}S^{d-1}_{\sigma}  =      B_d^{\intercal}(S^{d-1}_{\sigma^{-1}})^{\intercal}      =      (S^{d-1}_{\sigma^{-1}}B_d)^{\intercal} = (B_d S^{d}_{\sigma^{-1}})^{\intercal} = (S^{d}_{\sigma^{-1}})^{\intercal}B_d^{\intercal} = S^{d}_{\sigma}B_d^{\intercal} \, ,
\end{align}
using Lemma \ref{transposeisinverse}.
The statement now follows from an easy calculation as in the proof of Theorem \ref{mainthronlabel}.
\end{proof}

\section{Invariant spaces}\label{sec:invariant}

Next, we investigate synchrony and anti-synchrony spaces for $\gdown d = B_{d}^\intercal\fdown d B_{d}$ and  $\gup d = B_{d+1}\fup d B_{d+1}^\intercal$. That is, we look at invariant spaces given by identities of the form $x_s = x_t$,  $x_s = -x_t$ and $x_s = 0$ for d-simplices $s$ and $t$. Motivated by the results of Section \ref{sec:realization}, we will only consider the case where $\fdown d$ or $\fup d$ is component-wise, and ask about such spaces that are invariant for any choice of $\fdown d$ or $\fup d$ in a particular structural family. To describe this family, we fix a partition $\mathcal{P}= (P_1, \dots, P_k)$ of the $D$-simplices, and write 
\begin{equation}\label{descripCompD}
\cw {\mathcal{P}}{D} = \left\{     \begin{aligned}
  & \mathcal{F}: C_{D}(X) \rightarrow C_{D}(X)  \text{ component-wise with odd components, }   \\
  & \mathcal{F}_{s} =  \mathcal{F}_{t} \text{ if } s,t \in P_i \text{ for some } i \in \{1, \dots, k\}.
  \end{aligned}   \right\} \, .
\end{equation}
The interpretation of this family is that some of the higher- or lower-dimensional simplices through which communication occurs (where $D=d+1$ or $D=d-1$) are of identical type, while others are different in nature. We are then interested in all (anti)-synchrony spaces $W \subseteq C_d(X)$ satisfying $B_{d}^\intercal\fdown d B_{d}(W) \subseteq W$ for all $\fdown d \in \cw {\mathcal{P}}{d-1}$, for a given partition $\mathcal{P}$ of the $(d-1)$-simplices. Similarly, we ask for all such spaces satisfying $B_{d+1}\fup d B_{d+1}^\intercal(W) \subseteq W$ for all $\fup d \in \cw {\mathcal{Q}}{d+1}$, for a given partition $\mathcal{Q}$ of the $(d+1)$-simplices. \\
Note that we have restricted to odd components in the description \eqref{descripCompD} of $\cw {\mathcal{P}} D$. This is motivated by the results of Section \ref{sec:orientation}, where we found that this assumption guarantees that different orientations of the simplex give conjugate systems. As these conjugacies are by diagonal matrices with diagonal entries in $\{1, -1\}$, we argue that there is no real distinction between synchrony and anti-synchrony spaces. This is the reason we consider both, and we will simply refer to them as anti-synchrony spaces from here on out. The technical details behind this argument are given by the following straightforward result.

\begin{lem}
Let $F: \R^n \rightarrow  \R^n$ be a vector field and $G = A\circ F \circ A^{-1}: \R^n \rightarrow  \R^n$ the push-forward of $F$ by the invertible linear map $A: \R^n \rightarrow  \R^n$. In other words, $A$ conjugates $F$ and $G$. The assignment $W \mapsto AW$ induces a bijection between the linear spaces $W \subseteq \R^n$ that are invariant under $F$ and those that are invariant under $G$.
\end{lem}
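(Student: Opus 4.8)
The plan is to show directly that the map $W \mapsto AW$ carries $F$-invariant subspaces to $G$-invariant subspaces, and that its inverse $W \mapsto A^{-1}W$ carries $G$-invariant subspaces back to $F$-invariant subspaces; these two facts together say precisely that $W \mapsto AW$ restricts to a bijection between the two collections. Since $A$ is linear and invertible, $AW$ is a linear subspace whenever $W$ is, and $A^{-1}(AW) = W$, so the only real content is the invariance bookkeeping.

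First I would record the key identity. Suppose $W \subseteq \R^n$ is invariant under $F$, i.e. $F(W) \subseteq W$. I want to check that $G(AW) \subseteq AW$. Take any $w \in W$, so that $Aw$ is a typical element of $AW$. Then, using $G = A \circ F \circ A^{-1}$, we compute
\begin{equation*}
  G(Aw) = A\bigl(F(A^{-1}Aw)\bigr) = A\bigl(F(w)\bigr) \in A\bigl(F(W)\bigr) \subseteq AW,
\end{equation*}
where the last inclusion uses $F(W) \subseteq W$ and hence $A(F(W)) \subseteq AW$. This establishes that $AW$ is $G$-invariant. Note I only used that $G = A F A^{-1}$ and that $A$ is well-defined on all of $\R^n$; no properties of $F$ beyond being a map $\R^n \to \R^n$ are needed.

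For the converse direction, the cleanest route is to observe that $F = A^{-1} \circ G \circ A = A^{-1} \circ G \circ (A^{-1})^{-1}$, so $F$ is the push-forward of $G$ by the invertible linear map $A^{-1}$. Applying the paragraph above verbatim with $(F,G,A)$ replaced by $(G,F,A^{-1})$ shows that if $V$ is $G$-invariant then $A^{-1}V$ is $F$-invariant. Now combine: if $W$ is $F$-invariant then $AW$ is $G$-invariant (first part), and conversely every $G$-invariant subspace $V$ equals $A(A^{-1}V)$ with $A^{-1}V$ being $F$-invariant (second part), so $W \mapsto AW$ is surjective onto the $G$-invariant subspaces; it is injective because $A$ is injective. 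Hence it is a bijection, as claimed.

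There is no serious obstacle here — the statement is essentially a change-of-variables tautology — so the ``hard part'' is only to present it without circular phrasing: one must be careful to prove both inclusions (invariant subspaces map \emph{to} invariant subspaces, and \emph{every} invariant subspace downstairs arises this way) rather than just the forward implication, since a priori $W \mapsto AW$ could fail to be onto the target collection. Using the symmetry $F \leftrightarrow G$ under $A \leftrightarrow A^{-1}$ handles this with no extra work.
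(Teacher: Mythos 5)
Your proof is correct and follows essentially the same route as the paper's: the one-line computation $G(AW)=AF(A^{-1}AW)=AF(W)\subseteq AW$ for the forward direction, combined with the symmetry $F=A^{-1}\circ G\circ A$ to handle the converse. No issues.
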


\begin{proof}
It is clear that the assignment $W \mapsto AW$ is a bijection of the set of subspaces of $\R^n$, with inverse given by $W \mapsto A^{-1}W$. As we may also write $F = A^{-1}\circ G \circ A$, it suffices to show that $AW$ is invariant under $G$ if  $W$ is invariant under $F$. However, this follows immediately as $G(AW) = AF(A^{-1}AW) = AF(W) \subseteq AW$. This completes the proof.
\end{proof}

\noindent Note that for $W$ a synchrony space (so defined by identities of the form $x_s = x_t$ for d-simplices $s$ and $t$) and $A$ a diagonal map with diagonal entries in $\{1, -1\}$, the space $AW$ might only be an anti-synchrony space. We start with some useful definitions and notation. 

\begin{defi}
Let $\mathcal{K}$ be a finite set of colors, and denote by $\Z\mathcal{K}$ the free $\Z$-module with basis $\mathcal{K}$. That is, elements of $\Z\mathcal{K}$ are given by formal sums of the form
\[\sum_{k \in \mathcal{K}} a_k k \, \text{ with }  \, a_k \in \Z \, \text{ for all } \,  k \in \mathcal{K}.\]
Addition in $\Z\mathcal{K}$ is component-wise, i.e. 
\begin{equation}
\sum_{k \in \mathcal{K}} a_k k + \sum_{k \in \mathcal{K}} b_k k = \sum_{k \in \mathcal{K}} (a_k+b_k) k \, ,
\end{equation}
and we may multiply by an element $r \in \Z$ by setting 
\begin{equation}
r\left(\sum_{k \in \mathcal{K}} a_k k\right) = \sum_{k \in \mathcal{K}} ra_k k \, .
\end{equation}
We also denote by $\mathbf{B}_{\mathcal{K}} \subseteq \Z\mathcal{K}$ the finite subset consisting of only the elements $k := 1k$ and $-k := -1k$ for all $k \in \mathcal{K}$, together with $0 := \sum_{k \in \mathcal{K}} 0 k$. \\
An \emph{anti-coloring} of the $d$-simplices is an assignment $K_d: X_d \rightarrow \mathbf{B}_{\mathcal{K}}$.\\

\noindent Given a $d$-simplex $t \in X_{d}$ and a $(d-1)$-simplex $e \in X_{d-1}$, we define the \emph{sign induced on $e$ by $t$} as
\begin{align}
I_{e}^t = \begin{cases}
0 &\text{ if } e \text{ is not in the boundary } t; \\ 
1 &\text{ if } e \text{ appears in } \partial_{d} t \text { with a positive sign; } \\
-1 &\text{ if } e \text{ appears in } \partial_{d} t \text { with a negative sign. } 
\end{cases}
\end{align}
Note that another way of defining $I_{e}^t$ is by stating that
\begin{equation}\label{maincharacterIet}
\partial_{d} t = \sum_{e \in X_{d-1}} I_{e}^t e\, ,
\end{equation}
which makes it straightforward to find $I_{e}^t$ by seeing what vertex is left out of $t$ to obtain $e$ (if $I_{e}^t \not= 0$). In particular, we simply have $I_e^t = (B_d)_{et}$, though we use a separate notation to emphasize the simple construction of the scalar $I_e^t$ (which is independent of the rest of the simplex) and to avoid cluttered notation. \\

\noindent Given an anti-coloring  $K_d$ on $X_d$, we define the \emph{induced anti-colorings} $\ccup{d}$ and $\ccdown{d}$ on $X_{d+1}$ and $X_{d-1}$, respectively. The coloring $\ccup{d}: X_{d+1} \rightarrow \Z\mathcal{K}$ is given by
\begin{equation}
\ccup{d}(t) = \sum_{e \in X_{d}} I_{e}^t K_d(e)\, ,
\end{equation}
for all $t \in X_{d+1}$. In other words, $\ccup{d}$ counts the colors of the boundary simplices in the signed way induced by $\partial_{d+1}$. Similarly, we define $\ccdown{d}: X_{d-1} \rightarrow \Z\mathcal{K}$  by
\begin{equation}
\ccdown{d}(v) = \sum_{e \in X_{d}} I_{v}^e K_d(e)\, .
\end{equation}
Note that $\ccup{d}$ and $\ccdown{d}$ in general do not take values in $\mathbf{B}_{\mathcal{K}}$. 
\hfill $\triangle$
\end{defi}

\begin{remk}
If we write $\bar{K}_d = (\dots, K_d(e), \dots) \in \Z\mathcal{K}^{X_d}$ for the corresponding vector of colors, with similar definitions for $\ccupv{d}$ and $\ccdownv{d}$, then we get the compact descriptions
\begin{equation}
{\ccupv{d}} = B_{d+1}^\intercal \bar{K}_d \text{ and }  \ccdownv{d} = B_{d} \bar{K}_d\, .
\end{equation}
These are very useful for calculating induced anti-colorings directly, and provide a heuristic interpretation of these colorings as (co)-boundaries of the given coloring. \hfill $\triangle$
\end{remk}

The main technical definition we will need is the following. The terminology is motivated by that of a similar construction by Stewart et al~\cite{stewart2003symmetry}.

\begin{defi}\label{upanddownbalanced}
Let $\mathcal{P} = (P_1, \dots, P_k)$ be a partition of $X_{d+1}$. An anti-coloring $K_d$ of $X_d$ is called \emph{up-balanced with respect to $\mathcal{P}$} if the following holds:
\begin{enumerate}
\item If $e,f \in X_d$ satisfy $K_d(e) = K_d(f)$ then for all classes $P_i$ of $\mathcal{P}$ and all elements $a \in \Z\mathcal{K}$ we have
\begin{equation}\label{checkupsyn}
\sum\limits_{\substack{t \in P_i \\ \ccup{d}(t) = a}} I_{e}^t - \sum\limits_{\substack{t \in P_i \\ \ccup{d}(t) = -a}} I_{e}^t = \sum\limits_{\substack{t \in P_i \\ \ccup{d}(t) = a}} I_{f}^t - \sum\limits_{\substack{t \in P_i \\ \ccup{d}(t) = -a}} I_{f}^t \, .
\end{equation}
\item If $e,f \in X_d$ satisfy $K_d(e) = -K_d(f)$ then for all classes $P_i$ of $\mathcal{P}$ and all elements $a \in \Z\mathcal{K}$ we have
\begin{equation}
\sum\limits_{\substack{t \in P_i \\ \ccup{d}(t) = a}} I_{e}^t - \sum\limits_{\substack{t \in P_i \\ \ccup{d}(t) = -a}} I_{e}^t = - \sum\limits_{\substack{t \in P_i \\ \ccup{d}(t) = a}} I_{f}^t + \sum\limits_{\substack{t \in P_i \\ \ccup{d}(t) = -a}} I_{f}^t \, .
\end{equation}
\end{enumerate}
Similarly, let $\mathcal{Q} = (Q_1, \dots, Q_l)$ be a partition of $X_{d-1}$. An anti-coloring $K_d$ of $X_d$ is called \emph{down-balanced with respect to $\mathcal{Q}$} if the following holds:
\begin{enumerate}
\item If $e,f \in X_d$ satisfy $K_d(e) = K_d(f)$ then for all classes $Q_i$ of $\mathcal{Q}$ and all elements $a \in \Z\mathcal{K}$ we have
\begin{equation}
\sum\limits_{\substack{v \in Q_i \\ \ccdown{d}(v) = a}} I_{v}^e - \sum\limits_{\substack{v \in Q_i \\ \ccdown{d}(v) = -a}} I_{v}^e = \sum\limits_{\substack{v \in Q_i \\ \ccdown{d}(v) = a}} I_{v}^f - \sum\limits_{\substack{v \in Q_i \\ \ccdown{d}(v) = -a}} I_{v}^f\, .
\end{equation}
\item If $e,f \in X_d$ satisfy $K_d(e) = -K_d(f)$ then for all classes  $Q_i$ of $\mathcal{Q}$ and all elements $a \in \Z\mathcal{K}$ we have
\begin{equation}
\sum\limits_{\substack{v \in Q_i \\ \ccdown{d}(v) = a}} I_{v}^e - \sum\limits_{\substack{v \in Q_i \\ \ccdown{d}(v) = -a}} I_{v}^e = - \sum\limits_{\substack{v \in Q_i \\ \ccdown{d}(v) = a}} I_{v}^f + \sum\limits_{\substack{v \in Q_i \\ \ccdown{d}(v) = -a}} I_{v}^f\, .
\end{equation} \hfill $\triangle$
\end{enumerate}
\end{defi}

\noindent Note that if $K_d(e) = 0$ we have $K_d(e) = K_d(e)$ and $K_d(e) = -K_d(e)$, so that both points have to be satisfied for $e = f$. In case of up-balanced with respect to $\mathcal{P}$ we then precisely require
\begin{equation}
\sum\limits_{\substack{t \in P_i \\ \ccup{d}(t) = a}} I_{e}^t - \sum\limits_{\substack{t \in P_i \\ \ccup{d}(t) = -a}} I_{e}^t = 0 \, \,  \text{ and so } \quad \sum\limits_{\substack{t \in P_i \\ \ccup{d}(t) = a}} I_{e}^t = \sum\limits_{\substack{t \in P_i \\ \ccup{d}(t) = -a}} I_{e}^t \, ,
\end{equation}
for all classes $P_i$ and all $a \in \Z\mathcal{K}$, with a similar equality for down-balanced colorings. \\

\begin{remk}
The expression 
\begin{equation}
\sum\limits_{\substack{t \in P_i \\ \ccup{d}(t) = a}} I_{e}^t - \sum\limits_{\substack{t \in P_i \\ \ccup{d}(t) = -a}} I_{e}^t 
\end{equation}
from Definition \ref{upanddownbalanced} can be obtained from $B_{d+1}$ and $\ccupv{d}$ as follows. Denote by $[\ccupv{d}]_{a, P_i}$ the color-vector obtained from $\ccupv{d}$ by setting to zero all entries belonging to simplices $t$ that are not in $P_i$ or that do not satisfy $\ccup{d}(t) \in \{a, -a\}$. In other words, we set
\begin{equation}
 ([\ccupv{d}]_{a, P_i})_t = \delta_{\ccup{d}(t) \in \{a, -a\}} \delta_{t \in P_i}\ccup{d}(t) = \delta_{\ccup{d}(t) = a} \delta_{t \in P_i}a -  \delta_{\ccup{d}(t) = -a} \delta_{t \in P_i} a\, .
 \end{equation}
for all $t \in X_{d+1}$. We then simply have 
 \begin{align}
 (B_{d+1}([\ccupv{d}]_{a, P_i}))_e &= \sum_{t \in X_{d+1}} (B_{d+1})_{et} ([\ccupv{d}]_{a, P_i})_t \\ \nonumber
& =\sum_{t \in X_{d+1}} I_e^t (\delta_{\ccup{d}(t) = a} \delta_{t \in P_i}a -  \delta_{\ccup{d}(t) = -a} \delta_{t \in P_i} a) = \bigg(\sum\limits_{\substack{t \in P_i \\ \ccup{d}(t) = a}} I_{e}^t - \sum\limits_{\substack{t \in P_i \\ \ccup{d}(t) = -a}} I_{e}^t \bigg)a\, .
 \end{align}
for all $e \in X_d$. The condition of being up-balanced is therefore equivalent to certain equalities among the entries of the color-vectors $B_{d+1}([\ccupv{d}]_{a, P_i})$, for all $a \in \Z\mathcal{K}$ and all classes $P_i$. In the same way, we find
 \begin{align}
 (B_{d}^{\intercal}([\ccdownv{d}]_{a, Q_i}))_e = \bigg(\sum\limits_{\substack{v \in Q_i \\ \ccdown{d}(v) = a}} I_{v}^e - \sum\limits_{\substack{v \in Q_i \\ \ccdown{d}(v) = -a}} I_{v}^e \bigg)a\, ,
 \end{align}
for all $e \in X_d$, where $[\ccdownv{d}]_{a, Q_i}$ is obtained from $\ccdownv{d}$ by setting all entries to zero, except for those corresponding to $v \in Q_i$ with $\ccdown{d}(v) \in \{-a,a\}$. \hfill $\triangle$
\end{remk}

\noindent To an anti-coloring $K_d$ on $X_d$ we may associate an anti-synchrony space $\Delta_{K_d} \subseteq C^d(X)$ defined by the equalities $x_s = x_t$ if $K_d(s) = K_d(t)$, $x_s = -x_t$ if $K_d(s) = -K_d(t)$ and so $x_s  = 0$ if $K_d(s) = 0$. Conversely, any anti-synchrony space has a corresponding anti-coloring in this way. 

\begin{remk}
In exactly the same way as for anti-synchrony spaces, an anti-coloring $K_d$ on $X_d$ induces a subspace $\Delta^{\mathcal{K}}_{K_d}$ of $\Z\mathcal{K}^{X_d}$. Again, we say that $x \in \Delta^{\mathcal{K}}_{K_d}$ if $x_s = x_t$ whenever $K_d(s) = K_d(t)$ and $x_s = -x_t$ if $K_d(s) = -K_d(t)$. This gives a very brief description of up- and down-balanced colorings: $K_d$ is up-balanced with respect to $\mathcal{P} = (P_1, \dots, P_k)$ if  $B_{d+1}([\ccupv{d}]_{a, P_i}) \in \Delta^{\mathcal{K}}_{K_d}$ for all classes $P_i$ and all $a \in \Z\mathcal{K}$. An algorithm for determining whether $K_d$ is up-balanced with respect to $\mathcal{P}$ is therefore given by:
\begin{enumerate}
\item Construct the induced coloring ${\ccupv{d}} = B_{d+1}^\intercal \bar{K}_d$;
\item For all classes $P_i$ and all $a \in \Z\mathcal{K}$ (with $a$ appearing in ${\ccupv{d}}$), construct $[\ccupv{d}]_{a, P_i}$ by setting all entries to zero corresponding to $t \in X_{d+1}$ that are not in $P_i$ or that satisfy $\ccup{d}(t) \notin \{a, -a\}$;
\item Verify if $B_{d+1}([\ccupv{d}]_{a, P_i}) \in \Delta^{\mathcal{K}}_{K_d}$ for all $[\ccupv{d}]_{a, P_i}$. If so, $K_d$ is up-balanced.
\end{enumerate}
Similarly, $K_d$ is down-balanced with respect to $\mathcal{Q} = (Q_1, \dots, Q_l)$ if  $B_{d}^{\intercal}([\ccdownv{d}]_{a, Q_i}) \in \Delta^{\mathcal{K}}_{K_d}$ for all classes $Q_i$ and all $a \in \Z\mathcal{K}$. We get the analogous algorithm for down-balanced colorings:
\begin{enumerate}
\item Construct the induced coloring ${\ccdownv{d}} = B_{d} \bar{K}_d$;
\item For all classes $Q_i$ and all $a \in \Z\mathcal{K}$ (with $a$ appearing in ${\ccdownv{d}}$), construct $[\ccdownv{d}]_{a, Q_i}$ by setting all entries to zero corresponding to $v \in X_{d-1}$ that are not in $Q_i$ or that satisfy $\ccdown{d}(v) \notin \{a, -a\}$;
\item Verify if $B_{d}^{\intercal}([\ccdownv{d}]_{a, Q_i}) \in \Delta^{\mathcal{K}}_{K_d}$ for all $[\ccdownv{d}]_{a, Q_i}$. If so, $K_d$ is down-balanced.  \hfill $\triangle$
\end{enumerate}
\end{remk}

The following result classifies the anti-colorings for which the corresponding anti-synchrony space is invariant.

\begin{thr}\label{mainonantisynchrony}
Let $K_d$ be an anti-coloring on $X_d$ and denote by $\Delta_{K_d} \subseteq C_d(X)$ the corresponding anti-synchrony subspace. We fix partitions $\mathcal{P}$ and $\mathcal{Q}$ of $X_{d+1}$ and $X_{d-1}$, respectively. Then
\begin{itemize}
\item The space $\Delta_{K_d} $ is invariant under $\gup d = B_{d+1}\fup d B_{d+1}^\intercal$ for all $\fup d  \in \cw {\mathcal{P}}{d+1}$, if and only if $K_d$ is up-balanced with respect to $\mathcal{P}$.
\item The space $\Delta_{K_d} $ is invariant under $\gdown d := B_{d}^\intercal\fdown d B_{d}$ for all $\fdown d  \in \cw {\mathcal{Q}}{d-1}$, if and only if $K_d$ is down-balanced with respect to $\mathcal{Q}$.
\end{itemize}
\end{thr}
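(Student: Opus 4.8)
The plan is to prove the two bullet points symmetrically; I will describe the ``up'' case, the ``down'' case being identical with $B_d^\intercal$ in place of $B_{d+1}$ and $\ccdown d$ in place of $\ccup d$. The key observation is that since $\fup d$ is componentwise with odd components that are constant on each block $P_i$ of $\mathcal P$, the value of $\gup d(x)$ in coordinate $e$ can be written as
\[
  (\gup d(x))_e = \sum_{i=1}^k \sum_{t \in P_i} I_e^t\, \phi_i\big( (B_{d+1}^\intercal x)_t \big),
\]
where $\phi_i = (\fup d)_t$ is the common odd scalar function attached to block $P_i$, and $(B_{d+1}^\intercal x)_t = \sum_{e'} I_{e'}^t x_{e'}$. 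The point of the anti-coloring $K_d$ and the associated space $\Delta_{K_d}$ is: $x \in \Delta_{K_d}$ exactly means $x_e$ depends only on the ``color'' $K_d(e)$ up to sign, i.e.\ $x_e = c(K_d(e))$ for some odd function $c$ on colors (with $x_e = 0$ whenever $K_d(e)=0$). For such $x$, the argument $(B_{d+1}^\intercal x)_t = \sum_{e'} I_{e'}^t c(K_d(e')) = c\big(\ccup d(t)\big)$ by the definition of the induced coloring $\ccup d(t) = \sum_{e'} I_{e'}^t K_d(e')$ — here I would need the small lemma that $c$ extends $\Z$-linearly to $\Z\mathcal K$ and agrees with the signed sum, which is immediate from linearity. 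Crucially, $c$ is odd, so $\phi_i(c(a))$ depends on $t$ only through whether $\ccup d(t) = a$ or $=-a$, and $\phi_i(c(-a)) = -\phi_i(c(a))$ since both $\phi_i$ and $c$ are odd.

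With this in hand, for $x \in \Delta_{K_d}$ I would regroup the sum defining $(\gup d(x))_e$ according to the value $a = \ccup d(t) \in \Z\mathcal K$ (up to sign) and the block $P_i$, obtaining
\[
  (\gup d(x))_e = \sum_i \sum_{a} \phi_i(c(a)) \bigg( \sum_{\substack{t \in P_i \\ \ccup d(t) = a}} I_e^t - \sum_{\substack{t \in P_i \\ \ccup d(t) = -a}} I_e^t \bigg),
\]
where the outer sum over $a$ ranges over one representative of each $\{a,-a\}$ pair. Now $\gup d(x) \in \Delta_{K_d}$ means precisely: $(\gup d(x))_e = (\gup d(x))_f$ whenever $K_d(e)=K_d(f)$, $(\gup d(x))_e = -(\gup d(x))_f$ whenever $K_d(e) = -K_d(f)$, and $(\gup d(x))_e = 0$ when $K_d(e)=0$. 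Comparing the displayed formula for $e$ and for $f$, and using that the coefficients $\phi_i(c(a))$ can be made arbitrary (by choosing $\fup d \in \cw{\mathcal P}{d+1}$ appropriately — this is where I must argue that the maps $a \mapsto \phi_i(c(a))$ realize all possible sign patterns as $\fup d$ and the coloring vary, or more carefully, for the ``if'' direction use the balance equations term-by-term, and for ``only if'' choose $\phi_i$ to isolate a single $(i,a)$ contribution), the equality $(\gup d(x))_e = \pm(\gup d(x))_f$ for all admissible $\fup d$ is equivalent to
\[
  \sum_{\substack{t \in P_i \\ \ccup d(t) = a}} I_e^t - \sum_{\substack{t \in P_i \\ \ccup d(t) = -a}} I_e^t = \pm \bigg(\sum_{\substack{t \in P_i \\ \ccup d(t) = a}} I_f^t - \sum_{\substack{t \in P_i \\ \ccup d(t) = -a}} I_f^t\bigg)
\]
for every $i$ and every $a$, with sign $+$ when $K_d(e)=K_d(f)$ and $-$ when $K_d(e) = -K_d(f)$. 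That is exactly the up-balanced condition of Definition~\ref{upanddownbalanced}, including the degenerate $K_d(e)=0$ case which forces both signs and hence the expression to vanish.

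The main obstacle I anticipate is the ``only if'' direction: I must show that if the balance equations fail for some block $P_i$ and some color-value $a_0$, then there is an actual $\fup d \in \cw{\mathcal P}{d+1}$ (componentwise, odd, constant on blocks) witnessing non-invariance. The natural move is to pick $\phi_i$ for $i \neq i_0$ to be zero and $\phi_{i_0}$ to be an odd function that vanishes on $c(a)$ for all relevant color-values except $a_0, -a_0$ and is nonzero there — but one must check such color-values are distinct reals so the function can separate them, and handle the subtlety that different colors $a$ might accidentally give $c(a) = c(a')$ for the particular coloring $c$ chosen; choosing $c$ generically (injective up to sign on the finitely many induced colors $\{\ccup d(t)\}$) resolves this, and the freedom to pick any representative $c$ of the color-class is exactly what makes the argument go through. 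I would also need to confirm that $\Delta_{K_d}$ is genuinely invariant, not merely ``invariant on a spanning set,'' but since $\Delta_{K_d}$ is a linear subspace and the condition is checked at every point $x \in \Delta_{K_d}$, this is automatic once the coordinate identities above hold for all such $x$. The ``down'' bullet is then a verbatim transcription using the identity $\ccdown d(v) = \sum_{e} I_v^e K_d(e)$ and the formula $(\gdown d(x))_e = \sum_i \sum_{v \in Q_i} I_v^e\, \psi_i((B_d x)_v)$, so no new ideas are required.
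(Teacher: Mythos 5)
Your proposal is correct and follows essentially the same route as the paper's proof: expand $(\gup d(x))_e$ coordinatewise, use oddness and the identification $x_e = c(K_d(e))$ (the paper's $x_a$ notation) to collapse the argument of each $\phi_i$ to the induced color $\ccup d(t)$, regroup over sign-classes $\{a,-a\}$, and then exploit the freedom to prescribe the odd functions $\phi_i$ at a generic point of $\Delta_{K_d}$ (where distinct induced colors give distinct nonzero values) to get the "only if" direction. The subtleties you flag — genericity of $c$, isolating a single $(i,a)$ term, and the $K_d(e)=0$ degenerate case — are exactly the ones the paper handles, and in the same way.
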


Before we prove Theorem \ref{mainonantisynchrony}, we first illustrate the result with some examples.

\begin{ex}\label{exampledowncoloredwithdiamond}
We return to the first system considered in Example \ref{examlabeleffect2}, corresponding to the left side of Figure  \ref{fig_label}  and given by 
\begin{align}\label{leftODEtriangles2invar}
B_2^\intercal \fdown2B_2(x) 
&= \left( \begin{array}{l}
2f(x_1) +  f(x_1 - x_2) \\
2f(x_2) + f(x_2 - x_1) 
\end{array}  \right) \, .
\end{align}
Here $x_1$ describes the state of the $2$-simplex $[1,2,3]$ and $x_2$ that of  $[1,3, 4]$. They communicate via the edges, which are given the trivial partition $\mathcal{Q} = (X_1)$. Only six colorings correspond to different possible anti-synchrony spaces. These are 
\begin{align}
&K_{2,1}([1,2,3]) = c, K_{2,1}([1,3,4]) = d; \quad
&K_{2,2}([1,2,3]) = c, K_{2,2}([1,3,4]) = c; \\ \nonumber
&K_{2,3}([1,2,3]) = c, K_{2,3}([1,3,4]) = -c; \quad
&K_{2,4}([1,2,3]) = 0, K_{2,4}([1,3,4]) = 0; \\ \nonumber
&K_{2,5}([1,2,3]) = c, K_{2,5}([1,3,4]) = 0; \quad
&K_{2,6}([1,2,3]) = 0, K_{2,6}([1,3,4]) = c, 
\end{align}
for distinct colors $c,d$. Note that $K_{2,1}$ is trivially down-balanced, with corresponding anti-synchrony space the full phase space. For $K_{2,2}$ and $K_{2,3}$ we calculate the corresponding induced colorings by
\begin{equation}
{\ccdownv{2,2}} = \begin{pmatrix}
1 & 0 \\
1 & 0 \\
-1 & 1 \\
0 & -1 \\
0 & 1 \\
\end{pmatrix}
\begin{pmatrix}
c \\  c
\end{pmatrix} = 
\begin{pmatrix}
c \\
c \\
0\\
-c \\
c \\
\end{pmatrix} 
\text{ and }
{\ccdownv{2,3}} = \begin{pmatrix}
1 & 0 \\
1 & 0 \\
-1 & 1 \\
0 & -1 \\
0 & 1 \\
\end{pmatrix}
\begin{pmatrix}
c \\  -c
\end{pmatrix} = 
\begin{pmatrix}
c \\
c \\
-2c\\
c \\
-c \\
\end{pmatrix} \, .
\end{equation}
Verifying if $K_{2,2}$ is down-balanced is easiest, as ${\ccdownv{2,2}}$ only has entries in $\{-c,0,c\}$. We thus have $[{\ccdownv{2,2}}]_{c, X_1} = {\ccdownv{2,2}}$, and a calculation shows that
\begin{equation}
B_2^{\intercal}[{\ccdownv{2,2}}]_{c, X_1} = \begin{pmatrix}
1 & 1 & -1 & 0 & 0 \\ 
0 & 0 & 1 & -1 & 1  \\
\end{pmatrix}
\begin{pmatrix}
c \\
c \\
0\\
-c \\
c \\
\end{pmatrix} =
\begin{pmatrix}
2c \\
2c \\
\end{pmatrix} \in \Delta_{K_{2,2}}^{\mathcal{K}}\, .
\end{equation}
This shows that $K_{2,2}$ is down-balanced, and one verifies that $\Delta_{K_{2,2}} = \{x_1 = x_2\}$ is indeed invariant for any system of the form \eqref{leftODEtriangles2invar}. For  $K_{2,3}$ we only have to consider 
\begin{equation}
[{\ccdownv{2,3}}]_{c, X_1} = \begin{pmatrix}
c \\
c \\
0\\
c \\
-c \\
\end{pmatrix} \, \text{ and } \,
[{\ccdownv{2,3}}]_{2c, X_1} = \begin{pmatrix}
0 \\
0 \\
-2c\\
0 \\
0 \\
\end{pmatrix} \, .
\end{equation}
We find
\begin{equation}
B_2^{\intercal}[{\ccdownv{2,3}}]_{c, X_1} = \begin{pmatrix}
1 & 1 & -1 & 0 & 0 \\ 
0 & 0 & 1 & -1 & 1  \\
\end{pmatrix}
\begin{pmatrix}
c \\
c \\
0\\
c \\
-c \\
\end{pmatrix} =
\begin{pmatrix}
2c \\
-2c \\
\end{pmatrix} \in \Delta_{K_{2,3}}^{\mathcal{K}}\, ,
\end{equation}
and 
\begin{equation}
B_2^{\intercal}[{\ccdownv{2,3}}]_{2c, X_1} = \begin{pmatrix}
1 & 1 & -1 & 0 & 0 \\ 
0 & 0 & 1 & -1 & 1  \\
\end{pmatrix}
\begin{pmatrix}
0 \\
0 \\
-2c\\
0 \\
0 \\
\end{pmatrix} =
\begin{pmatrix}
2c \\
-2c \\
\end{pmatrix} \in \Delta_{K_{2,3}}^{\mathcal{K}}\, .
\end{equation}
Hence, $K_{2,3}$ is also down-balanced. Correspondingly, the space $\{x_1 = -x_2\}$ is invariant under any system of the form \eqref{leftODEtriangles2invar} (provided $f$ is odd).  The coloring $K_{2,4}$ corresponds to the zero-space, which is clearly invariant. As for $K_{2,5}$, we find
\begin{equation}
{\ccdownv{2,5}} = \begin{pmatrix}
1 & 0 \\
1 & 0 \\
-1 & 1 \\
0 & -1 \\
0 & 1 \\
\end{pmatrix}
\begin{pmatrix}
c \\  0
\end{pmatrix} = 
\begin{pmatrix}
c \\
c \\
-c\\
0 \\
0 \\
\end{pmatrix} = [{\ccdownv{2,5}}]_{c, X_1}\, .
\end{equation}
However, 
\begin{equation}
B_2^{\intercal}[{\ccdownv{2,5}}]_{c, X_1} = \begin{pmatrix}
1 & 1 & -1 & 0 & 0 \\ 
0 & 0 & 1 & -1 & 1  \\
\end{pmatrix}
\begin{pmatrix}
c \\
c \\
-c\\
0 \\
0 \\
\end{pmatrix} =
\begin{pmatrix}
3c \\
-c \\
\end{pmatrix} \notin \Delta_{K_{2,5}}^{\mathcal{K}}\, .
\end{equation}
This shows that $K_{2,5}$ is not down-balanced. A similar argument shows that $K_{2,6}$ is not down-balanced, so that the only anti-synchrony spaces invariant for any system of the form \eqref{leftODEtriangles2invar} are $\R^2, \{x_1 = x_2\},  \{x_1 = -x_2\}$ and $0$. \hfill $\triangle$
\end{ex}

\begin{ex}
We return to the setting of Example \ref{exampledowncoloredwithdiamond}, but we now consider the non-trivial partition of $X_1$ given by $\mathcal{Q} = (Q_1, Q_2)$ with $Q_1 = \{e_1\}$ and $Q_2 = \{e_2, e_3, e_4, e_5\}$. A general system of the form $B_2^\intercal \fdown2B_2$ with $ \fdown2 \in \cw {\mathcal{Q}}{1}$ is given by
\begin{align}\label{leftODEtriangles2invar-2qq}
B_2^\intercal \fdown2B_2(x) 
&= \left( \begin{array}{l}
f(x_1) + g(x_1) + f(x_1 - x_2) \\
2f(x_2) + f(x_2 - x_1) 
\end{array}  \right) \, ,
\end{align}
for odd functions $f,g: \R \rightarrow \R$. The situation of Example \ref{exampledowncoloredwithdiamond} can be seen as a special case of what we consider here, by setting $f = g$. Hence, we only have to consider the down-balanced colorings from this previous example. The spaces $\R^2$ and $\{0\}$ are trivially invariant for any system of the form \eqref{leftODEtriangles2invar-2qq}, so that $K_{2,1}$ and $K_{2,4}$ remain down-balanced. Recall that
\begin{equation}
{\ccdownv{2,2}} = 
\begin{pmatrix}
c \\
c \\
0\\
-c \\
c \\
\end{pmatrix} 
\text{ and }
{\ccdownv{2,3}} =  
\begin{pmatrix}
c \\
c \\
-2c\\
c \\
-c \\
\end{pmatrix}, \text{ so that }
[{\ccdownv{2,2}}]_{c, Q_1} =  [{\ccdownv{2,3}}]_{c, Q_1} =  
\begin{pmatrix}
c \\
0 \\
0\\
0 \\
0 \\
\end{pmatrix}\, .
\end{equation}
We have 
\begin{equation}
B_2^{\intercal}[{\ccdownv{2,2}}]_{c, Q_1} = B_2^{\intercal}[{\ccdownv{2,3}}]_{c, Q_1} = \begin{pmatrix}
1 & 1 & -1 & 0 & 0 \\ 
0 & 0 & 1 & -1 & 1  \\
\end{pmatrix}
\begin{pmatrix}
c \\
0 \\
0\\
0 \\
0 \\
\end{pmatrix} =
\begin{pmatrix}
c \\
0 \\
\end{pmatrix} \, ,
\end{equation}
which is neither an element of $\Delta_{K_{2,2}}^{\mathcal{K}} = \{ x_1 = x_2\}$ nor of $\Delta_{K_{2,3}}^{\mathcal{K}} = \{ x_1 = -x_2\}$. Hence, $K_{2,2}$ and $K_{2,3}$ are not down-balanced. \hfill $\triangle$
\end{ex}

\begin{ex}
To further illustrate Theorem \ref{mainonantisynchrony}, let us consider the classical case where $n$ nodes $\{1, \dots, n\}$ are communicating via a set of edges $E \subseteq \{[i,j] \mid i,j \in \{1, \dots, n\}\, , i<j\}$. We furthermore assume $\mathcal{P} = \{E\}$ is the trivial partition, so that we are interested in anti-synchrony spaces that are invariant for all ODEs of the form
\begin{equation}\label{simpleexampleequatiie}
\dot{x}_i = \sum_{[i,j]/[j,i] \in E} f(x_i - x_j) \text { for all } i \in \{1, \dots, n\}\, ,
\end{equation}
with $x_i \in \R$ and for odd functions $f: \R \rightarrow \R$. We will restrict our attention to synchrony spaces, so those given by 
\begin{equation}
\Delta_{K_0} = \{x \in \R^n \mid x_i = x_j \text{ if } K_0(i) = K_0(j)\}
\end{equation}
for some map $K_0$ from the nodes to a set of colors $\mathcal{K}$. \\
The induced coloring  $\ccup{0}$ is given by  $\ccup{0}([i,j]) = K_0(i) - K_0(j)$, so that in particular $\ccup{0}([i,j]) = 0$ if and only if $K_0(i) = K_0(j)$. We will consider Equation \eqref{checkupsyn} for each color combination $a$, where we note that for $a = 0$ there is nothing to check. \\
Let us therefore fix a node $i$ with color $k =K_0(i)$, another color $\tilde{k}$, and set $a = k-\tilde{k}$. The only way in which we can have $I^e_i \not= 0$ and $\ccup{0}(e) = a$ for an edge $e$ is when $e = [i,j]$ where $i<j$ and $K_0(j) = \tilde{k}$. This is because  $I^e_i \not= 0$ forces $i$ to appear in $e$, and $\ccup{0}(e) = a$ means $i$ appears first in $e$. We then have $I^e_i = 1$, and as a result we find 
\begin{equation}\label{checkupsynex1}
\sum\limits_{\substack{e \in E \\ \ccup{0}(e) = a}} I_{i}^e = \#\{[i,j] \in E\, , K_0(j) = \tilde{k}\} \, .
\end{equation}
Similarly, we only have $I^e_i \not= 0$ and $\ccup{0}(e) = -a$ for an edge $e$ when $e = [j,i]$ with $K_0(j) = \tilde{k}$, in which case $I_{i}^e = -1$. Hence, 
\begin{equation}\label{checkupsynex2}
\sum\limits_{\substack{e \in E \\ \ccup{0}(e) = -a}} I_{i}^e = -\#\{[j,i] \in E\, , K_0(j) = \tilde{k}\} \, .
\end{equation}
We therefore find that
\begin{equation}\label{checkupsynex3}
\sum\limits_{\substack{e \in E \\ \ccup{0}(e) = a}} I_{i}^e - \sum\limits_{\substack{e \in E \\ \ccup{0}(e) = -a}} I_{i}^e 
\end{equation}
counts precisely the number of edges between $i$ and a node of color $\tilde{k}$.  Theorem \ref{mainonantisynchrony} thus tells us that $K_0$ defines an invariant synchrony space if and only if for any two nodes of the same color $k$, and for any color $\tilde{k} \not= k$, the number of connections to a node of color $\tilde{k}$ is the same. \\
This result is well-known and can also be seen from Equation \eqref{simpleexampleequatiie} directly. To this end, write $x = (x_1, \dots, x_n) = (x_{K_0(1)}, \dots, x_{K_0(n)})$ for a general point in $\Delta_{K_0}$. Restricted to this space, the equations of motion for $x_i$ become 
\begin{equation}\label{checkupsynex3q}
\dot{x}_i = \sum_{[i,j]/[j,i] \in E} f(x_{K_0(i)} - x_{K_0(j)})\, ,
\end{equation}
which counts the number of connections between $i$ and a node of a different color $\tilde{k}$ as the number of terms  $f(x_{K_0(i)} - x_{\tilde{k}})$. Hence, we indeed get identical equations of motion for $x_i$ and $x_j$ with $K_0(i) = K_0(j)$, precisely when the number of connections to nodes of other colors agrees. \\
The case of general anti-synchrony spaces is slightly more involved, as elements in $\Z\mathcal{K}$ of the form $a = \pm 2k$ have to be taken into account as well. This corresponds to the case where we have an edge between nodes of opposing colors, which accounts for a term $f(\pm2x_k)$ in Equation \eqref{checkupsynex3q}. \hfill $\triangle$
\end{ex}

In the proof of Theorem \ref{mainonantisynchrony} below, given an anti-coloring $K_d$ we denote a point on the corresponding anti-synchrony space $\Delta_{K_d}$ by $x = (x_1, \dots, x_n) = (x_{K_d(1)}, \dots, x_{K_d(n)})$, with the convention that $x_{-k} = -x_k$ for $k \in \mathbf{B}_{\mathcal{K}}$ (and so $x_0 = 0$). Generalizing this, given $x \in \Delta_{K_d}$ we may write $x_a := \sum_{k \in \mathcal{K}} a_k x_k$ for any element $a =  \sum_{k \in \mathcal{K}} a_k k \in \Z\mathcal{K}$ with $a_k \in \Z$. More precisely, this is under the assumption that $K_d$ reaches all colors in $\mathcal{K}$, which may be assumed as additional colors have no meaning. It then trivially holds that $x_{ra} = rx_{a}$ for all $r \in \Z$ and $x_a+x_b = x_{a+b}$ for all $a, b \in  \Z\mathcal{K}$. Note that this notation is only well-defined when we have $x \in \Delta_{K_d}$. \\

\begin{proof}[Proof of Theorem \ref{mainonantisynchrony}]
We start with invariance under $\gup d = B_{d+1}\fup d B_{d+1}^\intercal$. Given a partition $\mathcal{P} = (P_1, \dots, P_k)$ of $X_{d+1}$, we write $(\fup d(x))_t = g_{i(t)}(x_t)$ if we have $t \in P_{i(t)} \subseteq X_{d+1}$. For $x \in \Delta_{K_d}$ and $e \in X_d$, we see that
\begin{align}\label{bigcalcinvar1}
(B_{d+1}\fup d B_{d+1}^\intercal(x))_e &= \sum_{t \in X_{d+1}} (B_{d+1})_{et}(\fup d B_{d+1}^\intercal(x))_t =  \sum_{t \in X_{d+1}} I_{e}^{t}g_{i(t)}([B_{d+1}^\intercal(x)]_t) \\ \nonumber
 &=  \sum_{t \in X_{d+1}} I_{e}^{t}g_{i(t)}\left(\sum_{e' \in X_{d}}[B_{d+1}^\intercal]_{te'}x_{e'}\right)  =  \sum_{t \in X_{d+1}} I_{e}^{t}g_{i(t)}\left(\sum_{e' \in X_{d}}[B_{d+1}]_{e't}x_{K_d(e')}\right)  \\ \nonumber
 &=  \sum_{t \in X_{d+1}} I_{e}^{t}g_{i(t)}\left(\sum_{e' \in X_{d}}I_{e'}^t x_{K_d(e')}\right) =  \sum_{t \in X_{d+1}} I_{e}^{t}g_{i(t)}\left(x_{\ccup{d}(t)}\right) \, ,
\end{align}
where in the last step we have used our notation $x_a$ for $a \in \Z\mathcal{K}$. We may further rewrite Equation \eqref{bigcalcinvar1} as 
\begin{align}\label{bigcalcinvar3}
 \sum_{t \in X_{d+1}} I_{e}^{t}g_{i(t)}\left(x_{\ccup{d}(t)}\right) =  \sum_{i = 1}^k \sum_{t \in P_i} I_{e}^{t}g_{i}\left(x_{\ccup{d}(t)}\right)  = \sum_{i = 1}^k \sum_{a \in \Z\mathcal{K}} \sum\limits_{\substack{t \in P_i \\ \ccup{d}(t) = a}}  I_{e}^{t}g_{i}\left(x_{a}\right) \, .
\end{align}
Now, since all $g_i$ are odd and therefore satisfy $g_i(0) = 0$,  we may remove $0 \in \Z\mathcal{K}$ from the middle sum in the last expression of \eqref{bigcalcinvar3}. We moreover write $[\Z\mathcal{K}]$ for a full set of representatives of $\Z\mathcal{K}\setminus \{0\}$ under the equivalence relation $a \sim b \Longleftrightarrow a = \pm b$, $a, b \in \Z\mathcal{K}\setminus \{0\}$. This allows us to write 
\begin{align}\label{bigcalcinvar4}
\sum_{i = 1}^k \sum_{a \in \Z\mathcal{K}} \sum\limits_{\substack{t \in P_i \\ \ccup{d}(t) = a}}  I_{e}^{t}g_{i}\left(x_{a}\right) &= \sum_{i = 1}^k \sum_{a \in [\Z\mathcal{K}]} \left( \sum\limits_{\substack{t \in P_i \\ \ccup{d}(t) = a}}  I_{e}^{t}g_{i}\left(x_{a}\right) + \sum\limits_{\substack{t \in P_i \\ \ccup{d}(t) = -a}}  I_{e}^{t}g_{i}\left(x_{-a}\right) \right) \\ \nonumber
&= \sum_{i = 1}^k \sum_{a \in [\Z\mathcal{K}]} \left( \sum\limits_{\substack{t \in P_i \\ \ccup{d}(t) = a}}  I_{e}^{t}g_{i}\left(x_{a}\right) - \sum\limits_{\substack{t \in P_i \\ \ccup{d}(t) = -a}}  I_{e}^{t}g_{i}\left(x_{a}\right) \right) \\ \nonumber
&= \sum_{i = 1}^k \sum_{a \in [\Z\mathcal{K}]} \left( \sum\limits_{\substack{t \in P_i \\ \ccup{d}(t) = a}}  I_{e}^{t} - \sum\limits_{\substack{t \in P_i \\ \ccup{d}(t) = -a}}  I_{e}^{t} \right) g_{i}\left(x_{a}\right) \, .
\end{align}
In conclusion, we find
\begin{align}\label{bigcalcinvar5}
(B_{d+1}\fup d B_{d+1}^\intercal(x))_e = \sum_{i = 1}^k \sum_{a \in [\Z\mathcal{K}]} \left( \sum\limits_{\substack{t \in P_i \\ \ccup{d}(t) = a}}  I_{e}^{t} - \sum\limits_{\substack{t \in P_i \\ \ccup{d}(t) = -a}}  I_{e}^{t} \right) g_{i}\left(x_{a}\right) \, .
\end{align}
Now, the odd functions $g_i$ can all be chosen freely, and since no two elements in $[\Z\mathcal{K}]$ differ by a minus sign, we may freely choose the values of $g_{i}\left(x_{a}\right)$ as well. More precisely, every $x_a$ is a linear combination of the $x_{K_d(e')}$, and so for general $x \in \Delta_{K_d}$ we have $x_a \not= x_b \not= 0$ for all $a \not= b$ appearing in the finite set $\mathcal{X}_d^{\uparrow} = \{\pm \ccup{d}(t)\mid t \in X_{d+1}\} \setminus \{0\}$.  We may construct a map $g_i$ with prescribed values on each $x_a$ for $a \in \mathcal{X}_d^{\uparrow} \cap [\Z\mathcal{K}]$, as well as minus these values on the $-x_a$. We may then make $g_i$ odd by redefining it as $x \mapsto 1/2(g_i(x) - g_i(-x))$. \\
With this freedom, we conclude that for $e,f \in X_d$ we have
\begin{equation}
(B_{d+1}\fup d B_{d+1}^\intercal(x))_e = \pm(B_{d+1}\fup d B_{d+1}^\intercal(x))_f
\end{equation}
for all $x \in \Delta_{K_d}$ and $\fup d  \in \cw {\mathcal{P}}{d+1}$, if and only if 
 \begin{equation}
 \sum\limits_{\substack{t \in P_i \\ \ccup{d}(t) = a}}  I_{e}^{t} - \sum\limits_{\substack{t \in P_i \\ \ccup{d}(t) = -a}}  I_{e}^{t} =  \pm \sum\limits_{\substack{t \in P_i \\ \ccup{d}(t) = a}}  I_{f}^{t}  \mp \sum\limits_{\substack{t \in P_i \\ \ccup{d}(t) = -a}}  I_{f}^{t} \, ,
\end{equation}
for all $i \in \{1, \dots, k\}$ and $a \in [\Z\mathcal{K}]$, and so for all $a \in \Z\mathcal{K}$. This proves the first part of the theorem.

The second part is nearly identical. Let $\mathcal{Q} = (Q_1, \dots, Q_l)$ be a partition of $X_{d-1}$ and write $(\fdown d(x))_v = h_{j(v)}(x_v)$ whenever we have $v \in Q_{j(v)} \subseteq X_{d-1}$ and $x \in C_{d-1}(X)$.  A calculation as before shows that
\begin{align}\label{bigcalcinvar1paart2}
(B_{d}^{\intercal}\fdown d B_{d}(x))_e =  \sum_{v \in X_{d-1}} I_{v}^{e}h_{j(v)}\left(x_{\ccdown{d}(v)}\right) \, ,
\end{align}
for all $x \in \Delta_{K_d}$. This may further be rewritten as 
\begin{align}\label{bigcalcinvar2paart2}
\sum_{v \in X_{d-1}} I_{v}^{e}h_{j(v)}\left(x_{\ccdown{d}(v)}\right) &= \sum_{j= 1}^l \sum_{a \in \Z\mathcal{K}} \sum\limits_{\substack{v \in Q_j \\ \ccdown{d}(v) = a}}  I_{v}^{e}h_{j}\left(x_{a}\right) \\ \nonumber &= \sum_{j= 1}^l \sum_{a \in [\Z\mathcal{K}]} \left( \sum\limits_{\substack{v \in Q_j \\ \ccdown{d}(v) = a}}  I_{v}^{e} - \sum\limits_{\substack{v \in Q_j \\ \ccdown{d}(v) = -a}}  I_{v}^{e} \right) h_{j}\left(x_{a}\right) \, .
\end{align}
We again conclude that
\begin{equation}
(B_{d}^{\intercal}\fdown d B_{d}(x))_e = \pm(B_{d}^{\intercal}\fdown d B_{d}(x))_f
\end{equation}
for all $x \in \Delta_{K_d}$ and $\fdown d  \in \cw {\mathcal{Q}}{d-1}$, if and only if 
 \begin{equation}
 \sum\limits_{\substack{v \in Q_j \\ \ccdown{d}(v) = a}}  I_{v}^{e} - \sum\limits_{\substack{v \in Q_j \\ \ccdown{d}(v) = -a}}  I_{v}^{e} =  \pm \sum\limits_{\substack{v \in Q_j \\ \ccdown{d}(v) = a}}  I_{v}^{f}  \mp \sum\limits_{\substack{v \in Q_j \\ \ccdown{d}(v) = -a}}  I_{v}^{f} 
\end{equation}
for all $j \in \{1, \dots, l\}$ and $a \in \Z\mathcal{K}$. This completes the proof.
\end{proof}

\appendix

\section{Some proofs}\label{app:proofs}

\begin{proof}[Proof of Lemma~\ref{lem:decomp}]
The second and third decomposition of $C_d(X)$ are a result of the rank-nullity theorem. More precisely, we have
$$\dim(\im(B_d^\intercal)) + \dim(\ker(B_d)) = \dim(\im(B_d)) + \dim(\ker(B_d)) = \dim(C_d(X)) \, ,$$
so that 
\begin{align}\label{dimperp1}
\dim(\im(B_d^\intercal)) = \dim(C_d(X)) - \dim(\ker(B_d)) = \dim(\ker(B_d)^\perp)\, .
\end{align}
Now, if we have $x \in \ker(B_d)$ and $y \in \im(B_d^\intercal)$, then we may write $y = B_d^\intercal z$ for some $z \in C_{d-1}(X)$. It follows that
$$\langle x, y\rangle = \langle x, B_d^\intercal z\rangle = \langle B_dx, z\rangle = \langle 0, z\rangle = 0\, .$$
Hence, we have  $\im(B_d^\intercal) \subseteq \ker(B_d)^\perp$ and so by Equation \eqref{dimperp1}, $\im(B_d^\intercal) = \ker(B_d)^\perp$. This shows the orthogonal decomposition
\begin{equation}\label{decompinproof1}C_d(X) =  \im(B_d^\intercal) \oplus \ker(B_d)\, .\end{equation}
Applying the same argument with $B_d$ replaced by $B_{d+1}^\intercal$ gives
\begin{equation}\label{decompinproof2}C_d(X) =  \im(B_{d+1}) \oplus \ker(B_{d+1}^\intercal)\, .\end{equation}
We next focus on the decomposition of $\ker(B_d)$. From $B_dB_{d+1} = 0$ we see that $\im(B_{d+1}) \subseteq \ker(B_d)$. We also have $W_d \subseteq \ker(B_d)$, by definition of the former space. Given $x \in \ker(B_d)$, it follows from Equation \eqref{decompinproof2} that we may write $x = y + z$ for some $y \in  \im(B_{d+1})$ and $z \in \ker(B_{d+1}^\intercal)$. From $\im(B_{d+1}) \subseteq \ker(B_d)$ we conclude that $z = x-y \in \ker(B_d)$, and so $z \in W_d$. This shows that 
$$\ker(B_d) =  \im(B_{d+1})+ W_d\, .$$
From $W_d \subseteq \ker(B_{d+1}^\intercal)$ and Equation \eqref{decompinproof2} we finally conclude that we have the orthogonal decomposition 
\begin{equation}\label{decompinproof3}\ker(B_d)  =  \im(B_{d+1}) \oplus W_d\, .\end{equation}
A similar argument, but with $B_d$ replaced by $B_{d+1}^\intercal$, shows that 
\begin{equation}\label{decompinproof4} \ker(B_{d+1}^\intercal)  = \im(B_d^\intercal) \oplus W_d \, . \end{equation}
Finally, the decomposition 
\begin{equation}\label{decompinproof5} C_d(X) =  \im(B_{d+1}) \oplus \im(B_d^\intercal) \oplus W_d \, , \end{equation}
follows from equations \eqref{decompinproof1} and \eqref{decompinproof3} (and likewise from \eqref{decompinproof2} and \eqref{decompinproof4}).
\end{proof}

\begin{proof}[Proof of Lemma \ref{lem:oplus}]
To show existence of $L^+$, we write
\begin{equation}
S = \im(L)^{\perp} \subseteq W \text{ and } T = \ker(L)^{\perp} \subseteq V\, .
\end{equation}
It follows that any element $w \in W$ may be uniquely written as $w = s + Lt$, with $s \in S$ and $t \in T$. Here we use the fact that $L$ is injective when restricted to $T$. We then set $L^+(s+Lt) = t$, which is readily seen to satisfy the three conditions of the lemma. \\

\noindent To show uniqueness, we first note that trivially $\ker(L^+) \subseteq \ker(LL^+)$. Conversely, if  $LL^+w = 0$ then $L^+w = L^+LL^+w = 0$, so that $w \in \ker(L^+)$. It follows that $\ker(L^+) = \ker(LL^+)$. Similarly we have $\im(LL^+) \subseteq \im(L)$, and for any $w = Lv \in \im(L)$ we have $w = LL^+Lv \in \im(LL^+)$. We conclude that $\im(LL^+) = \im(L)$, and by analogous arguments we find $\ker(L) = \ker(L^+L)$ and $\im(L^+L) = \im(L^+)$. Since $LL^+$ and $L^+L$ are both symmetric, we find the orthogonal decompositions
  \begin{equation*}
    V = \ker (L) \oplus \im (L^+),\quad\quad W = \ker (L^+) \oplus \im (L).
  \end{equation*}
It follows that any map $L^+$ satisfying the three conditions of Definition \ref{moorepenrose} has to vanish on $(\im (L))^{\perp}$.  Given $w = Lv \in \im(L)$, we get  $L^+(w) = L^+L(v)$ and so $L(L^+(w)) = LL^+L(v) = Lv$. As the equation $Lx = y$ has at most one  solution $x \in  \im (L^+) =  (\ker (L))^{\perp}$ for a given $y \in W$, this fully determines $L^+$, which shows uniqueness. \\

\noindent Finally, it is easy to see that $(L^+)^*$ satisfies all the conditions for $(L^*)^{+}$, so that the two indeed agree by uniqueness.

\end{proof}

\section{Exempla bonissima}

\subsection{Guckenheimer--Holmes example}

We want to demonstrate an application of the result in Section~\ref{sec:realization}.  Let us assume that $X$ is the ``all-to-all'' simplicial complex on 4 vertices (basically the ``full tetrahedron''), and let us consider the ``triangle flow''.  Thus the flow we consider is
\begin{equation}\label{eq:triangle2}
  \theta_2 ' = B_2^\intercal \fdown2(B_2 \theta_2),
\end{equation}
where $\fdown2\colon \R^6\to\R^6$ can be whatever we like.  Note that we have 
\begin{equation*}
  B_2 = \begin{pmatrix} 1&1&0&0\\-1&0&1&0\\0&-1&-1&0\\1&0&0&1\\0&1&0&-1\\0&0&1&1\end{pmatrix}.
\end{equation*}
We see from inspection that $B_2$ has rank three and thus there are  three degrees of freedom for this vector field.  
Let us now consider the three-dimensional flow given by the Guckenheimer--Holmes cycle~\cite{guckenheimer1988structurally}, which in coordinates can be written
\begin{align*}
  \dot x &= x\left(\mu - (ax^2 + by^2 + cz^2)\right),\\
  \dot y &= y\left(\mu - (ay^2 + bz^2 + cx^2)\right),\\
  \dot z &= z\left(\mu - (az^2 + bx^2 + cy^2)\right),
\end{align*}
and let us define $\widetilde H\colon \R^3\to\R^3$ as the function given by the right-hand side.  Following the process in Section~\ref{sec:realization}, let us adjoin a single constant flow to this vector field by adding the equation $\dot w = 0$, and thus induces a vector field on $\R^4$.

We now define $M$ in terms of its inverse:
\begin{equation*}
  M^{-1} := \begin{pmatrix}
    1&1&0&1\\1&0&0&-1\\0&0&1&1\\0&1&1&-1
  \end{pmatrix} \implies M = \frac14 \begin{pmatrix}
    1&3&1&-1\\2&-2&-2&2\\-1&1&3&1\\1&-1&1&-1
  \end{pmatrix}.
\end{equation*}
We constructed this by finding three vectors that span $\im B_2^\intercal$ (they are the first, third, and last rows of $B_2$ and clearly independent) and adjoining a vector in the kernel of $B_2$.  From this it follows that $M$ is an isomorphism from $\im B^\intercal \oplus \ker B$ to $\R^3\oplus \R$.

From this, we can define $G(x) = M^{-1}H(Mx)$, and achieve this $G$ by $\fdown 2(x) = A^\intercal G(Ax)$, where $A$ is the pseudoinverse of $B_2$.

We plot an example trajectory of this system in Figure~\ref{fig:GH} with parameters $\mu = 1$, $a = 1$, $b=0.55$, $c=1.5$.  We chose the initial conditions as follows:  we chose a random point in a neighborhood of $(1/2,1/2,1/2)$ and adjoined the value $1/3$ to it, then multiplied by $M^{-1}$ and plotted the trajectory on the left.  We also plot $M$ times the trajectory on the right, and see an expected example of the Guckenheimer--Holmes cycle with a constant component of $1/3$ attached to it.
\begin{figure}[th]
\begin{center}
\includegraphics[width=0.7\textwidth]{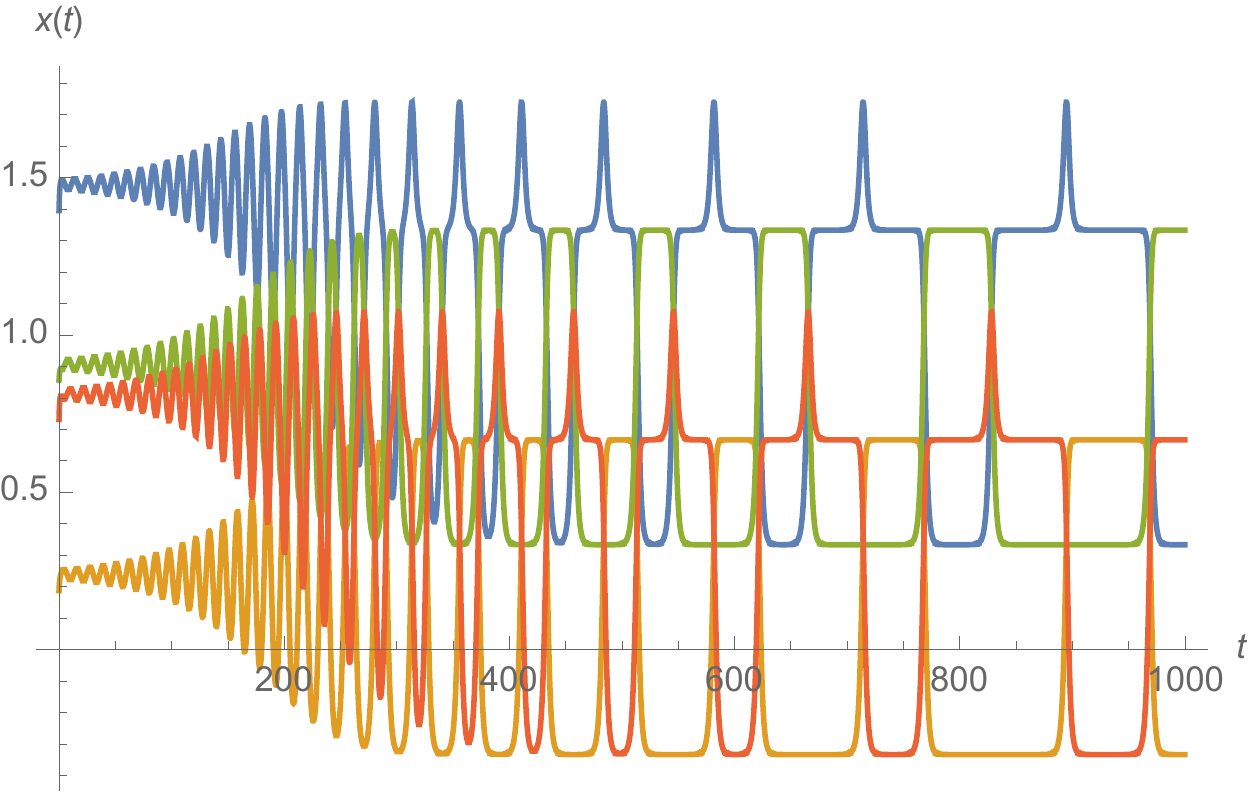}
\includegraphics[width=0.7\textwidth]{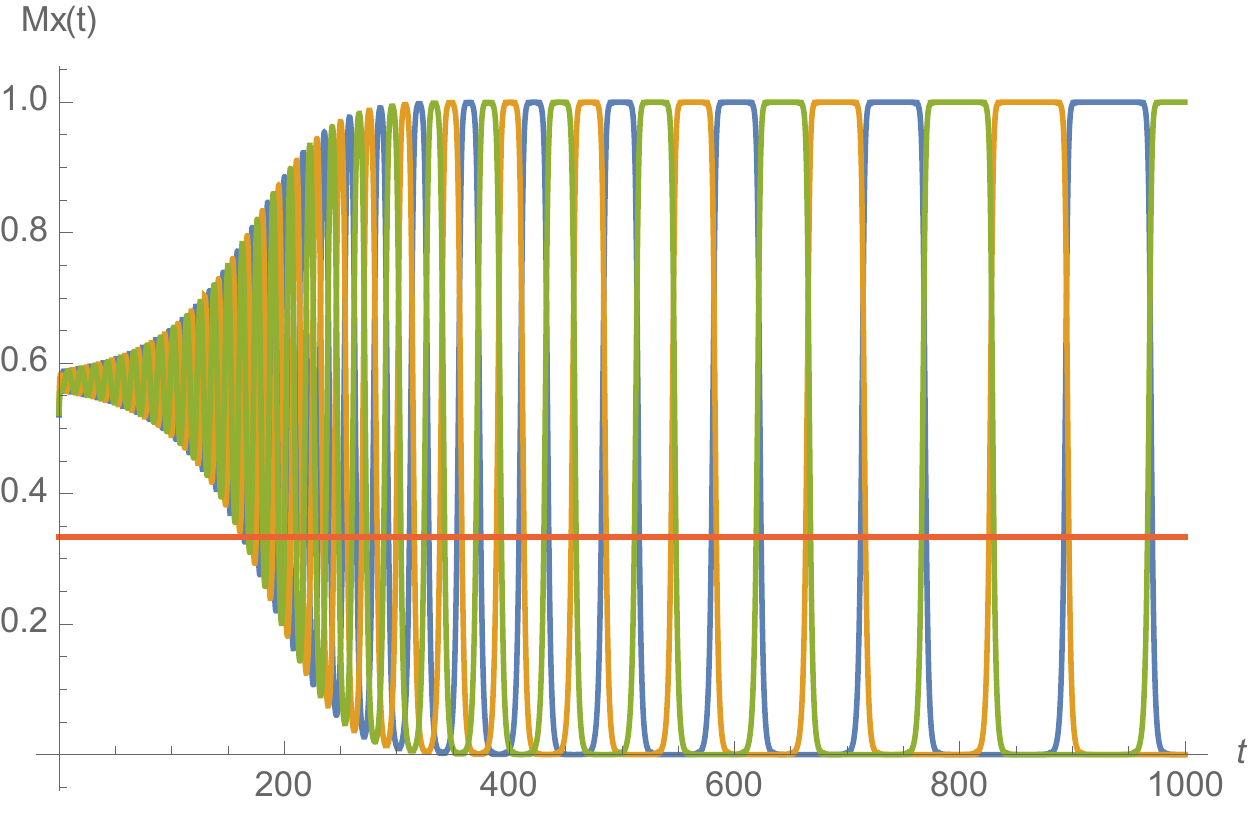}
\caption{An evolution of the system $G(x)$.  On the top we plot the exact simulation, and on the bottom we plot the solution left-multiplied by the matrix $M$.}
\label{fig:GH}
\end{center}
\end{figure}

\subsection{Lorenz--Sel'kov system}

Here we want to give an example of a chaotic system that lives on a simplicial complex.  In particular we will choose a combination of two systems:  the first will be the well-known Lorenz system~\cite{lorenz1963deterministic}, and the second will be the Sel'kov model for glycolysis~\cite{sel1968self} (mostly famous in the math community for being a realistic system that has a Hopf bifurcation, but in this example we choose parameters to get an attracting limit cycle).

\begin{figure}[th]
\begin{center}
\includegraphics[width=0.6\textwidth]{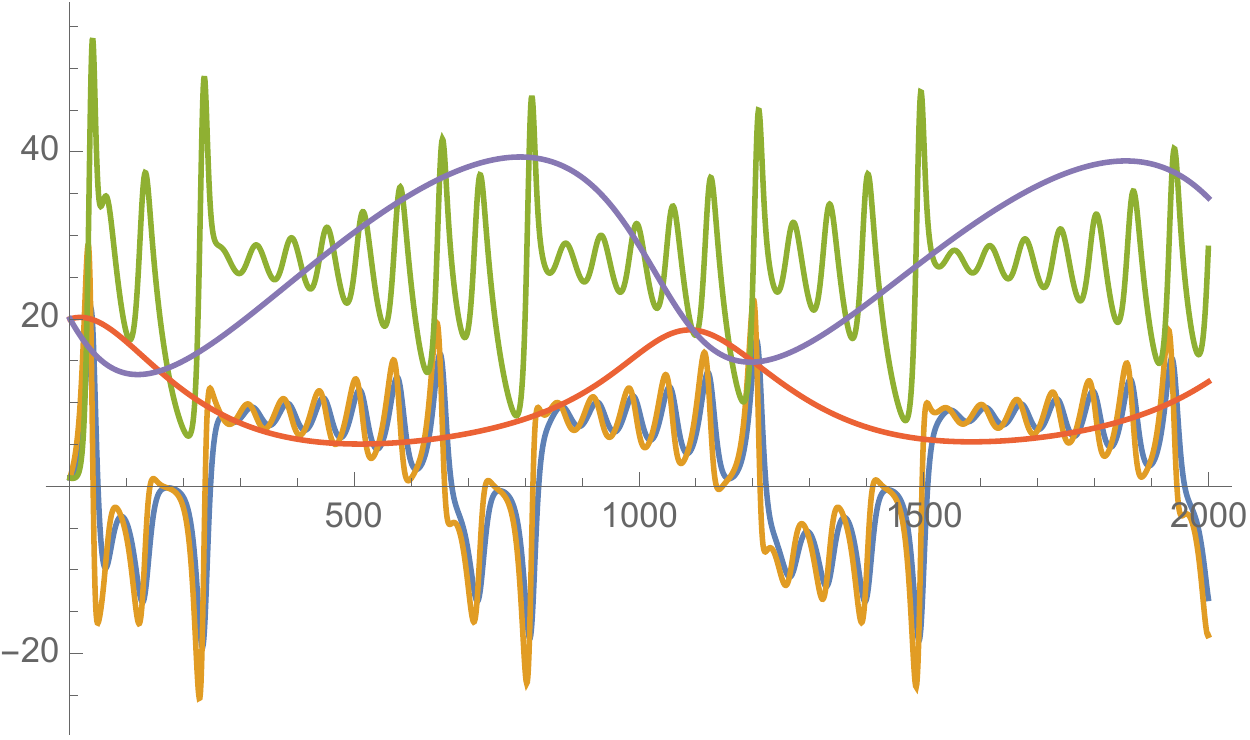}
\includegraphics[width=0.6\textwidth]{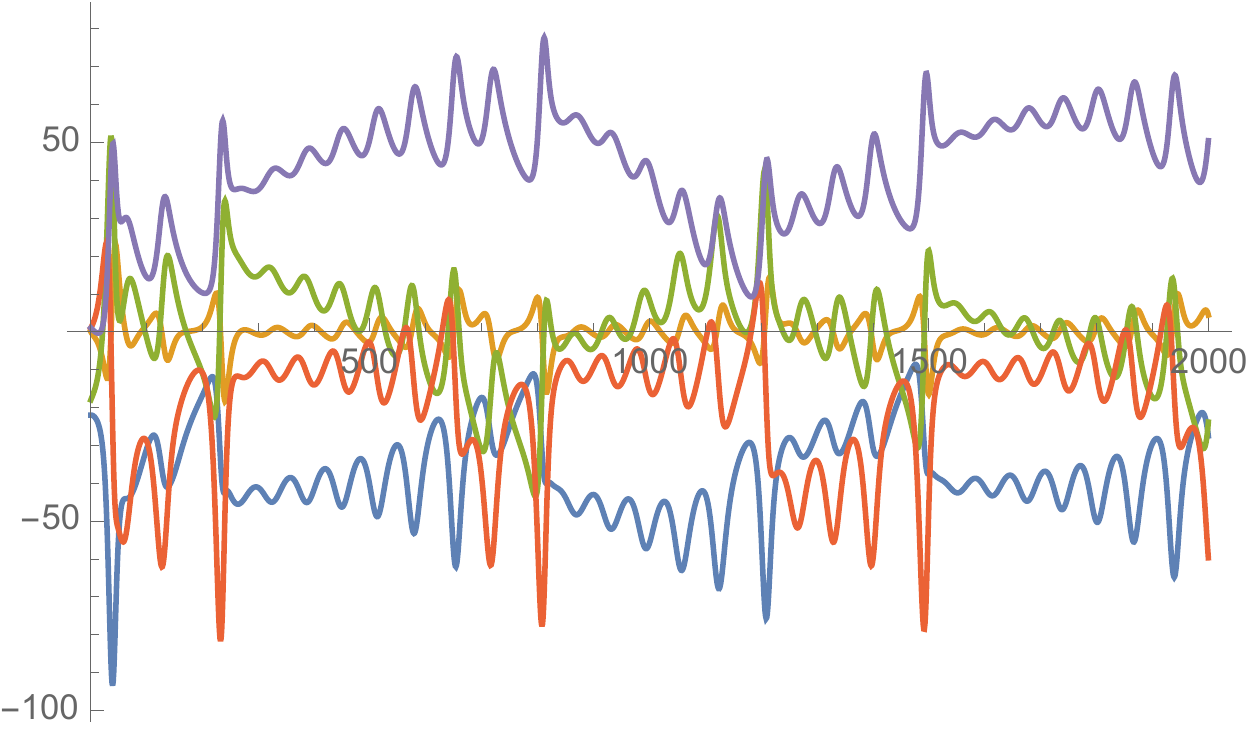}
\end{center}
\caption{One example of the system~\eqref{eq:LS} with $\sigma = 10$, $\beta = 8/3$, $\rho=28$, $a = 0.1$, and $b=1.5$  .}
\label{fig:LS}
\end{figure}

Let us consider the simplicial complex denoted in the left of Figure~\ref{fig_label}, where we always choose orientations to point ``upward'' in the vertex number.  In particular, this gives
\begin{equation*}
  \partial t_1 = e_1 + e_2 - e_3,\quad \partial t_2 = e_3 - e_4 + e_5
\end{equation*}
and
\begin{align*}
  \partial e_1 &= v_2 - v_1,\quad \partial e_2 = v_3 - v_2,\quad \partial e_3 = v_3 - v_1,\\
  \partial e_4 &= v_4 - v_1,\quad \partial e_5 = v_4 - v_3.
\end{align*}

We can compute that
\begin{equation*}
  B_1 = \begin{pmatrix}
    -1&0&-1&-1&0\\1&-1&0&0&0\\0&1&1&0&-1\\0&0&0&1&1
  \end{pmatrix},\quad
  B_2 = \begin{pmatrix}
    1&0\\1&0\\-1&1\\0&-1\\0&1
  \end{pmatrix}
\end{equation*}

It is not hard to see that $B_1$ has rank 3 and $B_2$ has rank 2, so according to the terminology of Definition~\ref{def:abn}, this means that $r_1 = 3$, $r_2 = 2$, and $n_1 = 5$, so that any system on $C_1(X)$ will be a system of type $(3,2,5)$; namely a system on $\R^3$ and an independent system on $\R^2$.

Now let us recall the Lorenz system:
\begin{equation*}
  \frac{d}{dt}\begin{pmatrix}
    x\\y\\z
  \end{pmatrix}= 
  \begin{pmatrix}
    \sigma(y-x)\\x(\rho-z)-y\\xy-\beta x
  \end{pmatrix}
\end{equation*}
and the Selkov model:
\begin{equation*}
  \frac{d}{dt}\begin{pmatrix}
    \xi\\\eta 
  \end{pmatrix}=\begin{pmatrix}
    -\xi + a \eta + \xi^2 \eta/400\\b-a\eta -\xi^2\eta/400
  \end{pmatrix}.
\end{equation*}
The standard presentation of this model is the same as above, but without dividing by the factors of 400 in the nonlinearity.  We did this division so that the amplitude of the limit cycle in the Sel'kov system would be comparable to the amplitude of the Lorenz oscillator with the standard parameters.  We can embed these into $\R^5$ by taking the direct sum of the dynamics:
\begin{equation}\label{eq:LS}
  \frac{d}{dt}\begin{pmatrix}
    x\\y\\z\\\xi\\\eta
  \end{pmatrix}
  = H(x,y,z,\xi,\eta) := \begin{pmatrix}
    \sigma(y-x)\\x(\rho-z)-y\\xy-\beta x\\-\xi + a \eta + \xi^2 \eta/400\\b-a\eta -\xi^2\eta/400
  \end{pmatrix}.
\end{equation}

We now want to construct the isomorphism that sends this system into a flow of the form~\eqref{eq:1}.  We can basically do the following:  the first three rows of $B_1$ are linearly independent, and thus form a basis for the column space of $B_1^\intercal$, and the columns of $B_2$ are linearly independent.  So we want $M^{-1}$ to be the $5\times 5$ matrix that sends the standard basis to these five vectors, and $M$ to be its inverse, so
\begin{equation*}
  M^{-1} = \left(
\begin{array}{ccccc}
 -1 & 0 & -1 & -1 & 0 \\
 1 & -1 & 0 & 0 & 0 \\
 0 & 1 & 1 & 0 & -1 \\
 1 & 1 & -1 & 0 & 0 \\
 0 & 0 & 1 & -1 & 1 \\
\end{array}
\right),\quad M = \frac18 \left(
\begin{array}{ccccc}
 -1 & 4 & 1 & 3 & 1 \\
 -1 & -4 & 1 & 3 & 1 \\
 -2 & 0 & 2 & -2 & 2 \\
 -5 & -4 & -3 & -1 & -3 \\
 -3 & -4 & -5 & 1 & 3 \\
\end{array}
\right)
\end{equation*}

From this, we can define $G(x) = M^{-1}H(Mx)$, and this is a flow on the edge space of this simplex.

We plot the results of a simulation of this system in Figure~\ref{fig:LS}.  We plot the dynamics of $H$ and the dynamics of $G$.  In the $H$ system, the Lorenz oscillator and the limit cycle are completely disjoint, and we can see this in the timeseries.  In the $G$ system (the system on the simplicial complex), these two get mixed up and we get something like a ``breathing'' Lorenz oscillator.
%

\end{document}